\newcommand{\be}{\ensuremath{\mathbf{e}}}
\newcommand{\bx}{\ensuremath{\mathbf{x}}}
\newcommand{\by}{\ensuremath{\mathbf{y}}}
\newcommand{\fra}{\ensuremath{ \mathfrak a  }}
\newcommand{\frc}{\ensuremath{ \mathfrak c  }}
\newcommand{\frg}{\ensuremath{ \mathfrak g  }}
\newcommand{\frh}{\ensuremath{ \mathfrak h  }}
\newcommand{\frk}{\ensuremath{ \mathfrak k  }}
\newcommand{\frl}{\ensuremath{ \mathfrak l  }}
\newcommand{\frm}{\ensuremath{ \mathfrak m  }}
\newcommand{\frn}{\ensuremath{ \mathfrak n  }}
\newcommand{\frp}{\ensuremath{ \mathfrak p  }}
\newcommand{\frq}{\ensuremath{ \mathfrak q  }}
\newcommand{\frs}{\ensuremath{ \mathfrak s  }}
\newcommand{\frt}{\ensuremath{ \mathfrak t  }}
\newcommand{\frX}{\ensuremath{ \mathfrak X  }}
\newcommand{\fgl}{\ensuremath{ \mathfrak g}{\mathfrak l  }}
\newcommand{\fsl}{\ensuremath{ \mathfrak s}{\mathfrak l  }}
\newcommand{\fso}{\ensuremath{ \mathfrak s}{\mathfrak o  }}
\newcommand{\fsp}{\ensuremath{ \mathfrak s}{\mathfrak p  }}
\newcommand{\bC}{\ensuremath{ \mathbb C  }}
\newcommand{\bP}{\ensuremath{ \mathbb P  }}
\newcommand{\bR}{\ensuremath{ \mathbb R  }}
\newcommand{\bZ}{\ensuremath{ \mathbb Z  }}
\DeclareMathOperator{\ad}{ad}
\DeclareMathOperator{\Ad}{Ad}
\DeclareMathOperator{\diag}{diag}
\DeclareMathOperator{\Supp}{Supp}
\DeclareMathOperator{\GL}{\bf GL}
\DeclareMathOperator{\Hom}{Hom}
\DeclareMathOperator{\Ker}{Ker}
\DeclareMathOperator{\rank}{rank}
\DeclareMathOperator{\Span}{Span}
\DeclareMathOperator{\Spin}{\bf Spin}
\DeclareMathOperator{\SL}{\bf SL}
\DeclareMathOperator{\SO}{\bf SO}
\DeclareMathOperator{\Sp}{\bf Sp}
\DeclareMathOperator{\tr}{tr}
\newtheorem{Theorem}{Theorem}[section]
\newtheorem{thm}{Theorem}[section]
\newtheorem{Proposition}[thm]{Proposition}
\newtheorem{Lemma}[thm]{Lemma}
\newtheorem{Definition}[thm]{Definition}
\newtheorem{Remark}[thm]{Remark}
\title[Restricted Roots for Spherical Varieties]{Restricted Roots and Restricted Form of 
\\
Weyl Dimension Formula  for  Spherical Varieties
\\
(Abridged Version)
}
\author{Simon Gindikin and Roe Goodman}  
\subjclass{Primary: 14M27; Secondary: 17B10, 20G20, 22E46}
\keywords{spherical variety, symmetric space, 
restricted root system, Weyl dimension formula} 
\address{
Department of Mathematics\\
Rutgers University\\
110 Frelinghuysen Rd\\ Piscataway NJ 08854-8019\\
 USA
}
\begin{document}

\maketitle

%

\begin{abstract}
We study in this paper the restricted roots for a class of spherical
homogeneous spaces of semisimple groups which includes simply connected
symmetric spaces. For these spaces we give a detailed description (case by
case) of the set of roots of the group associated with each restricted root of
the space (the {\em nest} of the restricted root). As an application, we
obtain a refinement of the Weyl dimension formula in the case of spherical
representations, expressing the dimension as a product over the set of
indivisible positive restricted roots.
\end{abstract}

\section{Introduction}
	\label{intro.sec}

In this paper\footnote {This version contains the statements of the
propositions and lemmas (with proofs omitted) and the root nest calculations
used to obtain the main theorems; see \cite{Gindikin-Goodman} for the complete
paper.} we consider restricted roots for a class of affine spherical
homogeneous spaces $X=G/H$, where $G$ is a semisimple group, $H$ is a
reductive subgroup, and a Borel subgroup of $G$ has an open orbit on $X$. Here
all groups considered are complex linear algebraic groups and all topological
notions refer to the Zariski topology.
We assume that $X$ is ``excellent'' (see Section
\ref{spherepair.sec} for the definition; this class of spherical homogeneous
spaces was introduced in \cite{Vinberg-Gindikin}). All simply-connected symmetric spaces and
all rank-one spaces have this property. When $G$ is simple and $H$ is not the
fixed points of an involution of $G$, we obtain from Kr\"amer's tables
\cite{Kramer} a relatively short list of excellent affine spherical
homogeneous spaces. These spaces exhibit several new phenomena. For example,
by contrast with the case of a symmetric space, the restricted roots are not a
root system in the usual sense. We want to understand how these restricted
roots behave in some problems associated with such spaces. We concentrate in
this paper on constructing a restricted version of Weyl's dimension formula,
and we obtain a refined version of the Plancherel formula.

We recall that Weyl's dimension formula is the product over a system of
positive roots of the group $G$. It is natural to expect that for spherical
representations on $X$ the dimension can be expressed as a product over a
system of positive restricted roots of $X$. However, as far as we know, an
explicit dimension formula of this type has not appeared in print for a
general symmetric space. The ${\bf c}$-function of Harish-Chandra for a
non-compact Riemannian symmetric space has such a product formula, and as a
result the Plancherel density for such a space also has a product formula. For
a symmetric space Vretare \cite{Vretare} and Helgason \cite[Ch. III
\S9.4]{Helgason4} showed that Weyl's dimension formula can be obtained as a
regularization of ${\bf c}$-functions. As a consequence we know that for a
symmetric space a version of Weyl's formula for the dimension of a spherical
representation as a product over restricted roots exists, but the computation
of specific factors corresponding to individual restricted roots is a
substantial work.

There is another approach to this problem which we follow in this paper. To
each restricted root of $X$ we associate a set of roots of $G$ that we call
the ``nest'' of the restricted root. Then in Weyl's formula we combine the
factors from the same nest. We know that in the case of a symmetric space it
is convenient to consider together all scalar multiples of a restricted root,
say $\alpha$ and $2\alpha$, each with a multiplicity. We can associate with
this system an ``atomic'' symmetric space of rank one (which has these roots
and multiplicities). In this paper we show that the spherical dimension
function (in the symmetric case) is a product of some explicit combinatorial
functions, corresponding to the ``atomic'' symmetric spaces of rank one. These
functions for rank one are explicit but not simple. This is similar to the
situation for the rank-one factors in the Plancherel formula for a non-compact
Riemannian symmetric space, which are quite complicated in contrast to the
factors occurring in the ${\bf c}$-function \cite{Gindikin3}.
 
We obtain similar product formulas for non-symmetric excellent affine
spherical homogeneous spaces. The focus here is again on the detailed study of
the nests of restricted roots (and, as a result, of atomic spaces of rank
one). This takes up the major part of the paper. We believe the result can be
useful in other problems, such as the horospherical Cauchy transform
\cite{Gindikin1}, \cite{Gindikin2} (cf. \cite{Goodman}), and that our
dimension formula gives a hint as to how a product formula for the ${\bf
c}$-function for an excellent homogeneous spherical space might look.

There are several new interesting facts that emerge from our investigations.
Some of the atomic spaces of rank one are symmetric spaces of rank one, but
there are two nonsymmetric spherical rank-one spaces (one of them having
restricted roots $\alpha$, $2\alpha$, $3\alpha$). Furthermore, there are some
``virtual'' rank-one spaces: they are not realized as spherical spaces of rank
one, but participate as ``atomic'' spaces in certain excellent affine
spherical homogeneous  spaces of higher rank.

Here is a brief description of the organization of the paper. The main results
concerning dimension formulas are stated in Section \ref{dimformthm.sec}. Some
general results concerning excellent affine spherical spaces and associated
parabolic subgroups are established in Sections \ref{spherepair.sec} and
\ref{parsubgp.sec}.  With these structural properties of excellent affine
spherical pairs established, we turn to the detailed consideration of
restricted roots and the dimension formula in Section \ref{restdim.sec}. We
introduce a principal $\fsl_2$ subalgebra that plays a key role in determining
the shifts in the dimension factors. In the following sections we then work
out all the rank-one cases in detail, followed by the higher-rank
non-symmetric excellent affine spherical homogeneous spaces, and conclude with
the higher-rank symmetric spaces.

\vspace{1ex} 

\noindent{\bf Some Notational Conventions.}

\vspace{1ex}

1. $\bZ_{+}$ is the set of nonnegative integers and $\bC^{\times}$ is the
multiplicative group of the field $\bC$ of complex numbers.

\vspace{.5ex}

2. Denote the $n \times n$ matrix $\bx$ with diagonal entries $x_i \in \bC$
and other entries zero by $\diag [x_1, \ldots, x_n]$. Let $\varepsilon_i$ be
the $i$th coordinate function on the diagonal matrices, so that
$\varepsilon_i(\bx) = x_i$. If $\bx = \diag [x_1, \ldots, x_n]$ and $\by =
\diag [y_1, \ldots, y_n]$ then $\langle \bx \mid \by \rangle = \tr(\bx\by) =
x_1y_1 + \cdots + x_ny_n$.

\vspace{.5ex}

3.  For $\bx = [x_1 , \ldots, x_n ]$ with $x_i \in \bC$ let
  $\check{\bx} = [x_n , \ldots, x_1]$.  

\vspace{.5ex}

4. If $V$ is a complex vector space with dual space $V^{*} = \Hom(V, \bC)$,
then $\langle \cdot \,,\, \cdot \rangle$ denotes the tautological duality
pairing of $V^{*}$ with $V$.

\vspace{.5ex}
 
5. Lie algebras of algebraic groups are denoted by the corresponding German
lower case letters. For an algebraic group $L$ let $\bC[L]$ be the
algebra of regular functions on $L$. Let $\frX(L) = \Hom(L, \bC^{\times})$ be
the character group of $L$ (written additively); the value of $\lambda \in
\frX(L)$ on $y\in L$ will be denoted by $y^{\lambda}$. If $V$ is an $L$
module, then $V^L$ denotes the subspace of $L$-fixed vectors.

\vspace{.5ex}

6. Let $G$ be a semisimple simply-connected algebraic group over $\bC$. Fix
a choice of Borel subgroup $B \subset G$ and a choice of maximal torus $T
\subset B$. Let $U$ be the unipotent radical of $B$. Then $B = TU$ and
$(tu)^{\lambda} = t^{\lambda}$ for $t\in T$, $u \in U$, and $\lambda \in
\frX(B)$, so we may identify $\frX(B)$ with $\frX(T)$.

\vspace{.5ex}

7. The set of dominant weights of $B$ is denoted by $\frX_{+}(B)$. Let
$\varpi_1,\ldots, \varpi_{\ell}$ be the fundamental dominant weights, where
$\ell = \rank(G)$. Let $\lambda = k_1\varpi_1 + \cdots +
k_{\ell}\varpi_{\ell}$ with $k_i \in \bZ_{+}$ be a dominant weight. The {\em
support} of $\lambda$ is the set 
$ \Supp \lambda = \{\varpi_i \,:\, k_i > 0 \}$.

\vspace{.5ex}

8. For each $\lambda \in \frX_{+}(B)$ there is an irreducible
finite-dimensional rational $G$-module $E_{\lambda}$ with highest weight
$\lambda$. The action of $g \in G$ on $\bx \in E_{\lambda}$ is denoted by
$g\cdot \bx$. Write $\lambda^{*}$ for the highest weight of the dual
representation $(E_{\lambda})^*$. Fix a highest weight vector $\be_{\lambda}
\in E_{\lambda}$; thus $b\cdot\be_{\lambda} = b^{\lambda}\,\be_{\lambda}$ for
$b\in B$.

\section{Restricted Weyl Dimension Formula}
       \label{dimformthm.sec}

To state our theorems concerning dimension formulas, we introduce the
following functions. For $t = m/2$ with $m$ a nonnegative integer let
$\varphi(x \,;\, t)$ be the monic polynomial of degree $2t+1$ in $x$ whose
zeros are at $t$, $t-1$, $\ldots$\,, $-t+1$, $-t$. Thus $\varphi(x \,;\, 0) =
x$ and 
\begin{equation}
\label{wdimpoly}
  \varphi(x \,;\, t ) =   (x-t)(x-t+1) \cdots (x+t-1)(x+t) 
\end{equation}
when $t > 0$. This polynomial arises naturally as the characteristic
polynomial for the matrix $\begin{bmatrix}\frac{1}{2} & 0 \\ 0 & -\frac{1}{2}
\end{bmatrix} \in \fsl(2,\bC)$  in the irreducible representation of dimension
$2t+1$.
We extend $\rule{0ex}{2ex} \varphi(x \,;\, t)$ to be a meromorphic function
of $x$ and $t$ by setting $\varphi(x \,;\, t ) = \Gamma(x+t+1)/\Gamma(x-t)$.
Define 
\begin{equation}
\label{weyldimfun}
  \Phi(x,y \,;\, t) = \frac{\varphi(x+y \,;\, t)}{\varphi(y \,;\, t)}\,.
\end{equation}
This is a meromorphic function of $x$, $y$, $t$ that is normalized to
satisfy $\Phi(0, y \,;\, t) = 1$. We write $\Phi(x, y) = \Phi(x,y \,;\, 0) =
(x+y)/y$.

The {\em regular} Weyl dimension functions are defined as follows; here $m$
(the {\em multiplicity parameter}) is a positive integer subject to the
additional conditions indicated. 
\begin{equation}
\label{regdimfact1}
  W(x, y \,;\, m) = 
  \begin{cases} 
     \Phi(x,y) 
     &\quad\text{if $m = 1$\,,}
     \\
     \Phi(x,y)^2 
     &\quad\text{if $m = 2$\,,}
     \\
       \Phi(x,y \,;\, 1) 
        &\quad\text{if $m = 3$\,,}
      \\
      \Phi(x,y)\,\Phi\big(x,y \,;\, \frac{1}{2}m - 1\big) 
        &\quad\text{if $m \geq 4$\,,}
    \end{cases}      
\end{equation}
when there is a single multiplicity parameter, and 
\begin{equation}
\label{regdimfact2}
\left\{
\begin{split}
  W(x, y \,;\, m, 1) &= \Phi(x,y) \, 
 \left\{ 
  \Phi\big(x,y \,;\, \textstyle{\frac{1}{4}m - \frac{1}{2}}\big) \right\}^2 
          \quad\mbox{if $m \geq 2$ is even\,,}
 \\
  W(x, y \,;\, m, 3) &=  
                \frac{ \rule[-1ex]{0ex}{2.5ex}
               \Phi\big(x, y \,;\, \frac{1}{4}m - \frac{1}{2}\big)
               \, \Phi\big(x, y \,;\, \frac{1}{4}m + \frac{1}{2}\big)
          }{
       \rule{0ex}{2.5ex} \Phi\big(x, y \,;\, \frac{1}{2} \big)
            } 
              \,  \Phi(2x, 2y \,;\, 1) 
 \\
    &\hspace{32ex}
        \mbox{if $m \geq 2$  is even\,,}
 \\
   W(x, y \,;\, 8, 7) &= 
         \Phi(x,y)\,      
         \Phi\big(2x,2y \,;\, \textstyle{\frac{3}{2}}\big)\, 
        \Phi\big(2x, 2y \,;\, \textstyle{\frac{9}{2}}\big) \,,
\end{split}
\right.
\end{equation}  
\begin{equation}
\label{regdimfact3}
\left\{
\begin{split}
   W(x, y \,;\, 3, 3) &= \Phi(x,y \,;\, 1)\, \Phi(2x, 2y \,;\, 1) \,,
 \\
  W(x, y \,;\, 2, 1, 2) &= 
       \Phi\big(x,y \,;\, \textstyle{\frac{1}{2}}\big)\, 
       \Phi(2x, 2y)\,
       \Phi\big(3x,3y \,;\, \textstyle{\frac{1}{2}}\big) \,,
\end{split}
\right.
\end{equation}  
when there are two or three multiplicity parameters. These functions of $x$,
$y$ occur in the dimension formulas for rank-one affine spherical spaces,
with the first parameter the multiplicity of the indivisible restricted root
$\xi$. The second and third parameters are the multiplicity of $2\xi$ and
$3\xi$ (when these multiplicities are nonzero). The original Weyl dimension
formula is expressed in terms of the function $W(x, y\,;\, 1) = \Phi(x,y)$,
whereas the functions in (\ref{regdimfact3}) occur for non-symmetric spherical
spaces of rank one.

The {\em singular} Weyl dimension functions are defined as follows.
\begin{equation}
\label{singdimfact}
\left\{
\begin{split}
  W_{\rm sing}(x, y \,;\, m) &= 
     \Phi\big(x,y \,;\, \textstyle{\frac{1}{2}m - \frac{1}{2}}\big) \,,
\\
  W_{\rm sing}(x, y \,;\, m, 1) &= 
     \Phi\big(x,y \,;\, \textstyle{\frac{1}{2}m }\big) 
 \quad\mbox{if $m$ is even} \,.
\end{split}
\right.
\end{equation}
These functions only occur in the dimension formulas for some excellent
non-symmetric spherical homogeneous spaces of rank greater than one.

When a multiplicity parameter is  zero, we omit it from the
notation; thus we write
\[
\begin{split}
  W(x,y \,;\,m, 0) &= W(x,y \,;\, m)\,,
\\
 W(x,y \,;\,  m, n, 0) &=  W(x,y \,;\, m, n)
\quad (n = 1, 3, 7)\,,
\\  
  W_{\rm sing}(x,y \,;\,m, 0) &= W_{\rm sing}(x,y \,;\, m)\,. 
\end{split}
\]
With the indicated restrictions on $m$ all these dimension functions are
polynomials in $x$ and rational functions of $y$. They are normalized to take
the value $1$ when $x = 0$.

Assume that $G/K$ is an irreducible simply-connected symmetric space. Fix a
Cartan subspace $\fra \subset \frp$, where $\frg = \frk + \frp$ is the Cartan
decomposition corresponding to the involution. Then $\fra \subset \frt$ where
$\frt$ is the Lie algebra of a maximal torus of $\frg$, and the roots of
$\frt$ on $\frg$ can be restricted to $\fra$. Fix a set of positive restricted
roots $\Sigma^{+} \subset \fra^{*}$ and let $\Sigma^{+}_0$ be the indivisible
positive roots. For $\xi \in \Sigma^{+}_0$ let $m_{\xi}$ and $m_{2\xi}$ be the
associated root multiplicities, and let
\[
  \delta = \frac{1}{2}\sum_{\xi \in \Sigma^{+}_0} 
    (m_{\xi} + 2m_{2\xi}) \xi \,.
\]
Let $\langle \lambda \mid \xi \rangle$ be the bilinear form on $\fra^{*}$
obtained by duality from the restriction to $\fra$ of a positive multiple of
the Killing form of $\frg$ (the appropriate normalization of the form is
described in Section \ref{restdim.sec}). 

\begin{Theorem}
\label{symspacedim.thm}
The finite-dimensional irreducible $K$-spherical representation of $G$ with
highest weight $\lambda$ has dimension
\begin{equation}
\label{symspacedim}
  d(\lambda) = 
 \prod_{\xi \in \Sigma_{0}^{+}} 
 W(\langle \lambda  \mid \xi \rangle,
   \langle  \delta \mid \xi \rangle \,;\, m_{\xi}\,, m_{2\xi}\,)\,.
\end{equation}
\end{Theorem}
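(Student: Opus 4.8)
The plan is to derive the formula from Weyl's classical dimension formula
\[
  d(\lambda) = \prod_{\alpha \in R^{+}}
     \frac{\langle \lambda + \rho \mid \alpha\rangle}{\langle \rho \mid \alpha\rangle}\,,
\]
where $R^{+}$ is a system of positive roots of $\frt$ on $\frg$ chosen compatibly with $\Sigma^{+}$ and $\rho$ is the half-sum of $R^{+}$, by reorganizing the product according to the restriction map $\alpha \mapsto \alpha|_{\fra}$. First I would use the Cartan--Helgason description of spherical highest weights (see \cite{Helgason4}): since $E_{\lambda}$ is $K$-spherical, $\lambda$ vanishes on $\frt\cap\frk$, and since $\fra$ and $\frt\cap\frk$ are orthogonal for the Killing form this gives $\langle\lambda\mid\alpha\rangle = \langle\lambda\mid\alpha|_{\fra}\rangle$ for every root $\alpha$. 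Hence the factor indexed by $\alpha$ equals $1$ whenever $\alpha|_{\fra} = 0$, i.e.\ whenever $\alpha$ is a root of the centralizer $\frm = Z_{\frk}(\fra)$; all such factors disappear. The surviving roots partition into the \emph{nests} $N_{\xi} = \{\alpha\in R^{+} : \alpha|_{\fra}\in\{\xi, 2\xi\}\}$, $\xi\in\Sigma^{+}_{0}$, so that $d(\lambda) = \prod_{\xi\in\Sigma^{+}_{0}}\ \prod_{\alpha\in N_{\xi}} \langle\lambda+\rho\mid\alpha\rangle / \langle\rho\mid\alpha\rangle$.

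Next I would make the splitting $\rho = \delta + \rho_{0}$ into its $\fra$- and $(\frt\cap\frk)$-components explicit inside each factor. Counting shows that exactly $m_{\xi}$ (resp.\ $m_{2\xi}$) positive roots restrict to $\xi$ (resp.\ $2\xi$), whence $\rho|_{\fra} = \delta$. Writing $\alpha|_{\fra} = k_{\alpha}\,\xi$ with $k_{\alpha}\in\{1,2\}$ and $s_{\alpha} = \langle\rho_{0}\mid\alpha|_{\frt\cap\frk}\rangle$, the factor indexed by $\alpha\in N_{\xi}$ becomes $\frac{k_{\alpha}(x+y) + s_{\alpha}}{k_{\alpha}y + s_{\alpha}}$, where $x = \langle\lambda\mid\xi\rangle$ and $y = \langle\delta\mid\xi\rangle$. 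The theorem thus reduces to the assertion that for each indivisible positive restricted root $\xi$ the multiset $\{(k_{\alpha}, s_{\alpha}) : \alpha\in N_{\xi}\}$ is exactly the one for which this product of linear fractions collapses to $W(x, y\,;\, m_{\xi}, m_{2\xi})$ --- equivalently, that the shifts $s_{\alpha}$ attached to the $\xi$-roots and to the $2\xi$-roots each run through the symmetric consecutive-(half-)integer strings encoded in the definitions (\ref{regdimfact1})--(\ref{regdimfact3}), once the normalization of the form on $\fra^{*}$ fixed in Section \ref{restdim.sec} is in force.

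The substance of the proof is the computation of these nests, and this is where I expect the real obstacle to lie. The structure of $N_{\xi}$ together with the shifts $s_{\alpha}$ depends only on the rank-one ``atomic'' symmetric subpair associated with $\xi$ --- the one built from $\fra$, $\frm$ and the root spaces $\frg_{\pm\xi}$, $\frg_{\pm 2\xi}$ --- so one first reduces to the rank-one case. In the rank-one situation the shifts $\langle\rho_{0}\mid\alpha|_{\frt\cap\frk}\rangle$ are extracted from the action of the principal $\fsl(2,\bC)$-subalgebra introduced in Section \ref{restdim.sec}, which is precisely the device that pins down the integer (or half-integer) by which each factor is shifted; the verification is then carried out case by case over the list of irreducible rank-one symmetric spaces. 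The delicate points are the bookkeeping: distinguishing the strings that yield $\Phi(x,y\,;\,\tfrac{1}{2} m - 1)$ from those that yield $\Phi(x,y\,;\,\tfrac{1}{4} m \pm \tfrac{1}{2})$ and the like, correctly matching the contributions of the $\xi$-roots with those of the $2\xi$-roots, and keeping the normalization of the Killing form on $\fra^{*}$ consistent so that the shifts come out as the zeros of the polynomials $\varphi(\,\cdot\,;\,t)$ of (\ref{wdimpoly}).
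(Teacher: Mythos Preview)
Your framework is exactly the paper's: start from Weyl's formula, discard the $\frm$-roots, partition the survivors into nests, split $\rho_{\frg} = \delta + \rho_{\frm}$ (Lemma~\ref{rhodecomp.lem}), and read the shifts $\langle \rho_{\frm} \mid \alpha\rangle$ off the principal $\fsl_2$ in $\frm'$ acting on $\frn_{\xi}$ (Proposition~\ref{rhoshift.prop}). Your rank-one case-by-case verification is likewise what Section~\ref{rankone.sec} carries out.

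The gap is your assertion that ``the structure of $N_{\xi}$ together with the shifts $s_{\alpha}$ depends only on the rank-one atomic symmetric subpair associated with~$\xi$\dots\ so one first reduces to the rank-one case.'' This is precisely the content of the theorem, not a step toward it. The shifts are eigenvalues of $\tfrac12\ad h_{\frm}^{0}$ (corrected by $\varpi_{\frm}^{0}$ when $\Delta_0$ has two root lengths), where $\frm$ centralizes the \emph{full} Cartan subspace $\fra$ of the ambient higher-rank space; a priori the resulting $\frs$-module structure of $\frn_{\xi}$ --- governed by $k_{\xi}$ and the number of basic roots --- need not match that of any rank-one symmetric space with the same multiplicities, and then the factor would not collapse to $W(x,y\,;\,m_{\xi},m_{2\xi})$. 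That this mismatch never occurs for symmetric pairs is exactly what must be shown; for the non-symmetric excellent pairs of Theorem~\ref{spherepacedim.thm} it \emph{does} occur and produces the $W_{\rm sing}$ factors. The paper does not attempt a clean reduction to rank one: it uses Weyl-group arguments (Lemmas~\ref{regsimple.lem} and~\ref{rootnest.lem}) to pass from arbitrary to \emph{simple} restricted roots, and then verifies case by case across the Cartan classification (Section~\ref{symspaceexam.sec}) that each simple restricted root meets one of the hypotheses of Proposition~\ref{regsimple.prop}. The authors explicitly note after the statement of the theorem that such a case-by-case argument in higher rank appears to be necessary.
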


For rank-one symmetric spaces we prove Theorem \ref{symspacedim.thm} in
Section \ref{rankone.sec} using classification and the
results from Section \ref{restdim.sec}. Helgason's formula for $d(\lambda)$ in
terms of a regularization of ratios of ${\bf c}$-functions \cite[Ch. III
\S9.4]{Helgason4} suggests that the result should then hold in higher rank.
However, in the rank-one case the multiplicity $m_{\xi}$ determines the value
of $\langle \delta \mid \xi \rangle$; this is not true in rank greater than
one, and it seems necessary to do a case-by-case argument for higher rank to
obtain the explicit factors corresponding to each positive restricted root. We
carry this out in Section \ref{symspaceexam.sec} using
techniques similar to those for rank-one symmetric spaces. By Weyl group
symmetry of the restricted root system, however, we only need to consider the
simple restricted roots in most cases. The necessary information about the
restricted roots is summarized in a {\em marked Satake diagram} (the Satake
diagram as in \cite[Ch. X, Table VI]{Helgason1} with additional labels on
certain vertices) and a table of root data in each case. 

\begin{Remark} 
{\em
If $\xi \in \Sigma_{0}^{+}$ then by Cartan's classification of symmetric
spaces the only possible values for $m_{2\xi}$ are $0$, $1$, $3$, and $7$, and
$m_{3\xi} = 0$. Furthermore, when $m_{2\xi} \neq 0$ then $m_{\xi}$ is even.
Thus all the dimension functions in formula (\ref{symspacedim}) are defined in
(\ref{regdimfact1}) and (\ref{regdimfact2}).
}
\end{Remark}

For an irreducible simply-connected excellent spherical space that is not
symmetric there is an analogue of the subspace $\fra$ that was introduced in
\cite{Brion2}, and there is a corresponding set $\Sigma$ of restricted roots
(although this set is not a root system). As a consequence of our dimension
formulas we can separate the indivisible positive roots $\Sigma_0^+$ into {\em
regular} and {\em singular} roots:
\[
   \Sigma_{0}^{+} =  
    \Sigma_{\rm reg}^{+} \cup \Sigma_{\rm sing}^{+}\,.
\]
By definition, an indivisible positive restricted root is called {\em regular}
if its dimension function occurs in a rank-one affine spherical space; these
functions are given in (\ref{regdimfact1}), (\ref{regdimfact2}), and
(\ref{regdimfact3}). Otherwise, the root is called {\em singular}. The
dimension functions for singular roots are given in (\ref{singdimfact}).
With this terminology established, we can state our second main result.

\begin{Theorem}
\label{spherepacedim.thm}
Assume that $G$ is simple and simply-connected, $H$ is reductive and
connected, and $G/H$ is an excellent spherical space that is not symmetric.
The finite-dimensional irreducible $H$-spherical representation of $G$ with
highest weight $\lambda$ has dimension
\begin{equation}
\label{spherespacedim}
\begin{split}
  d(\lambda) &= 
 \prod_{\xi \in \Sigma_{\rm reg}^{+}} 
 W\big(\langle \lambda  \mid \xi \rangle,
 \langle  \delta \mid \xi \rangle \,;\, m_{\xi}\,, m_{2\xi}\,, m_{3\xi}\,\big)
\\
 &\qquad\times
 \prod_{\xi \in \Sigma_{\rm sing}^{+}} 
 W_{\rm sing}\big(\langle \lambda  \mid \xi \rangle,
   \langle  \delta \mid \xi \rangle \,;\, m_{\xi}\,, m_{2\xi}\,\big)\,.
\end{split}
\end{equation}
(If $\rank G/H = 1$ then $\Sigma_{\rm sing}^{+}$ is empty.)
\end{Theorem}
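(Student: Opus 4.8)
The plan is to reduce the problem to a case-by-case verification anchored by a single structural device: the principal $\fsl_2$ subalgebra attached to each restricted root, which (as announced in the introduction) controls the shifts in the dimension factors. First I would recall the classification of irreducible simply-connected excellent non-symmetric spherical spaces $G/H$ with $G$ simple, which follows from Kr\"amer's tables \cite{Kramer}; this produces a finite explicit list. For each such space one has the subspace playing the role of $\fra$ (due to \cite{Brion2}) and the associated set $\Sigma$ of restricted roots, together with the nest $N(\xi) \subset \Delta^+$ of each indivisible positive restricted root $\xi$, computed in the later sections of the full paper. The strategy is: in Weyl's dimension formula $d(\lambda) = \prod_{\alpha \in \Delta^+} \langle \lambda + \rho, \alpha^\vee\rangle / \langle \rho, \alpha^\vee\rangle$, group the factors according to which nest $N(\xi)$ the positive root $\alpha$ belongs to, and show that the product of the grouped factors over $N(\xi)$ equals exactly one of the rank-one functions $W$ or $W_{\mathrm{sing}}$ evaluated at $(\langle \lambda \mid \xi\rangle, \langle \delta \mid \xi\rangle)$ with the appropriate multiplicity parameters.

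The key steps, in order, are as follows. (1) Establish that the nests $\{N(\xi) : \xi \in \Sigma_0^+\}$ partition $\Delta^+$ (or cover it with a controlled, computable overlap); this is the structural input from the earlier sections and the root-nest tables. (2) For a fixed $\xi$, analyze how $\langle \lambda + \rho, \alpha^\vee\rangle$ depends on $\alpha \in N(\xi)$: because $H$-sphericity forces $\lambda$ to restrict to $\fra^*$ in a constrained way and the roots in a nest restrict to multiples $\xi, 2\xi, 3\xi$ of $\xi$, the inner product $\langle \lambda + \rho, \alpha^\vee\rangle$ becomes an affine function of $\langle \lambda \mid \xi\rangle$ whose constant term is governed by $\rho$; here I use the principal $\fsl_2$ inside the Levi-type subalgebra spanned by the nest to identify the spectrum of the relevant semisimple element, which is precisely what makes the product collapse into the $\varphi(x\,;\,t) = \prod (x \pm j)$ shape. (3) Match: by comparing the multiset of shifts $\{t\}$ coming from the principal $\fsl_2$ decomposition of the nest against the definitions (\ref{regdimfact1})--(\ref{regdimfact3}) and (\ref{singdimfact}), identify each grouped product with the correct $W$ or $W_{\mathrm{sing}}$, and check that a root is "regular" exactly when its nest is isomorphic to the nest of a genuine rank-one spherical space (definition of $\Sigma_{\mathrm{reg}}^+$) and "singular" otherwise. (4) Normalization: verify that the Killing-form normalization fixed in Section \ref{restdim.sec} makes $\langle \delta \mid \xi\rangle$ equal the value of $y$ forced by step (2), using $\delta = \rho|_{\fra}$ up to the correction by roots vanishing on $\fra$; also check the value at $\lambda = 0$ gives $1$, which is automatic since each $W$ is normalized. (5) Dispose of the rank-one cases separately (where $\Sigma_{\mathrm{sing}}^+ = \emptyset$ by fiat), either by direct computation as in Section \ref{rankone.sec} or by noting they are the base case of the nest-matching.

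I expect the main obstacle to be step (2)--(3): controlling the constant (the $\rho$-shift, equivalently the spectrum of the principal $\fsl_2$ element) inside each nest and proving it produces \emph{exactly} the zero pattern $t, t-1, \dots, -t$ of $\varphi(x\,;\,t)$ rather than some nearby pattern. Unlike the symmetric case, where $\langle \delta \mid \xi\rangle$ is determined by $m_\xi$ alone, here the shifts must be read off the nest geometry, and the two genuinely non-symmetric rank-one spaces (one with roots $\xi, 2\xi, 3\xi$) plus the "virtual" rank-one atoms have irregular nests whose principal $\fsl_2$ decompositions are not uniform; verifying the collapse there is the delicate combinatorial core. A secondary difficulty is bookkeeping when a nest is not a sub-root-system (since $\Sigma$ is not a root system), so the Weyl-group symmetry argument that shortens the symmetric case is unavailable and one must treat all of $\Sigma_0^+$, not just simple restricted roots. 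Once the nest tables of the later sections are in hand, however, each individual check is finite and reduces to comparing two explicit products of linear factors; the proof is then completed by exhausting the classification list.
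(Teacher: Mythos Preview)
Your proposal is correct and follows essentially the same route as the paper: reduce to Kr\"amer's finite list, group the Weyl dimension formula factors according to the root nests $\Phi^{+}(\xi)$, use the principal $\fsl_2$ triple in $\frm'$ (acting on each $\frn_{\xi}$) to identify the $\rho_{\frm}$-shifts via Proposition~\ref{rhoshift.prop}, and then carry out the case-by-case matching with the functions $W$ and $W_{\rm sing}$ exactly as in Sections~\ref{rankone.sec} and~\ref{highrank.sec}. One small correction: the principal $\fsl_2$ lives in $\frm'$ (the semisimple part of the Levi), not in a subalgebra ``spanned by the nest''; the nest $\frn_{\xi}$ is an $\frm$-module on which the spectrum of $\ad h_{\frm}^{0}$ is read off, and the partition of $\Phi^{+}$ is by $\Psi^{+}$ together with the nests $\Phi^{+}(\xi)$ for all $\xi\in\Sigma^{+}$ (not just $\Sigma_{0}^{+}$), though the final regrouping into indivisible roots is as you describe.
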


We prove Theorem \ref{spherepacedim.thm} in Section \ref{highrank.sec} using
methods similar to those for symmetric spaces. Determination of the root nests
and dimension factors requires more calculation in this case because there is
no Weyl group action on the restricted roots. The necessary information about
the restricted roots in each case is summarized in a marked Satake diagram (as
in the symmetric case) and a table of root data.

\begin{Remark}
{\em
For a singular root $\xi$ the multiplicity of $2\xi$ turns out to be either
zero or one, and the multiplicity of $3\xi$ is zero. Thus the dimension
functions in (\ref{spherespacedim}) are all defined in (\ref{regdimfact1}),
(\ref{regdimfact2}), (\ref{regdimfact3}), and (\ref{singdimfact}), with the
convention that 
}
\[
 W\big(\langle \lambda  \mid \xi \rangle,
 \langle  \delta \mid \xi \rangle \,;\, m_{\xi}\,, m_{2\xi}\,, 0\,\big)
 =
 W\big(\langle \lambda  \mid \xi \rangle,
   \langle  \delta \mid \xi \rangle \,;\, m_{\xi}\,, m_{2\xi}\,\big)\,.
\]
\end{Remark}
 
\begin{Remark}
{\em For non-symmetric excellent affine spherical homogeneous spaces of rank
greater than one, the calculations in Sections \ref{highrank.sec} show that
regular and singular restricted roots can have the same multiplicities. Hence
the dimension functions for these spaces are not completely determined just by
the restricted roots and their multiplicities, unlike the case of rank one
spaces or higher rank symmetric spaces. 
}
\end{Remark}

\section{Spherical Pairs}
		\label{spherepair.sec}


Let $G$ be a simply-connected semi-simple complex algebraic group and $H$ an
algebraic subgroup of $G$. The pair $(G, H)$ (and by extension the homogeneous
space $G/H$ and the subgroup $H \subset G$) is called {\em spherical} if $B$
has an open orbit on the variety $G/H$. The existence of such an orbit implies
that $\dim E_{\lambda}^H \leq 1$ for all $\lambda \in \frX_{+}(B)$. If $G/H$
is quasi-affine (such a subgroup $H$ is called {\em observable}), then the
converse is true \cite[Theorem 1]{Vinberg-Kimelfeld}. Since all Borel
subgroups of $G$ are conjugate, the notion of spherical pair does not depend
on the choice of $B$.

Assume that $(G, H)$ is a spherical pair. If $\lambda \in \frX_{+}(B)$ and
$E_{\lambda^{*}}^H \neq 0$, then $\lambda$ will be called an $H$-{\em
spherical} highest weight and $E_{\lambda}$ an $H$-{\em spherical}
representation (thus $\bC[G/H]$ contains $E_{\lambda}$ as a submodule in this
case). Following \cite{Avdeev1} we let $\Gamma(G/H)$ denote the set of
$H$-spherical highest weights for $G$. Then $\Gamma(G/H)$ is a subsemigroup of
$\frX_{+}(B)$. If $H$ is reductive, then for $\lambda \in \frX_{+}(B)$ we have
$E_{\lambda}^H \neq 0$ if and only if $E_{\lambda^*}^H \neq 0$. Hence
$\Gamma(G/H)$ is invariant under the map $\lambda \mapsto \lambda^{*}$ in this
case.

The following class of spherical pairs was introduced in
\cite{Vinberg-Gindikin} (cf. \cite{Avdeev2}). 

\begin{Definition}
\label{excellent.def}
{\em
The spherical pair $(G, H)$ 
is {\em excellent} if $G/H$ is quasi-affine and
$\Gamma(G/H)$ is generated by 
 $\mu_{1}, \ldots, \mu_{r}$ with
 $\Supp \mu_i \cap \Supp \mu_j = \emptyset$ for $i \neq j$\,.
}
\end{Definition}

When $(G, H)$ is an excellent spherical pair, then the support condition
implies that $\{\mu_{1}, \ldots, \mu_{r} \}$ is linearly independent and
$\Gamma(G/H)$ is a free semigroup. For example, from \cite[Ch. II, Prop.
4.23]{Helgason4} one knows that $(G, H)$ is excellent when $G$ is
simply-connected and $H$ is any symmetric subgroup of $G$ (the fixed-point
group of an involutive automorphism of $G$). Here $G/H$ is affine because $H$
is reductive.

\section{Parabolic Subgroups for Excellent Affine  Spherical Pairs}
	  	\label{parsubgp.sec}

For the rest of the paper we assume that $(G, H)$ is an excellent spherical
pair with $G$ simply connected and simple, $H$ connected and reductive (the
list of such pairs with $H$ not a symmetric subgroup of $G$ is given in
Sections \ref{rankone.sec} and \ref{highrank.sec}). Fix a Borel subgroup
$B$ in $G$. Let $\mu_1, \ldots, \mu_r$ satisfy the conditions of Definition
\ref{excellent.def}. The integer $r$ is the {\em spherical rank} of the pair
$(G, H)$. 

For a vector space $V$ let $\bP(V)$ be the associated projective space, and
denote the canonical map from $V\setminus\{0\}$ to $\bP(V)$ by $\bx \mapsto
[\bx] = \bC^{\times}\cdot\bx$. Define
\[
  P = \{ g \in G \;:\; [ g\cdot {\be}_{\mu_i}] = [{\be}_{\mu_i}]
\mbox{ \ for  $i = 1, \ldots , r$} \}\;. 
\]
Then $P$ is a parabolic subgroup of $G$ since it contains $B$. 

We can describe the structure of $P$ as follows (see, e.g., \cite[\S
30.2]{Humphreys} and \cite{Vinberg-Popov}). Let $\Phi$ be the roots of $T$ on
$\frg$ and let $\Phi^{+}$ be the positive roots determined by the Borel
subgroup $B$. Let $\Delta$ be the simple roots in $\Phi^{+}$. For $\alpha \in
\Phi^{+}$ let $h_{\alpha} \in \frt$ be the coroot to $\alpha$. There is a
unique regular homomorphism $\psi_{\alpha}: \SL(2, \bC) \to G$ whose
differential $d\psi_{\alpha}:\fsl(2,\bC) \to \frg$ satisfies
 \[
   d\psi_{\alpha}\begin{bmatrix} 1 & 0 \\ 0 & -1 \end{bmatrix} = h_{\alpha}\,,
 \quad
   d\psi_{\alpha} \begin{bmatrix} 0 & 1 \\ 0 &0 \end{bmatrix} 
  \in \frg_{\alpha} \,,
 \quad
  d\psi_{\alpha} \begin{bmatrix} 0 & 0 \\ 1 &0 \end{bmatrix} 
  \in \frg_{-\alpha}\,.
 \]  
Write $G^{(\alpha)}$ for the image of $\psi_{\alpha}$; this is a closed
subgroup of $G$. 

Define
\[ 
   \Delta_{0} = 
    \{ \alpha \in \Delta \;:\; \langle \mu_i \,,\, h_{\alpha} \rangle = 0 
    \mbox{ \ for $i = 1, \ldots, r$ } \} \;. 
\]
Thus $\Delta_{0}$ consists of the simple roots $\alpha$ such that $h_{\alpha}$
acts by zero on $\be_{\lambda}$ for all $\lambda \in \Gamma(G/H)$. By the
representation theory of $\SL_2$, one knows that $\alpha \in \Delta_{0}$ if
and only if $G^{(\alpha)}$ fixes $\be_{\lambda}$ for all $\lambda \in
\Gamma(G/H)$.

Viewing the elements of $\Delta_{0}$ as characters of $T$, we define
\[
 C = \Big(\bigcap_{\alpha \in \Delta_{0}} \Ker(\alpha)\Big)^{\circ} \;,
\]
where $K^{\circ}$ denotes the identity component of an algebraic group $K$.
Then $C$ is a subtorus of $T$, and elements of $C$ commute with $G^{(\alpha)}$
for all $\alpha \in \Delta_{0}$. The Lie algebra of $C$ is
\[
  \frc = \  \{ x \in \frt \::\; 
 \langle \alpha \,,\, x \rangle = 0 \mbox{ for all $\alpha \in \Delta_{0}$ } \} \,.
\]

Define

\begin{equation}
\label{levi_factor}
\\
  L = \{ g \in G \;:\;  gc = cg \mbox{ \
   for all $c \in C$} \} \,.
\end{equation}
Then $L$ contains the subgroups $G^{(\alpha)}$ for all $\alpha \in
\Delta_{0}$. Since $C$ is a torus, one knows that $L$ is a connected reductive
group containing $T$, and that $P = LN$ (Levi decomposition), where $N$ is the
unipotent radical of $P$. Furthermore, $C$ is the identity component of the
center of $L$. The Lie algebras of $L$ and $N$ are
\begin{align}
\label{levi}
  \frl =& \ \frc + \sum_{\alpha \in \Delta_{0}} \bC h_{\alpha} 
     + \sum_{\beta \in \Psi} \frg_{\beta} \,,
\\
\label{nilrad}
 \frn =& \ \sum_{\alpha \in \Phi^{+} \setminus \Psi}  \frg_{\alpha} \,,
\end{align}
where $\Psi = \big(\Span \Delta_{0}\big) \cap \Phi$ is the root system
with simple roots $\Delta_{0}$.

Define $M$ to be the subgroup of $L$ that fixes all the highest weight vectors
$\be_{\mu_i}$ for $i = 1, \ldots, r$. Then $M$ is reductive (but not
necessarily connected). Let $M'$ be the commutator subgroup of $M$ 
and let
$   C_0 =  M \cap C $.
The Lie algebras are
\begin{align}
\label{liec0}
 \frc_0 =& \ \{ Y \in \frc \;:\; \langle \mu_i \,,\, Y \rangle = 0
   \mbox{ \  for $i = 1,\ldots, r$}  \} \,,
\\
\label{liem'}
 \frm' =& \   \sum_{\alpha \in \Delta_{0}} \bC h_{\alpha} 
     + \sum_{\beta \in \Psi} \frg_{\beta} \,,
\\
\label{liem}
  \frm =& \  \frc_0 + \frm' \,.
\end{align}

Following \cite{Brion2}, we let $\fra$ be the orthogonal complement to
$\frc_0$ in $\frc$ relative to the Killing form on $\frt$. Since the Killing
form is positive definite on the real span $\frt_{\bR}$ of the coroots, and
since $\frc$ and $\frc_0$ are complexifications of real subspaces of
$\frt_{\bR}$, we have $\frc = \fra \oplus \frc_0$. Thus $\frl = \fra \oplus
\frm$ as a Lie algebra and $\frl' = \frm'$.
Furthermore,  $\frp = \frm \oplus \fra \oplus \frn$ as a vector
space and  
\begin{equation}
\label{nbarman}
  \frg = \frn^{-} \oplus \frm \oplus \fra \oplus \frn 
\quad\mbox{with \quad } 
 \frn^{-} = \sum_{\beta \in \Phi^{+}\setminus \Psi} \frg_{-\beta}\,.
\end{equation} 
 From (\ref{nbarman}) it follows that 
\begin{equation}
\label{zerowtspace}
  \frl = \{ X \in \frg \,:\, [X, \fra ] = 0 \}\;.
\end{equation}

\begin{Lemma}
\label{rankdim.lem}
Let $d = |\bigcup_{i=1}^{r}\Supp \mu_i |$. Then $\dim \frc = d$ and 
$\dim \frc_0 = d - r$. 
Hence $\dim \fra = r$. In particular, if \ 
$\rule{0ex}{2.5ex} |\Supp \mu_i | = 1$ for all $i$,
then $\fra = \frc$.
\end{Lemma}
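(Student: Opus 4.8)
The plan is to compute the three dimensions directly from the defining equations, and then deduce the statement about $\fra$ from the decomposition $\frc = \fra \oplus \frc_0$ already established in the text. First I would handle $\dim\frc$. Recall $\frc = \{x \in \frt : \langle\alpha, x\rangle = 0 \text{ for all }\alpha\in\Delta_0\}$, so $\dim\frc = \ell - \dim(\Span\Delta_0)$ where $\ell = \rank G$. Since $\Delta_0 = \{\alpha\in\Delta : \langle\mu_i, h_\alpha\rangle = 0 \text{ for all }i\}$ and $\Delta$ is a basis of $\frt^*$, the complement of $\Delta_0$ in $\Delta$ consists exactly of those simple coweights $\varpi_j^\vee$ (dually, fundamental weights $\varpi_j$) on which some $\mu_i$ is nonzero, i.e. those $\varpi_j \in \bigcup_i \Supp\mu_i$. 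Hence $|\Delta\setminus\Delta_0| = |\bigcup_i\Supp\mu_i| = d$, giving $\dim(\Span\Delta_0) = \ell - d$ and $\dim\frc = d$.

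Next, $\dim\frc_0$. By (\ref{liec0}), $\frc_0 = \{Y\in\frc : \langle\mu_i, Y\rangle = 0,\ i = 1,\dots, r\}$, so $\frc_0$ is cut out inside the $d$-dimensional space $\frc$ by the $r$ linear functionals $\mu_1|_\frc, \dots, \mu_r|_\frc$. The key point is that these restrictions are linearly independent. Indeed, the excellence hypothesis (Definition \ref{excellent.def}) gives $\Supp\mu_i\cap\Supp\mu_j = \emptyset$ for $i\neq j$, so the $\mu_i$ are supported on disjoint sets of fundamental weights; writing each $\mu_i$ in the basis of fundamental weights, the only way $\sum_i c_i\mu_i$ can vanish on all of $\frc$ is for each $c_i\mu_i$ to vanish on $\frc$ separately, and since $\Supp\mu_i\subset\bigcup\Supp\mu_j$ exactly picks out coordinates dual to the $\varpi_j$ spanning a complement to $\Span\Delta_0$ (equivalently, to $\frc^\perp$ in $\frt^*$), no nonzero multiple of $\mu_i$ annihilates $\frc$. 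Hence the $r$ functionals are independent on $\frc$, and $\dim\frc_0 = d - r$.

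Finally, the text has already shown $\frc = \fra\oplus\frc_0$ as a consequence of positive-definiteness of the Killing form on $\frt_\bR$, so $\dim\fra = \dim\frc - \dim\frc_0 = d - (d - r) = r$. For the last assertion: if $|\Supp\mu_i| = 1$ for every $i$, then the supports being disjoint forces $d = \sum_i|\Supp\mu_i| = r$, whence $\dim\frc_0 = d - r = 0$, so $\frc_0 = 0$ and $\fra = \frc$.

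The main obstacle is the linear-independence step for $\mu_1|_\frc,\dots,\mu_r|_\frc$: one must be careful that disjointness of supports gives independence not just of the $\mu_i$ in $\frt^*$ (which is immediate) but of their restrictions to the subspace $\frc$. The resolution, as sketched above, is that $\frc$ is precisely the annihilator of $\Span\Delta_0$, and the complementary directions $\varpi_j$ with $\varpi_j\in\bigcup\Supp\mu_i$ pair nontrivially with $\frc$; since each $\mu_i$ already has nonzero coefficient on some such $\varpi_j$ and these $\varpi_j$ are distinct across different $i$, no nontrivial combination restricts to zero on $\frc$. Everything else is bookkeeping with the explicit descriptions (\ref{levi}), (\ref{liec0}), (\ref{liem'}), (\ref{liem}) already in hand.
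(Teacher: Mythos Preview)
Your approach is the natural one and is essentially correct; note that this abridged version of the paper omits the proof entirely (deferring to \cite{Gindikin-Goodman}), so there is no in-text argument to compare against directly.

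The only soft spot is the justification that $\mu_1|_\frc,\dots,\mu_r|_\frc$ are linearly independent. You assert that ``the only way $\sum_i c_i\mu_i$ can vanish on all of $\frc$ is for each $c_i\mu_i$ to vanish on $\frc$ separately,'' but this does not follow immediately from disjointness of supports: a functional vanishes on $\frc$ iff it lies in $\Span\Delta_0$, and it is not a priori obvious that a nontrivial combination of fundamental weights $\varpi_j$ with $\alpha_j\notin\Delta_0$ cannot lie in $\Span\Delta_0$. A crisp fix: set $\nu=\sum_i c_i\mu_i$. Since each $\mu_i$ has zero $\varpi_k$-coefficient for every $\alpha_k\in\Delta_0$, we have $\langle\nu,h_{\alpha_k}\rangle=0$ for all $\alpha_k\in\Delta_0$. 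If moreover $\nu\in\Span\Delta_0$, write $\nu=\sum_{\alpha_k\in\Delta_0}b_k\alpha_k$ and pair with each $h_{\alpha_j}$ ($\alpha_j\in\Delta_0$); the resulting homogeneous system has coefficient matrix equal to the Cartan matrix of $\Psi$, which is nonsingular, forcing all $b_k=0$ and hence $\nu=0$. Disjoint supports then give every $c_i=0$. (Equivalently, under the Killing-form identification $\frt\cong\frt^*$ one has $\langle\varpi_j\mid\alpha_k\rangle=0$ for $j\neq k$, so each $\mu_i$ already lies in $(\Span\Delta_0)^\perp\cong\frc$; nondegeneracy of the form on $\frc$ then yields the independence directly.) With this step tightened, your argument goes through.
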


For proof, see \cite{Gindikin-Goodman}

\section{Restricted Roots and Dimension Factors}
		\label{restdim.sec}
%

The spherical subgroup $H$ defines a partition of the root system of $\frg$ as
$\Phi = \Psi \cup (\Phi \setminus \Psi)$, where we recall that $\Psi$ is the
root system of $\frm$ (these are the roots whose restriction to $\fra$ is
zero) and $\Delta_0 = \Psi \cap \Delta$ is a set of simple roots for $\Psi$.
Define the set of {\em restricted roots} $\Sigma$ to be the restrictions of
the roots in $\Phi \setminus \Psi$ to $\fra$.\footnote{
When $H$ is not a symmetric subgroup of $G$, the set $\Sigma$ is usually
not a root system in $\fra_{\bR}^{*}$.}
For $\lambda \in \frt^{*}$ we write $\overline{\lambda}$ for the restriction
of $\lambda$ to $\fra$. 

Let $\Sigma^{+}$ be the set of restrictions to $\fra$ of the
roots in $\Phi^{+} \setminus \Psi$.  For $\xi \in \Sigma^{+}$ define
\begin{equation}
\label{rootnest}
  \Phi^{+}(\xi) = \{ \alpha \in \Phi^{+}\setminus \Psi \,:\, 
    \overline{\alpha} = \xi \}\,.
\end{equation}
We call $\Phi^{+}(\xi)$ the {\em nest} of roots for $\xi$. 
 We define
\[
  \frn_{\xi} = \sum_{\alpha \in \Phi^{+}(\xi)} \frg_{\alpha}
\]
(the $\xi$ eigenspace of $\fra$ in $\frn$). The {\em multiplicity} of $\xi$ is
\[
  m_{\xi} = \dim \frn_{\xi} = \left|\Phi^{+}(\xi)\right|.
\]  
If $\alpha \in \Phi^{+}(\xi)$, $\beta \in \Psi$, and $\alpha + \beta \in
\Phi$, then $\alpha + \beta \in \Phi^{+}(\xi)$. Hence the subspace
$\frn_{\xi}$ is invariant under the adjoint action of $\frm$, and there is a
decomposition
\[ 
    \frn = \bigoplus_{\xi \in \Sigma^{+}} \frn_{\xi} 
\]
as a module relative to the adjoint action of  $\frl = \frm \oplus \fra$.

Let $\langle \, \cdot \mid \cdot \, \rangle$ be a positive multiple of the
Killing form on $\frt$, which we use to identify $\frt$ with $\frt^{*}$ and to
identify $\fra^{*}$ with a subspace of $\frt^{*}$. Then $\Psi \perp \fra^{*}$
and $\fra^{*} = \Span \Gamma(G/H)$.
We normalize this form to make $\langle\, \alpha \mid \alpha \,\rangle = 2$
for $\alpha \in \Delta_0$ when these roots all have the same length. When
$\frm'$ has roots of two lengths, then it follows by the
classification of simple Lie algebras that these roots occur in only one
simple ideal, say $\frq$, of $\frm'$. Thus we can normalize the form so that
when $\frq$ is of type $B$ ({\em resp.} type $C$) then the long roots ({\em
resp.} short roots) in $\Delta_0$ have squared length two.
 Given $\alpha \in \Phi$, we write
\[
   \alpha^\vee = \frac{2}{\langle \, \alpha \mid \alpha \, \rangle}\alpha
\]
for the coroot to $\alpha$.  Define
\[
   \rho_{\frg} = \frac{1}{2} \sum_{\alpha\in \Phi^{+} } \alpha \,,
\qquad
   \rho_{\frm} = \frac{1}{2} \sum_{\beta \in \Psi^{+} } \beta \,,
\qquad
   \delta  = \frac{1}{2} \sum_{\xi \in \Sigma^{+} } 
   m_{\xi} \, \xi\,.
\]
Then $\delta \perp \rho_{\frm}$. Since $\rho_{\frm}$ is the sum of the
fundamental highest weights of $\frm$, we have
\begin{equation}
\label{rhoform}
   \langle \, \rho_{\frm} \mid \alpha^\vee \, \rangle = 1
\quad\mbox{for all $\alpha \in \Delta_0$\,.}   
\end{equation}

\begin{Lemma}
\label{rhodecomp.lem}
There is an orthogonal decomposition
\begin{equation}
\label{rhosum}
  \rho_{\frg} = \delta + \rho_{\frm}\,.
\end{equation}
\end{Lemma}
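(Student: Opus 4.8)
The plan is to show that $\rho_{\frg} - \rho_{\frm}$ has two properties that together force it to equal $\delta$: first, it lies in $\fra^*$ (equivalently, it is orthogonal to $\Psi$, so $\overline{\rho_{\frg} - \rho_{\frm}} = \rho_{\frg} - \rho_{\frm}$); second, its restriction to $\fra$ equals $\delta$. The orthogonality $\delta \perp \rho_{\frm}$ is already asserted in the text (it follows because $\delta \in \fra^*$ and $\Psi \perp \fra^*$), so once both properties are in hand the decomposition $\rho_{\frg} = \delta + \rho_{\frm}$ is orthogonal as claimed.

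First I would split the sum defining $\rho_{\frg}$ according to the partition $\Phi^+ = \Psi^+ \cup (\Phi^+ \setminus \Psi)$ used in Section \ref{restdim.sec}:
\[
  \rho_{\frg} = \frac{1}{2}\sum_{\beta \in \Psi^+} \beta
    + \frac{1}{2}\sum_{\alpha \in \Phi^+ \setminus \Psi} \alpha
  = \rho_{\frm} + \frac{1}{2}\sum_{\alpha \in \Phi^+ \setminus \Psi} \alpha \,.
\]
So it remains to identify $\nu := \frac{1}{2}\sum_{\alpha \in \Phi^+ \setminus \Psi} \alpha$. Restricting to $\fra$ and grouping the roots by their restriction, using the nest decomposition $\Phi^+ \setminus \Psi = \bigsqcup_{\xi \in \Sigma^+} \Phi^+(\xi)$ and $|\Phi^+(\xi)| = m_\xi$, gives immediately
\[
  \overline{\nu} = \frac{1}{2}\sum_{\xi \in \Sigma^+} \Big(\sum_{\alpha \in \Phi^+(\xi)} \overline{\alpha}\Big)
   = \frac{1}{2}\sum_{\xi \in \Sigma^+} m_\xi\, \xi = \delta \,.
\]
Thus $\overline{\rho_{\frg}} = \overline{\rho_{\frm}} + \delta = \delta$, since $\rho_{\frm}$ is a sum of roots in $\Psi$ and hence restricts to $0$ on $\fra$.

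The substantive point — and the step I expect to be the main obstacle — is showing that $\nu$ itself (not just its restriction) lies in $\fra^*$, i.e. $\langle \nu \mid \beta \rangle = 0$ for every $\beta \in \Psi$. Here I would use the $\frm$-invariance of $\frn = \bigoplus_{\xi} \frn_\xi$ established just before the lemma: each $\frn_\xi$, and hence $\frn$ itself, is a module for $\frm' = \frl'$. The weight of the one-dimensional top exterior power $\bigwedge^{\mathrm{top}} \frn$ under $\frt$ is exactly $2\nu = \sum_{\alpha \in \Phi^+\setminus\Psi}\alpha$; since $\frn$ is an $\frm'$-module, this weight is fixed by the Weyl group $W(\Psi)$ of $\frm'$ acting on $\frt^*$, which forces $\langle 2\nu \mid \beta^\vee\rangle = 0$ for all $\beta \in \Delta_0$, hence for all $\beta \in \Psi$. (Equivalently: for $\beta \in \Delta_0$ the reflection $s_\beta$ permutes $\Phi^+ \setminus \{\beta\}$ and fixes $\Psi$ setwise, so it permutes $\Phi^+ \setminus \Psi$, whence $s_\beta(2\nu) = 2\nu$ and $\langle 2\nu \mid \beta^\vee\rangle = 0$.) Therefore $\nu \perp \Psi$, so $\nu \in \fra^*$ and $\nu = \overline{\nu} = \delta$. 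Combining, $\rho_{\frg} = \rho_{\frm} + \nu = \rho_{\frm} + \delta$, and the decomposition is orthogonal because $\delta \in \fra^* \perp \Psi \ni$ (the roots summing to) $2\rho_{\frm}$. Note also that \eqref{rhoform} is consistent with and in fact a special case of this: $\langle \rho_{\frg} \mid \alpha^\vee\rangle = 1$ for $\alpha \in \Delta_0$ by the classical identity, and subtracting the contribution of $\delta$, which is zero, recovers $\langle \rho_{\frm} \mid \alpha^\vee\rangle = 1$.
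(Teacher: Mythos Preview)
Your overall strategy is sound: split $\rho_{\frg}$ along $\Phi^+ = \Psi^+ \sqcup (\Phi^+\setminus\Psi)$, set $\nu = \tfrac12\sum_{\alpha\in\Phi^+\setminus\Psi}\alpha$, compute $\overline{\nu}=\delta$, and then argue that $\nu$ actually lies in $\fra^*$. Your $W(\Psi)$-invariance argument (equivalently, $s_\beta$ for $\beta\in\Delta_0$ permutes $\Phi^+\setminus\Psi$) correctly gives $\langle\nu\mid\beta\rangle=0$ for all $\beta\in\Psi$. The gap is in the next sentence, where you equate ``$\nu\in\fra^*$'' with ``$\nu\perp\Psi$.'' These are \emph{not} equivalent in this paper's setup. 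Since $\Span\Psi=\Span\Delta_0$, the condition $\nu\perp\Psi$ only places $\nu$ in $\frc$ (identified with $\Delta_0^\perp\subset\frt^*$). But the paper's $\fra$ is the orthogonal complement of $\frc_0$ inside $\frc$, so
\[
   \fra^* \;=\; \Span\Gamma(G/H) \;\subsetneq\; \frc \quad\text{whenever }\frc_0\neq 0,
\]
and by Lemma~\ref{rankdim.lem} this happens precisely when some $|\Supp\mu_i|>1$. Concretely, $\frt=\fra\oplus\frc_0\oplus\Span\{h_\alpha:\alpha\in\Delta_0\}$ orthogonally, so ``$\nu\in\fra^*$'' requires both $\nu\perp\Delta_0$ (which you prove) \emph{and} $\nu\perp\frc_0$ (which you do not address). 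Many cases treated in the paper have $\frc_0\neq 0$: type {\bf A\,IV}, type {\bf A\,III}, the non-symmetric pair $(\SL_{p+q},\,\SL_p\times\SL_q)$, types {\bf D\,I} ($r=\ell-1$), {\bf D\,III} ($\ell$ odd), {\bf E\,II}, {\bf E\,III}. In those cases your argument, as written, does not conclude.

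To close the gap you must show $\langle\nu\mid Y\rangle=0$ for every $Y\in\frc_0$; equivalently (since $\rho_{\frm}\in\Span\Delta_0\perp\frc$), that $\rho_{\frg}\perp\frc_0$. One way that works uniformly for the pairs in this paper is to use the diagram involution $-w_0$: since $H$ is reductive, $\lambda\mapsto\lambda^*=-w_0\lambda$ preserves $\Gamma(G/H)$, hence permutes the free generators $\mu_i$ and therefore preserves $\fra^*=\Span\{\mu_i\}$, $\Delta_0$, and $\frc_0$. One checks (and this is where the ``excellent'' hypothesis and the disjoint-support condition are used) that whenever $|\Supp\mu_i|=2$ the two fundamental weights in $\Supp\mu_i$ are interchanged by $-w_0$, so $-w_0$ acts as $-\Id$ on $\frc_0$. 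Since $-w_0$ permutes $\Phi^+\setminus\Psi$ (it preserves both $\Phi^+$ and $\Psi$), $\nu$ is $-w_0$-fixed, and hence its $\frc_0$-component vanishes. (The paper itself omits the proof in this abridged version and refers to \cite{Gindikin-Goodman}, so I cannot compare directly, but some argument of this additional kind is genuinely needed.)
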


For proof see \cite{Gindikin-Goodman}.

\vspace{1ex}

Let $\lambda \in \Gamma(G/H)$. For $\xi \in \Sigma^{+}$ define 
\[
   d_{\xi}(\lambda)  = 
   \prod_{\alpha \in \Phi^{+}(\xi)} 
   \frac{\langle \, \lambda + \rho_{\frg} \mid \alpha \, \rangle}
   {\langle \, \rho_{\frg} \mid \alpha \, \rangle}  \,.
\]
Let $d(\lambda)$ be the dimension of the irreducible $G$-module $E_{\lambda}$.
Then the Weyl dimension formula gives 
\begin{equation}
\label{restdim} 
  d(\lambda) =  \prod_{\xi \in \Sigma^{+}} d_{\xi}(\lambda)\,.
\end{equation} 
For $\alpha \in \Phi^{+}(\xi)$ we have 
$\langle\, \lambda \mid \alpha\,\rangle = 
    \langle\,\lambda \mid \xi \,\rangle$.
Hence by (\ref{rhosum})  we can write
\begin{equation}
\label{weyldimfact}
   d_{\xi}(\lambda)  =
   \prod_{\alpha \in \Phi^{+}(\xi)} 
     \frac{ \langle\, \lambda + \delta \mid  \xi \,\rangle
     +  \langle\,  \rho_{\frm} \mid  \alpha \,\rangle }
     { \langle\,  \delta \mid  \xi \,\rangle
     +  \langle\,  \rho_{\frm} \mid  \alpha \,\rangle }  \,.
\end{equation}

We now determine the shifts $\langle\, \rho_{\frm} \mid \alpha \,\rangle$ in
formula (\ref{weyldimfact}) using the representations of $\fsl_2$. Let
$h_{\frm}^0$ be the element of 
$\Span \{ h_{\alpha} \,:\, \alpha \in \Delta_0\}$ such that
\[
  \langle\, h_{\frm}^0 \mid \alpha \,\rangle = 2
\quad\mbox{for all $\alpha \in \Delta_0$\,.}
\]
Then $h_{\frm}^0$ is a regular element in $\frm'$, and there exist elements
$e_{\frm}^0$ and $f_{\frm}^0$ in $\frm'$ such that 
$\rule{0ex}{2.5ex} \{e_{\frm}^0\,,\, f_{\frm}^0 \,,\, h_{\frm}^0
\}$ 
is a principal $\fsl_2$ triple in $\frm'$ (see \cite{Kostant} and \cite[Ch.
VIII, \S11]{Bourbaki4}). Denote the span of these elements by $\frs$ and let
$S \subset M$ be the connected subgroup with Lie algebra $\frs$.

Suppose $g \in G$ and $\Ad(g)\fra = \fra$. If $\xi$ is a restricted root, then
$g \cdot \xi$ is also a restricted root, where $g\cdot \xi \in \fra^*$ is
defined by $\langle g\cdot \xi \mid x \rangle = \langle \xi \mid \Ad(g)^{-1}x
\rangle$ for $x\in \fra$.

\begin{Lemma}
\label{principaltds.lem}
If $g \in G$ and $\Ad(g)$ preserves $\fra$, then the restricted
root spaces $\frn_{\xi}$ and $\frn_{g\cdot\xi}$ are isomorphic as
$\frs$-modules.
\end{Lemma}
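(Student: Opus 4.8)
The plan is to find an element $g_{2}$ for which $\Ad(g_{2})$ not only preserves $\fra$ and induces the same permutation $\xi \mapsto g_{2}\cdot\xi = g\cdot\xi$ of the restricted roots, but in addition fixes the principal $\fsl_2$-triple $\frs$ pointwise. Once that is arranged the lemma is immediate, and the corrections turning $g$ into $g_{2}$ will be made by elements of $(M')^{\circ}$, whose adjoint action on $\fra$ is trivial.

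First I would record the invariance properties of $\Ad(g)$. Since the Killing form is $\Ad(g)$-invariant and $\Ad(g)\fra = \fra$, and since $\frl = \{X \in \frg : [X,\fra] = 0\}$ by (\ref{zerowtspace}), $\Ad(g)$ preserves $\frl$, hence also its derived subalgebra $\frl' = \frm'$. A direct computation from the definition $\langle g\cdot\xi \mid x \rangle = \langle \xi \mid \Ad(g)^{-1}x\rangle$ shows that $\Ad(g)$ carries the $\xi$-eigenspace of $\ad(\fra)$ on $\frg$ onto the $(g\cdot\xi)$-eigenspace; for $\xi \in \Sigma^{+}$ that eigenspace is $\frn_{\xi}$, and we read $\frn_{g\cdot\xi}$ off the same description (recall $g\cdot\xi$ is again a restricted root). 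Finally, $\fra$ lies in the center $\frc$ of $\frl$, so $[\frm',\fra] = 0$; hence every element of the connected subgroup $(M')^{\circ}$ of $G$ with Lie algebra $\frm'$ --- in particular every element of $S \subset (M')^{\circ}$ --- acts trivially on $\fra$.

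Next I would perform the two adjustments. The map $\Ad(g)|_{\frm'}$ is an automorphism of the semisimple algebra $\frm'$, and since automorphisms preserve regularity it sends the regular nilpotent $e_{\frm}^{0}$ to a regular nilpotent; thus it carries $\frs$ to an $\fsl_2$-triple of $\frm'$ whose nilpositive element is again regular, i.e.\ to another principal triple. By Kostant's conjugacy theorem for principal $\fsl_2$-triples (\cite{Kostant}; see also \cite[Ch. VIII, \S11]{Bourbaki4}) there is $y \in (M')^{\circ}$ with $\Ad(y)\Ad(g)\frs = \frs$. Put $g_{1} = yg$; then $\Ad(g_{1})\fra = \fra$ and $g_{1}\cdot\xi = g\cdot\xi$ for all $\xi$. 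Now $\Ad(g_{1})|_{\frs}$ is an automorphism of $\frs \cong \fsl(2,\bC)$, and since $\fsl(2,\bC)$ has no outer automorphisms $\Ad(S)$ exhausts $\Aut(\frs)$; choose $s \in S$ with $\Ad(g_{1})|_{\frs} = \Ad(s)|_{\frs}$ and set $g_{2} = s^{-1}g_{1}$. Then $\Ad(g_{2})$ fixes $\frs$ pointwise, and because $s \in (M')^{\circ}$ we still have $\Ad(g_{2})\fra = \fra$ and $g_{2}\cdot\xi = g\cdot\xi$.

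It remains to conclude: $\Ad(g_{2})$ maps $\frn_{\xi}$ bijectively onto $\frn_{g_{2}\cdot\xi} = \frn_{g\cdot\xi}$, and for $X \in \frs$ and $v \in \frn_{\xi}$ we have $\Ad(g_{2})[X,v] = [\Ad(g_{2})X, \Ad(g_{2})v] = [X, \Ad(g_{2})v]$ since $\Ad(g_{2})$ fixes $X$; hence $\Ad(g_{2}) \colon \frn_{\xi} \to \frn_{g\cdot\xi}$ is an isomorphism of $\frs$-modules. I expect the delicate point to be the third paragraph --- namely that $\Ad(g)$ sends the principal triple $\frs$ to a \emph{principal} triple (so that Kostant's theorem applies) and that the resulting ambiguity can be removed by inner automorphisms coming from $(M')^{\circ}$, which act trivially on $\fra$; this is exactly what keeps the corrections from disturbing the action on restricted roots. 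The rest is bookkeeping.
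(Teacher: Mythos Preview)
The paper you are working from is the abridged version and does not include a proof of this lemma; it simply refers to the full paper \cite{Gindikin-Goodman}. So a direct comparison with ``the paper's own proof'' is not possible here.

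That said, your argument is correct and is the natural one. The essential steps are all in order: $\Ad(g)$ preserves $\frl$ (as the centralizer of $\fra$) and hence $\frm' = \frl'$; it carries the principal triple $\frs$ to another principal triple in $\frm'$ because automorphisms preserve regularity of nilpotents; Kostant's conjugacy theorem then furnishes $y \in (M')^{\circ}$ bringing $\Ad(g)\frs$ back to $\frs$; and the residual automorphism of $\frs \cong \fsl_2(\bC)$ is inner, hence realized by some $s \in S \subset (M')^{\circ}$. The point you correctly identify as delicate --- that both corrections come from $(M')^{\circ}$, which centralizes $\fra$ and therefore does not disturb the action on restricted roots --- is precisely what makes the argument work. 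This is almost certainly the argument intended in the full version as well, since the paper explicitly sets up the principal $\fsl_2$ and cites \cite{Kostant} and \cite[Ch.~VIII, \S11]{Bourbaki4} immediately before stating the lemma.
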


For proof see \cite{Gindikin-Goodman}.

\vspace{1ex}

Let $\xi \in \Sigma^{+}$ be a restricted positive root. Define
\[
   k_{\xi} = \max \{ k \,:\, 
    \mbox{ $k$ is an eigenvalue of $\ad  h_{\frm}^0$ on $\frn_{\xi}$ } \}\,. 
\]   
By the representation theory of $\fsl_2$ we know that $k_{\xi}$ is a
non-negative integer and 
\begin{equation}
\label{kxi}
  k_{\xi} = - \min \{ \langle\, h_{\frm}^0 \mid \alpha \,\rangle \,:\, 
    \alpha \in  \Phi^{+}(\xi)\}\,.
\end{equation}
\begin{Definition}
{\em
A root $\alpha \in \Phi^{+}(\xi)$ is a {\em basic root} if $\alpha$ gives the
minimum value in (\ref{kxi}).
}
\end{Definition}

Let $\Phi(x,y \,;\, t)$ be the function defined in (\ref{weyldimfun}); recall
that we write $\Phi(x, y) = \Phi(x, y \,;\, 0)$.

\begin{Proposition}
\label{rhoshift.prop}  
The eigenvalues of $\ad h_{\frm}^0$ on $\frn_{\xi}$ are integers between
$-k_{\xi}$ and $k_{\xi}$. They include $-k_{\xi}$\,,\,$-k_{\xi} +
2$\,,\,$\ldots$\,,\,$k_{\xi} - 2$\,,\,$k_{\xi}$. In particular, $\dim
\frn_{\xi} \geq k_{\xi} + 1$.

Let $b = \frac{1}{2}k_{\xi}$, let $\lambda \in \Gamma(G/H)$, and assume all
roots in $\Delta_0$ have the same length. Then the following hold.
\begin{enumerate}

\vspace{.5ex}
 
\item
The shifts $\langle\, \rho_{\frm} \mid \alpha \,\rangle$ in {\rm
(\ref{weyldimfact})} are the eigenvalues (with multiplicities) of
\,$\frac{1}{2}\ad h_{\frm}^0$\, on $\frn_{\xi}$. In particular, the shifts
include  $-b$, $-b+1$, $\ldots$\,, $b-1$, $b$.

\vspace{1ex}

\item
Suppose \ $b = 0$\,. Then 
\ $
  d_{\xi}(\lambda) =
  \left[\Phi(\langle\, \lambda \mid \xi \,\rangle \,,\,
  \langle\, \delta \mid \xi \,\rangle ) \right]^{m_{\xi}}
  $\,.

\vspace{1ex}

\item
Suppose $b > 0$\,, \ $m_\xi = (2b + 1)p$ \ for some integer $p \geq 1$,
and $\Phi^{+}(\xi)$ has $p$ basic roots. Then \ 
$
  d_{\xi}(\lambda) = \left[
  \Phi(\langle\, \lambda  \mid \xi \,\rangle \,,\,
   \langle\, \delta \mid \xi \,\rangle \,;\, b)
  \right]^p \,.
$
\item
Suppose $b > 0$ and  $m_\xi = 2b + 2$. Then
\[
  d_{\xi}(\lambda) = \Phi(\langle\, \lambda  \mid \xi \,\rangle \,,\,
    \langle\,  \delta \mid \xi \,\rangle) \,
   \Phi( \langle\, \lambda \mid \xi \,\rangle \,,\, 
     \langle\,  \delta \mid \xi \,\rangle \,;\, b )  \,.
\]
\end{enumerate}

\end{Proposition}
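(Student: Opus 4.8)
The entire proposition is really a statement about a single $\fsl_2$-triple, namely the principal triple $\{e_{\frm}^0, f_{\frm}^0, h_{\frm}^0\} = \frs \subset \frm'$ acting on the space $\frn_{\xi}$, combined with the elementary arithmetic identity relating a ratio of linear factors to the shifted polynomial $\varphi(x\,;\,t)$. The plan is first to settle the structural claim about the eigenvalues of $\ad h_{\frm}^0$ on $\frn_{\xi}$, then to translate the shifts $\langle \rho_{\frm} \mid \alpha \rangle$ into those eigenvalues, and finally to do the three separate combinatorial computations in (2), (3), (4).

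First I would establish the opening assertion. The space $\frn_{\xi} = \bigoplus_{\alpha \in \Phi^{+}(\xi)} \frg_{\alpha}$ is an $\frm$-submodule of $\frn$ (this was noted in the text just after the definition of the nest), hence in particular an $\frs$-module. By $\fsl_2$ representation theory, $\ad h_{\frm}^0$ acts semisimply with integer eigenvalues on any finite-dimensional $\frs$-module, and the eigenvalue $k_{\xi}$ (the maximum, by definition) is the highest weight of some irreducible summand; that summand contributes the string $-k_{\xi}, -k_{\xi}+2, \ldots, k_{\xi}-2, k_{\xi}$. This gives all the claims of the first paragraph, including $\dim \frn_{\xi} = m_{\xi} \geq k_{\xi}+1$. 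For part (1): since all roots in $\Delta_0$ have the same length, the normalization $\langle \alpha \mid \alpha \rangle = 2$ gives $\langle \rho_{\frm} \mid \alpha^{\vee}\rangle = \langle \rho_{\frm} \mid \alpha \rangle = 1$ for $\alpha \in \Delta_0$ by (\ref{rhoform}), so $2\rho_{\frm}$ and $h_{\frm}^0$ take the same values on $\Delta_0$; since both lie in $\Span\{h_{\alpha} : \alpha \in \Delta_0\}$ (for $h_{\frm}^0$ this is its definition; for $2\rho_{\frm}$, use that $\rho_{\frm} = \sum \varpi_i^{\frm}$ pairs correctly with the coroots and the form is nondegenerate on that span), they are equal. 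Therefore $\langle \rho_{\frm} \mid \alpha \rangle = \tfrac12 \langle h_{\frm}^0 \mid \alpha \rangle$ is exactly $\tfrac12$ times the $\ad h_{\frm}^0$-eigenvalue on $\frg_{\alpha} \subset \frn_{\xi}$, proving (1); the inclusion of $-b, -b+1, \ldots, b$ among the shifts follows from the eigenvalue string already found.

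With (1) in hand, the remaining parts are pure computation in formula (\ref{weyldimfact}): $d_{\xi}(\lambda)$ is the product over $\alpha \in \Phi^{+}(\xi)$ of $\big(\langle \lambda+\delta \mid \xi\rangle + s_{\alpha}\big)/\big(\langle \delta \mid \xi\rangle + s_{\alpha}\big)$, where $s_{\alpha} = \langle \rho_{\frm}\mid\alpha\rangle$ ranges (with multiplicity) over the $\tfrac12\ad h_{\frm}^0$-eigenvalues on $\frn_{\xi}$. For (2), $b = 0$ means $k_{\xi}=0$, so every $s_{\alpha}=0$ and every factor is $(\langle\lambda\mid\xi\rangle + \langle\delta\mid\xi\rangle)/\langle\delta\mid\xi\rangle = \Phi(\langle\lambda\mid\xi\rangle, \langle\delta\mid\xi\rangle)$; there are $m_{\xi}$ of them. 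For (3), $k_{\xi}=2b$ with $m_{\xi}=(2b+1)p$ and $p$ basic roots: by the semisimplicity and the fact that the minimal eigenvalue $-2b$ occurs with multiplicity $p$ (this is what "$p$ basic roots" encodes), $\frn_{\xi}$ is a direct sum of $p$ copies of the irreducible of highest weight $2b$, so the multiset $\{s_{\alpha}\}$ is $p$ copies of $\{-b, -b+1, \ldots, b\}$; then $\prod_{j=-b}^{b}\frac{\langle\lambda\mid\xi\rangle + \langle\delta\mid\xi\rangle + j}{\langle\delta\mid\xi\rangle + j} = \varphi(\langle\lambda\mid\xi\rangle+\langle\delta\mid\xi\rangle\,;\,b)/\varphi(\langle\delta\mid\xi\rangle\,;\,b) = \Phi(\langle\lambda\mid\xi\rangle,\langle\delta\mid\xi\rangle\,;\,b)$ by the very definition (\ref{wdimpoly})--(\ref{weyldimfun}), and raising to the $p$-th power gives the claim. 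For (4), $m_{\xi}=2b+2 = (2b+1)+1$: the only way to assemble a $(2b+2)$-dimensional $\frs$-module with highest weight $2b$ (which is forced, since $k_{\xi}=2b$ and $\dim\frn_{\xi} < 2(2b+1)$ rules out two copies of the top string, and an irreducible of highest weight $2b+1$ would force an odd eigenvalue, contradicting that the nonzero-weight-string alternates by $2$ starting from $\pm 2b$) is as the irreducible of highest weight $2b$ plus the trivial module; hence $\{s_{\alpha}\}$ is $\{-b,\ldots,b\}$ together with one extra $0$, and $d_{\xi}(\lambda) = \Phi(\langle\lambda\mid\xi\rangle,\langle\delta\mid\xi\rangle)\,\Phi(\langle\lambda\mid\xi\rangle,\langle\delta\mid\xi\rangle\,;\,b)$.

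The main obstacle is the bookkeeping in parts (3) and (4): identifying the precise multiset of eigenvalues of $\ad h_{\frm}^0$ on $\frn_{\xi}$ from the combinatorial hypotheses (the count $m_{\xi}$ and the number of basic roots), i.e. pinning down the $\frs$-module structure of $\frn_{\xi}$ uniquely. One must rule out mixed decompositions — for instance in (3), that $(2b+1)p$ together with "$p$ basic roots" cannot be realized by irreducibles of differing highest weights — which uses that the bottom weight $-k_{\xi}$ of each irreducible summand equals $-k_{\xi}$ only for summands of top weight $k_{\xi}$, so the basic-root count is exactly the number of top-string summands, and the dimension count then forces all summands to be top strings. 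Once the module structure is fixed, everything reduces to the identity $\prod_{j=-b}^{b}(z+j) = \varphi(z\,;\,b)$, which is immediate from (\ref{wdimpoly}). I would also remark that Lemma \ref{principaltds.lem} is what guarantees the hypotheses (hence the conclusions) are constant along Weyl-group orbits of restricted roots, which is how this proposition gets applied in later sections.
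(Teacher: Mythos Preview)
Your argument is correct and is precisely the approach the paper sets up: the abridged version defers the proof to \cite{Gindikin-Goodman}, but the machinery laid out in Section~\ref{restdim.sec} (the principal $\fsl_2$ triple $\frs$ acting on $\frn_{\xi}$, the identity $h_{\frm}^0 = 2\rho_{\frm}$ when $\Delta_0$ is simply laced, and the definition of basic root as a lowest-weight vector for $\ad h_{\frm}^0$) is exactly what you invoke, and your dimension-counting in (3) and (4) to pin down the $\frs$-module structure of $\frn_{\xi}$ is the intended argument. The parenthetical in your part~(4) about ``an irreducible of highest weight $2b+1$'' is unnecessary, since $k_{\xi}=2b$ already bounds the highest weight; once one copy of $V_{2b}$ is present, the leftover dimension is $1$ and must be trivial.
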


For proof see \cite{Gindikin-Goodman}.

\vspace{1ex}

Let $W_{G}$ be the Weyl group of $G$.  

\begin{Lemma}
\label{rootnest.lem}
Suppose $\xi = \overline{\alpha}$ and $\eta = \overline{\beta}$ are restricted
roots and that $\alpha$ and $\beta$ are the unique basic roots in the nests
$\Phi^{+}(\xi)$ and $\Phi^{+}(\eta)$. Assume that there exists $w \in W_{G}$
such that $w\alpha = \beta$ and $w\Delta_{0} = \Delta_{0}$. Then
$w\Phi^{+}(\xi) = \Phi^{+}(\eta)$ and 
$
   \langle\, \rho_{\frm} \mid w\mu \,\rangle = 
    \langle\, \rho_{\frm} \mid \mu \,\rangle  
$    
for all  $\mu \in \Phi^{+}(\xi)$.
\end{Lemma}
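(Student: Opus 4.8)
The plan is to deduce everything from the single hypothesis that $w\alpha=\beta$ and $w\Delta_0=\Delta_0$ for some $w\in W_G$. First I would observe that since $w$ permutes $\Delta_0$, it preserves the root subsystem $\Psi=(\Span\Delta_0)\cap\Phi$ and its positive part $\Psi^+$; consequently $w\rho_{\frm}=\rho_{\frm}$ because $\rho_{\frm}=\tfrac12\sum_{\beta\in\Psi^+}\beta$. Since the form $\langle\cdot\mid\cdot\rangle$ is $W_G$-invariant, this already gives $\langle\rho_{\frm}\mid w\mu\rangle=\langle w\rho_{\frm}\mid w\mu\rangle=\langle\rho_{\frm}\mid\mu\rangle$ for every $\mu$, so the displayed identity will be automatic \emph{once} I know $w\Phi^+(\xi)=\Phi^+(\eta)$. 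So the real content is the claim $w\Phi^+(\xi)=\Phi^+(\eta)$.

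For that, first I would show $w$ preserves $\fra$. Indeed $w$ fixes $\rho_{\frm}$, hence fixes the hyperplane arrangement of $\Psi$ and its span; since $\fra^*$ is (up to the identification via the form) the orthogonal complement of $\Span\Psi$ inside $\frt^*$ — recall $\Psi\perp\fra^*$ and $\fra^*=\Span\Gamma(G/H)$, while $\frc=\fra\oplus\frc_0$ with $\frc_0$ carried into $\Span\Psi$-orthogonal data — $w$ must stabilize $\fra$ and commute with the restriction map $\lambda\mapsto\overline{\lambda}$ in the sense that $\overline{w\gamma}=w\cdot\overline{\gamma}$. (Here one uses that $w$ preserves the orthogonal splitting $\frt^*=\fra^*\oplus\Span\Psi$ because it preserves each summand.) Thus for any root $\gamma\in\Phi\setminus\Psi$ we have $\overline{w\gamma}=w\cdot\overline{\gamma}$, and in particular $w$ carries the restricted root $\xi=\overline{\alpha}$ to the restricted root $\overline{w\alpha}=\overline{\beta}=\eta$. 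Therefore $w$ maps the set $\{\gamma\in\Phi:\overline{\gamma}=\xi\}$ bijectively onto $\{\gamma\in\Phi:\overline{\gamma}=\eta\}$; it remains to check positivity.

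Next I would verify that $w$ sends $\Phi^+(\xi)$ \emph{into the positive roots}. A general $w\in W_G$ need not do this, so this is where the basic-root hypothesis and the $\fsl_2$-theory of Proposition \ref{rhoshift.prop} enter. The point is that $\Phi^+(\xi)$ is exactly the $\frs$-submodule $\frn_\xi$ generated by the single basic root vector in $\frg_\alpha$ (the lowest weight vector for $\ad h_{\frm}^0$): every element of $\Phi^+(\xi)$ is of the form $\alpha+(\text{nonneg.\ combination of }\Delta_0)$, by the description preceding Proposition \ref{rhoshift.prop} that $\frn_\xi$ is $\frm$-invariant and irreducible over $\frs$ when there is a unique basic root. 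Applying $w$: since $w\alpha=\beta$ and $w$ permutes $\Delta_0$ \emph{as a set of positive simple roots of $\Psi$}, $w$ maps $\{\alpha+(\text{nonneg.\ }\Delta_0\text{-combination})\}\cap\Phi$ onto $\{\beta+(\text{nonneg.\ }\Delta_0\text{-combination})\}\cap\Phi=\Phi^+(\eta)$; in particular these are all positive roots. Combining with the bijection from the previous step, $w\Phi^+(\xi)=\Phi^+(\eta)$, and the identity for $\rho_{\frm}$ follows as explained above.

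The main obstacle I anticipate is making the step "$w$ preserves $\fra$ and commutes with restriction" airtight: one must be careful that $w\Delta_0=\Delta_0$ only controls $w$ on $\Span\Psi$, and deducing that $w$ stabilizes the complementary subspace $\fra^*$ requires the orthogonality $\Psi\perp\fra^*$ together with $W_G$-invariance of the form — that is, $\fra^*=(\Span\Psi)^\perp\cap\frt^*$ is forced, rather than merely $\Span\Gamma(G/H)$ being \emph{some} complement. Once that identification is in hand, the rest is bookkeeping with the $\frs$-module structure of $\frn_\xi$ and the already-established $w\rho_{\frm}=\rho_{\frm}$.
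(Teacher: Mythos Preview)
The abridged paper omits this proof, so I can only evaluate your argument directly.

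Your claim $\fra^{*}=(\Span\Psi)^{\perp}$ is wrong: that orthogonal complement is $\frc$, and $\fra\subsetneq\frc$ whenever $\frc_{0}\neq 0$ (Lemma~\ref{rankdim.lem}; e.g.\ Type~{\bf A\,IV} with $\ell\geq 2$, where $\dim\frc_{0}=1$). The orthogonal decomposition is $\frt=\fra\oplus\frc_{0}\oplus\Span\Delta_{0}$, and $w\Delta_{0}=\Delta_{0}$ only tells you that $w$ preserves $\Span\Delta_{0}$ and hence its complement $\frc=\fra\oplus\frc_{0}$; nothing forces $w$ to preserve the further summand $\fra$ or to commute with restriction to $\fra$. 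You correctly flag this as the main obstacle, but then assert precisely the identification that fails.

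Fortunately this detour is unnecessary: your final step already establishes $w\Phi^{+}(\xi)=\Phi^{+}(\eta)$ in full, not just positivity. The unique-basic-root hypothesis makes $\frn_{\xi}$ irreducible over $\frs$ (one-dimensional lowest $h_{\frm}^{0}$-weight space), so $\frn_{\xi}$ is spanned by the vectors $(\ad e_{\frm}^{0})^{k}\frg_{\alpha}$; since $e_{\frm}^{0}\in\bigoplus_{\gamma\in\Delta_{0}}\frg_{\gamma}$, every root in $\Phi^{+}(\xi)$ has the form $\alpha+\gamma$ with $\gamma$ a nonnegative integer combination of $\Delta_{0}$. Conversely any such $\alpha+\gamma\in\Phi$ is positive and satisfies $\overline{\alpha+\gamma}=\xi$ (elements of $\Span\Delta_{0}$ vanish on $\fra\subset\frc$), so $\Phi^{+}(\xi)$ is \emph{equal} to $\{\alpha+\gamma\}\cap\Phi$, and likewise $\Phi^{+}(\eta)=\{\beta+\gamma\}\cap\Phi$. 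Since $w$ sends $\alpha$ to $\beta$, permutes $\Delta_{0}$, and permutes $\Phi$, it carries the first description onto the second. Your argument that $w\rho_{\frm}=\rho_{\frm}$ (because $w$ permutes $\Psi^{+}$) is correct and gives the displayed identity. In short: delete the paragraph about $w$ preserving $\fra$, and promote your ``positivity'' step to the heart of the proof.
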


For proof see \cite{Gindikin-Goodman}.

\vspace{1ex}

When $\Delta_0$ has two root lengths, then the shifts $\langle\, \rho_{\frm}
\mid \alpha \,\rangle$ in the dimension formula cannot be determined just
using $h_{\frm}^{0}$. Define
\[
  \varpi_{\frm}^{0} = \rho_{\frm}  - \frac{1}{2}h_{\frm}^0   \,.
\]
Then $  \varpi_{\frm}^{0} \in \Span \Delta_0$ and the shifts are
\begin{equation}
\label{wtshift}
 \langle\, \rho_{\frm} \mid \alpha \,\rangle 
  =  \frac{1}{2}\langle\, h_{\frm}^0   \mid \alpha \,\rangle 
    + \langle\, \varpi_{\frm}^{0} \mid \alpha \,\rangle\,.
\end{equation}
From (\ref{rhoform}) we have
\[
  \langle\, \varpi_{\frm}^{0} \mid \alpha \,\rangle =
   \frac{1}{2}\langle \, \alpha \mid \alpha \,\rangle - 1\,.
\]
If $\alpha = \alpha^\vee$ for all $\alpha \in \Delta_0$, then
$\varpi_{\frm}^{0} = 0$ and $h_{\frm}^0 = 2\rho_{\frm}$.

When there are two root lengths and $\frm$ contains a simple ideal whose
Dynkin diagram is of type $C$, then from (\ref{wtshift}) and our normalization
of the Killing form we calculate that
\begin{equation}
\label{cpishift}
 \langle\, \varpi_{\frm}^{0} \mid \alpha \,\rangle = 
  \begin{cases} 0 &\mbox{for all short roots $\alpha \in \Delta_0$\,,}
    \\
    1 &\mbox{for the long root $\alpha \in \Delta_0$\,.}
 \end{cases}
\end{equation}
In this case $2\varpi_{\frm}^{0}$ is the fundamental dominant weight of
$\frm'$ associated with the long simple root. Likewise, when $\frm$ has a
simple ideal whose Dynkin diagram is of type $B$, then we calculate that
\begin{equation}
\label{bpishift}
 \langle\, \varpi_{\frm}^{0} \mid \alpha \,\rangle = 
  \begin{cases} 0 &\mbox{for all long roots $\alpha \in \Delta_0$\,,}
     \\
     -1/2 &\mbox{for the short root $\alpha \in \Delta_0$\,.}
  \end{cases}
\end{equation}
In this case $-\varpi_{\frm}^{0}$ is the fundamental dominant weight of
$\frm'$ associated with the short simple root (the highest weight of the spin
representation of $\frm'$).

\begin{Remark}
{\em 
The two situations just described suffice, since the Dynkin diagrams of type
$F_4$ and $G_2$ do not occur as subdiagrams of connected Dynkin diagrams.
}
\end{Remark}

\begin{Proposition}
\label{rhopishift.prop}
If $\xi \in \Sigma^{+}$ and $\dim \frn_{\xi} = 1$, then 
\,$d_{\xi}(\lambda) = 
 \Phi(\langle\, \lambda  \mid \xi \,\rangle \,,\,
   \langle\, \delta \mid \xi \,\rangle)$\ 
for all $\lambda \in \Gamma(G/H)$.
\end{Proposition}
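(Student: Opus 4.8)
The plan is to reduce the statement to the vanishing of a single ``shift''. Since $\dim\frn_{\xi} = 1$, the nest $\Phi^{+}(\xi)$ consists of exactly one root $\alpha$, so $\frn_{\xi} = \frg_{\alpha}$ and formula (\ref{weyldimfact}) collapses to
\[
  d_{\xi}(\lambda) =
  \frac{\langle\,\lambda+\delta\mid\xi\,\rangle + \langle\,\rho_{\frm}\mid\alpha\,\rangle}
       {\langle\,\delta\mid\xi\,\rangle + \langle\,\rho_{\frm}\mid\alpha\,\rangle}\,.
\]
Thus it suffices to show $\langle\,\rho_{\frm}\mid\alpha\,\rangle = 0$: granting this, the right-hand side becomes $(\langle\,\lambda\mid\xi\,\rangle + \langle\,\delta\mid\xi\,\rangle)/\langle\,\delta\mid\xi\,\rangle$, which is precisely $\Phi(\langle\,\lambda\mid\xi\,\rangle,\langle\,\delta\mid\xi\,\rangle)$, and the proposition follows.

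To prove the shift vanishes I would argue as follows. By the $\frl = \frm\oplus\fra$ module decomposition in Section \ref{restdim.sec}, the space $\frn_{\xi}$ is invariant under the adjoint action of $\frm = \frc_0\oplus\frm'$, hence under the semisimple ideal $\frm'$. But a one-dimensional module over a semisimple Lie algebra is trivial, since the algebra coincides with its own commutator and must therefore act by trace-zero operators. Hence $\ad X$ annihilates $\frg_{\alpha}$ for every $X\in\frm'$; taking $X = h_{\beta}$ with $\beta\in\Delta_0$ gives $\alpha(h_{\beta}) = \langle\,\alpha\mid\beta^{\vee}\,\rangle = 0$, that is $\langle\,\alpha\mid\beta\,\rangle = 0$ for all $\beta\in\Delta_0$. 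Since every root in $\Psi^{+}$ is a nonnegative integer combination of the simple roots $\Delta_0$, we have $\rho_{\frm} = \tfrac12\sum_{\beta\in\Psi^{+}}\beta \in \Span\Delta_0$, and therefore $\langle\,\rho_{\frm}\mid\alpha\,\rangle = 0$.

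I do not anticipate any real obstacle; the only point to watch is that the argument stay valid when $\Delta_0$ has two root lengths, which is the situation this proposition is meant to cover (Proposition \ref{rhoshift.prop}(2) applies directly only in the equal-length case). The computation above is length-free, relying solely on $\rho_{\frm}\in\Span\Delta_0$ and the triviality of the $\frm'$-module $\frn_{\xi}$; alternatively one may invoke (\ref{wtshift}) and note that $h_{\frm}^{0}$ and $\varpi_{\frm}^{0}$ both lie in $\Span\Delta_0$, so each pairs to zero with $\alpha$ for the same reason. In the equal-length case this is also the special case $b = 0$, $m_{\xi} = 1$ of Proposition \ref{rhoshift.prop}(2), since $\dim\frn_{\xi} = 1$ forces $k_{\xi} = 0$ by the first assertion there.
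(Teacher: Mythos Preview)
Your proof is correct and follows what is almost certainly the paper's intended route (the abridged version defers the proof to \cite{Gindikin-Goodman}, but the placement of this proposition immediately after the two-root-length discussion makes the strategy clear). The key observation---that a one-dimensional $\frm'$-module is necessarily trivial, forcing $\alpha\perp\Delta_0$ and hence $\langle\rho_{\frm}\mid\alpha\rangle=0$---is exactly the length-free argument needed to extend Proposition~\ref{rhoshift.prop}(2) beyond the simply-laced case, and your write-up handles this cleanly.
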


For proof see \cite{Gindikin-Goodman}.

\vspace{1ex}

We create the {\em marked Satake diagram} for the pair $(G, H)$ as
follows.

\begin{itemize}

\item[{\bf (i)}]
In the Dynkin diagram for $\Delta$ indicate the vertices corresponding to
elements of $\Delta_{0}$ by $\bullet$ and indicate the vertices corresponding
to the other simple roots by $\circ$. Join vertices corresponding to roots
with the same restriction to $\fra$ by a double-pointed arrow.   

\item[{\bf (ii)}] When all roots in $\Delta_0$ have the same length and
$\alpha \in \Delta\setminus \Delta_0$ is adjacent to an element of $\Delta_0$,
put the number $\langle\, h_{\frm}^0 \mid  \alpha \,\rangle$ at the vertex for
$\alpha$.

\vspace{1ex}

\item[{\bf (iii)}] 
When the roots in $\Delta_0$ have two lengths and $\alpha \in
\Delta\setminus \Delta_0$ is adjacent to an element of $\Delta_0$, put
the pair of numbers 
$(\langle\, h_{\frm}^0 \mid \alpha \,\rangle \,,\, 
\langle\, \varpi_{\frm} \mid \alpha \,\rangle)$ 
at the vertex for $\alpha$.

\end{itemize}

\noindent
Here {\em adjacent} refers to the corresponding vertices and edges in the
Dynkin diagram for the root system of $\frg$. 

\vspace{1ex}

\begin{Remark}
{\em
Let $\alpha \in \Delta \setminus \Delta_0$. Since $h_{\frm}^0$ is a linear
combination with positive coefficients of the simple coroots of $\frm$, we
have 
$ \langle\, h_{\frm}^0 \mid \alpha \,\rangle < 0$ if $\alpha$ is adjacent to
$\Delta_0$. Furthermore, 
$ \langle\, h_{\frm}^0 \mid \alpha \,\rangle = 
 \langle\, \varpi_{\frm}^0 \mid \alpha \,\rangle = 0$
if $\alpha$ is not adjacent to $\Delta_0$.
Thus the marked Satake diagram and formulas (\ref{cpishift})
and (\ref{bpishift}) determine the values $ \langle\, h_{\frm}^0 \mid \alpha
\,\rangle$ and $ \langle\, \varpi_{\frm}^0 \mid \alpha \,\rangle$ for all
$\alpha \in \Delta$.
}
\end{Remark}

\section{Rank-One Affine Spherical Spaces}
		\label{rankone.sec}

From \'E. Cartan's classification (see \cite[Ch. X Table VI]{Helgason1}) the
irreducible affine spherical pairs $(G, H)$ of rank one with $G$ semisimple
and $H$ connected and symmetric (the fixed points of an involution of $G$) are
as follows (in types {\bf B\,II} and {\bf D\,II} the spherical representations
are single-valued on the orthogonal groups, so the spin groups are not
needed).

\vspace{.5ex}

\begin{enumerate}

\item[{\bf A\,IV}: ] 
$G =\SL_{\ell+1}(\bC)$ with $\ell\geq 1$ and $H =
\GL_{\ell}(\bC)$ embedded by $h \mapsto h \oplus \det h^{-1} $.

\vspace{.5ex}

\item[{\bf B\,II}: ]
$G =\SO_{2\ell+1}(\bC)$ with $\ell\geq 2$ and $H = \SO_{2\ell}(\bC)$ embedded  by $h
\mapsto h \oplus 1$.

\vspace{.5ex}

\item[{\bf D\,II}: ]
$G =\SO_{2\ell}(\bC)$ with $\ell\geq 2$ and $H = \SO_{2\ell-1}(\bC)$ embedded  by $h
\mapsto h \oplus 1$.

\vspace{.5ex}

\item[{\bf C\,II}: ]
$G =\Sp_{2\ell}(\bC)$ with $\ell\geq 3$ and $H = \Sp_{2}(\bC)\times
\Sp_{2\ell-2}(\bC)$ embedded in block-diagonal form.

\vspace{.5ex}

\item[{\bf F\,II}: ] $G = {\bf F}_4(\bC)$ and $H = \Spin_9(\bC)$ embedded as
described in \cite[\S 4.2]{Baez}.

\end{enumerate}

\vspace{.5ex}

\noindent 
Type {\bf B\,II} with $\ell = 1$ is isomorphic to Type {\bf A\,IV} with $\ell
= 1$, and Type {\bf C\,II} with $\ell = 2$ is isomorphic to Type {\bf B\,II}
with $\ell = 2$, and so these are omitted from this list.

From Kr\"amer's classification \cite{Kramer} there are two
irreducible affine spherical pairs $(G, H)$ of rank one with $G$ simple and
$H$  not a symmetric subgroup of $G$. The groups involved form a descending
chain 
\[
  \Spin_{7}(\bC) \supset {\bf G}_2(\bC) \supset \SL_{3}(\bC) 
\]
and are of dimensions $21$, $14$, $8$ and ranks $3$, $2$, $2$ respectively.
The embeddings of the groups are described in \cite[Ch. 5]{Adams} and \cite[\S
8.10]{Wolf1}. The compact forms of the corresponding homogeneous spaces $G/H$
are constant positive curvature spheres of dimensions $7$ and $6$ (cf.
\cite[\S 12.7]{Wolf2}).

%
%

\vspace{2ex}
\noindent
\textbf{\em Case 1.} {\bf The pair $(\SL_{\ell+1},\, \GL_{\ell})$. }
Here $G$ has rank $\ell$. We take the diagonal matrices in
$\frg$ as a Cartan subalgebra and use simple roots $\alpha_i = \varepsilon_i -
\varepsilon_{i+1}$ for $i = 1,\ldots, \ell$. The fundamental $H$-spherical
weight is $\mu_1 = \varpi_1 + \varpi_{\ell}$ (see
\cite[Ch.~12.3.3]{Goodman-Wallach}). Assume for the moment that $\ell \geq 2$.
If $\ell = 2$ then $\Delta_0$ is empty, while if $\ell \geq 3$ then $\Delta_0
= \{\alpha_2, \ldots, \alpha_{\ell-1} \}$; in both cases $\dim \frc = 2$.
Since $|\Supp \mu_1 | = 2$, we have $\dim \frc_0 = 1$ and $\dim \fra = 1$ by
Lemma \ref{rankdim.lem}. Thus 
$\frm \cong \frc_0 \oplus \fsl_{\ell-1}$.

To determine $\frc_0$, we identify $\frt$ with $\frt^{*}$ using the form
$\langle \cdot \mid \cdot \rangle$ and write $\bx \in \frt$ as $c_1\alpha_1 +
\cdots + c_{\ell}\alpha_{\ell}$. Then $\langle \mu_1 \mid \bx \rangle = 0$
gives the relation $c_{\ell} = - c_1$. It is easy to check that the vector
\begin{equation}
\label{aivc0}
\begin{split}
  \by =& (\ell-1)\alpha_1
       + (\ell -3)\alpha_2 + (\ell -5)\alpha_4 + \cdots 
\\
    &\quad  +(5 -\ell)\alpha_{\ell - 2} + (3 - \ell)\alpha_{\ell-1}
       + (1 - \ell)\alpha_{\ell}
\\
   =& (\ell -1)\varepsilon_1 
    - 2\big(\varepsilon_2 + \cdots + \varepsilon_{\ell}\big) 
    + (\ell -1)\varepsilon_{\ell+1}
\end{split}    
\end{equation}
is orthogonal to $\Delta_0$, and hence gives a basis for $\frc_0$.

Since $\frc = \frc_0 \oplus \fra$ and $\dim \frc = 2$, we see from
(\ref{aivc0}) that 
\begin{equation}
\label{slgl.a} 
 \fra = \{ \bx = \diag[\,t \,,\,
  \underbrace{0\,,\,\ldots\,,\,0}_{\ell-1}\,,\,-t\,] 
  \,:\, t \in \bC \}\,.
\end{equation}
Let $\xi_1 \in \fra^*$ take the value $t$ on the element $\bx$ in
(\ref{slgl.a}). Then $\xi_1 = \overline{\alpha_1} = \overline{\alpha_{\ell}}
= \frac{1}{2}(\varepsilon_1 - \varepsilon_{\ell + 1}) $.

The multiplicities of the restricted positive roots are as follows when $\ell
\geq 2$ (details below).

\vspace{1ex}

\begin{center}

\begin{tabular}{|c |c|}
\hline
 restricted root   & multiplicity 
 \\
\hline
 $ \xi_1$  &  $2\ell - 2$
 \\
\hline
  $ 2\xi_1 $  & $1$ 
\\
\hline
\end{tabular}

\end{center}

\begin{figure}[h]
\includegraphics{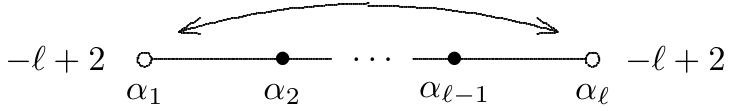} 
\caption{Marked Satake diagram for $\SL_{\ell+1}/\GL_{\ell}$ 
 with $\ell \geq 2$}
\label{diagslgl.fig}
\end{figure}

\vspace{1ex}

\noindent
From the table we see that  $\delta = \ell \xi_1$.
Since all the roots in $\Delta_0$ have the same length, \ 
 $  h_{\frm}^0 = 2\rho_{\frm} = 
 (\ell - 2)\alpha_2 + \cdots 
 + (j-1)(\ell-j)\alpha_j + \cdots + (\ell-2)\alpha_{\ell -1}$\,.
Hence 
\begin{equation}
\label{slglshift}
 \langle \, h_{\frm}^0 \mid \alpha_i \, \rangle =
 \begin{cases} 2 - \ell & \text{if $i = 1$ or $\ell$\,,} \\
               2 & \text{if $i = 2, \ldots , \ell -1 $\,.}
 \end{cases}               
\end{equation}

We now determine the nests of restricted roots, the basic roots, and the
dimension factors $d_{\xi}(\lambda)$ for $\xi \in \Sigma^{+}$ and $\lambda
\in \Gamma(G/H)$.

\vspace{1ex}

\begin{enumerate}

\item[{\bf (i)}] Let $\xi = \xi_1$. Then
\ 
$
 \Phi^{+}(\xi) = 
  \{\beta_1\,,\, \ldots \,,\, \beta_{\ell-1} \}
  \cup
  \{ \gamma_1 \,,, \ldots \,\,  \gamma_{\ell-1} \}, 
$
where 
\[
\begin{split}
 \beta_j &= \varepsilon_1 - \varepsilon_{j+1} 
 = \alpha_1 + \cdots + \alpha_{j}\,,
\\
 \gamma_j &= \varepsilon_{\ell -j + 1} - \varepsilon_{\ell+1} 
 = \alpha_{\ell - j + 1} + \cdots + \alpha_{\ell}\,.
\end{split}
\]
Thus $m_{\xi} = 2(\ell - 1)$ and the basic roots are $\beta_1$ and
$\gamma_1$. From (\ref{slglshift}) we have $k_{\xi} = - \langle \,
h_{\frm}^{0} \,,\, \beta_1 \, \rangle = - \langle \, h_{\frm}^{0} \,,\,
\gamma_1 \, \rangle = \ell - 2$.
Since $m_{\xi} = 2(k_{\xi} + 1)$, Proposition \ref{rhoshift.prop} (3) 
gives 
\[
 d_{\xi}(\lambda) = \left[
 \Phi(\langle\, \lambda + \delta \mid \xi \,\rangle \,,\,
  \langle\, \delta \mid \xi \,\rangle 
  \,;\, \textstyle{ \frac{1}{2}}(\ell - 2))
    \right]^2 \,.
\]

\vspace{1ex}

\item[{\bf (ii)}]
 Let $\xi = 2\xi_1$. Then
\ 
$ \Phi^{+}(\xi) = \{ \alpha_1 + \cdots  + \alpha_{\ell} \}$. Hence 
by Proposition \ref{rhopishift.prop}  we have
\,$d_{\xi}(\lambda)  =  
 \Phi(\langle\, \lambda  \mid \xi \,\rangle \,,\,
  \langle\,  \delta \mid \xi \,\rangle)$\,.
\end{enumerate}

From cases (i), (ii) and (\ref{restdim}) we obtain the dimension formula
\begin{equation}
\label{slgldim}
\begin{split}
 d(\lambda) &= \Phi( \langle\, \lambda  \mid \xi_1 \,\rangle \,,
   \langle\,  \delta \mid \xi_1 \,\rangle )
\left[
   \Phi\big(\langle\, \lambda \mid \xi_1 \,\rangle,
    \langle\, \delta \mid \xi_1 \,\rangle \,;\, 
   \textstyle{ \frac{1}{2} }(\ell - 2)\big)
 \right]^2 
\\
 &=  W\big(\langle\, \lambda  \mid \xi_1 \,\rangle \,,\, 
       \langle\, \delta \mid \xi_1 \,\rangle \,;\,
       m_{\xi_1}\,, m_{2\xi_1} \big) 
 \,.
\end{split}
\end{equation}
Here  $\langle\, \xi_1 \mid \xi_1 \,\rangle = \frac{1}{2}$,  so that 
$\langle\, \delta \mid \xi_1 \,\rangle = \frac{1}{2} \ell$ and 
 $\langle\, \mu_1 \mid \xi_1 \,\rangle = 1$.
Note that when $\ell = 2$ this formula becomes 
\begin{equation*}
\label{slgldim2}
 d(\lambda) = 
\left[
 \Phi\big( \langle\, \lambda + \delta \mid \xi_1 \,\rangle \,,
   \langle\,  \delta \mid \xi_1 \,\rangle \,;\, 0 \big)
 \right]^3  \,.
\end{equation*}
Thus $d(\mu_1) = 2^3$ as expected, since $\mu_1$ is the highest weight of the
adjoint representation of $G$ when $\ell = 2$.

The dimension formula when $\ell = 1$ (so $G = \SL_2(\bC)$ and $H$ is a
maximal torus in $G$) is different. In this case the fundamental $H$-spherical
highest weight is $\mu_1 = 2\varpi_1$, the multiplicities are $m_{\xi_1} = 1$,
$m_{2\xi_1} = 0$, and 
\begin{equation}
\label{slgldim1}
 d\big(\lambda) = \Phi(\langle\, \lambda  \mid \xi_1 \,\rangle \,,\,
 \langle\, \delta \mid \xi_1 \,\rangle \big)
 = W\big(\langle\, \lambda \mid \xi_1 \,\rangle \,,\,
\langle\,  \delta \mid \xi_1 \,\rangle  \,;\, m_{\xi_1} \,\big)
\,.
\end{equation}
In this case $\langle\, \xi_1 \mid \xi_1 \,\rangle = 2$, so that $\langle\,
\delta \mid \xi_1 \,\rangle = 1$ and $d(k\mu_1) = 2k + 1$ as expected.

%
%

\vspace{2ex} 
\noindent 
\textbf{\em Case 2.} {\bf The pair $(\SO_{2\ell+1},\,
\SO_{2\ell})$. } Take $G$ in the matrix form of \cite[\S
2.1.2]{Goodman-Wallach} and the diagonal matrices in $\frg$ as a Cartan
subalgebra. 

Consider first the case $\ell \geq 3$. We choose simple roots $\alpha_i =
\varepsilon_i - \varepsilon_{i+1}$ for $i = 1,\ldots, \ell-1$, and
$\alpha_{\ell} = \varepsilon_{\ell}$. The fundamental $H$-spherical weight is
$\mu_1 = \varpi_1 = \varepsilon_1$ (see \cite[Ch.~12.3.3]{Goodman-Wallach}).
Thus $\Delta_0 = \{\alpha_2, \ldots, \alpha_{\ell} \}$ and hence $\dim \frc =
1$. Since $|\Supp \mu_1 | = 1$, we have $ \fra = \frc$ by Lemma
\ref{rankdim.lem}. Thus 
$\frm \cong \fso_{2\ell-1}$
and
\begin{equation}
\label{bd.a} 
 \fra = \{ \bx = \diag[\,t \,,\,
\underbrace{0\,,\,\ldots\,,\,0}_{2\ell-1}\,,\,-t\,] \,:\, t \in \bC \}\,. 
\end{equation}
Let $\xi_1 \in \fra^*$ take the value $t$ on the element $\bx$ in
(\ref{bd.a}).  Then $\xi_1 = \overline{\alpha_1}
= \varepsilon_1$.

The multiplicities of the restricted positive roots for $\ell \geq 2$ are as
follows (details below).

\vspace{1ex}

\begin{center}

\begin{tabular}{|c |c|}
\hline
 restricted root   & multiplicity 
 \\
\hline
 $ \xi_1$  &  $2\ell - 1$
\\
\hline
\end{tabular}

\end{center}

\vspace{1ex}

\begin{figure}[h]
\includegraphics{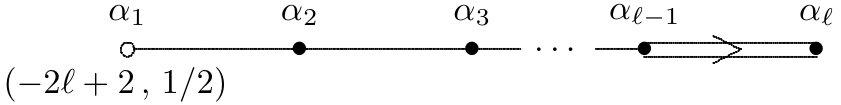} 
\caption{Marked Satake diagram for $\SO_{2\ell+1}/\SO_{2\ell}$ 
 with $\ell \geq 3$}
\label{diagbd.fig}
\end{figure}

\noindent
From the table we obtain
$\delta = \big(\ell - \frac{1}{2}\big) \xi_1$. Using the basis
$\{\varepsilon_1 , \ldots , \varepsilon_{\ell} \}$ for $\frt$ and the
identification of $\frt^{*}$ with $\frt$, we can write
\[
\begin{split}
 2\rho_{\frm} &=  (2\ell - 3)\varepsilon_2 + (2\ell - 5)\varepsilon_3
  + \cdots + 3\varepsilon_{\ell-1} + \varepsilon_{\ell} \,,
\\
 h_{\frm}^{0} &= (2\ell - 2)\varepsilon_2 + (2\ell - 4)\varepsilon_3
  + \cdots + 4\varepsilon_{\ell-1} + 2\varepsilon_{\ell}\,.
\end{split}
\]
Hence 
$\varpi_{\frm}^{0} = \rho_{\frm} - \frac{1}{2}h_{\frm}^{0}
= -\frac{1}{2}\big(\varepsilon_2 +  \cdots + \varepsilon_{\ell}\big)$. 
 From these formulas we see that
\begin{equation}
\label{bdshift1} 
 \langle\, h_{\frm}^0 \mid \alpha_i \,\rangle =
 \begin{cases} -2\ell + 2 & \text{if $i = 1$\,,} \\
               2 & \text{if $i = 2, \ldots, \ell$\,,}
 \end{cases}               
\end{equation}
and
\begin{equation}
\label{bdshift2}
 \langle \, \varpi_{\frm}^0 \mid \alpha_i \,\rangle =
 \begin{cases} 1/2 & \text{if $i = 1$\,,}\\
               0 & \text{if  $i = 2, \ldots, \ell - 1$\,,}\\
               -1/2 & \text{if $i = \ell$\,.}\\
 \end{cases}               
\end{equation}
(The last two cases in (\ref{bdshift2}) were already given in
(\ref{bpishift}).)

The nest of positive restricted roots is 
\[
\begin{split}
 \Phi^{+}(\xi_1) &= 
    \{\varepsilon_1 - \varepsilon_{j} \,:\, 2\leq j \leq \ell \}   
   \cup
   \{\varepsilon_1 + \varepsilon_{j} \,:\, 2\leq j \leq \ell \}   
   \cup
   \{\varepsilon_1 \}   
\\
   &= \{\beta_{j} \,:\, 2\leq j \leq \ell \}   
   \cup
   \{ \gamma_{j} \,:\, 2\leq j \leq \ell  \}   
   \cup
   \{\alpha_1 + \cdots + \alpha_{\ell} \} \,,  
\end{split}
\]
where $\beta_{j} = \alpha_1 + \cdots + \alpha_{j-1}$ and $\gamma_{j} =
\beta_{j} + 2\alpha_{j} + \cdots + 2\alpha_{\ell}$. Thus $\left|
\Phi^{+}(\xi_1) \right| = 2\ell - 1$ as indicated in the table.

From (\ref{bdshift1}) we see that $k_{\xi_1} = 2\ell - 2$ and the only basic
root in the nest is $\beta_{2} = \alpha_{1}$. Hence the eigenvalues of $\ad
h_{\frm}^{0}$ on $\frn_{\xi_1}$ are
\[
  -2\ell + 2, \,  \ldots \,, - 2, \, 0 , 
  \, 2, \, \ldots \,, 2\ell - 2\,,
\]
each with multiplicity one, with the negative eigenvalues coming from
$\{\beta_j\}$ and the positive eigenvalues from $\{\gamma_j\}$.  From 
(\ref{bdshift2}) we have  
$ \langle \, \varpi_{\frm}^0 \mid \beta_j \,\rangle = 1/2$, \ 
$ \langle \, \varpi_{\frm}^0 \mid \gamma_j \,\rangle = -1/2$,
and 
$\langle\, \varpi_{\frm}^0 \mid \alpha_1 + \cdots + \alpha_{\ell} \,\rangle = 0$.
Hence the $\rho_{\frm}$ shifts in the dimension formula are
\begin{equation}
\label{bdrhoshift}
  -\ell + \textstyle{\frac{3}{2}}, \, \ldots \,, - \textstyle{\frac{1}{2}}
  , \, 0 ,   \, \textstyle{\frac{1}{2}}, \, \ldots \,,  \ell - 
   \textstyle{\frac{3}{2}}\,.
\end{equation}
Note that the positive and negative shifts have unit spacing, but the space
around $0$ has size $1/2$ because of the additional shift from
$\varpi_{\frm}^{0}$, which decreases the positive eigenvalues and increases
the negative eigenvalues. Let $\lambda \in \Gamma(G/H)$ be an $H$-spherical
highest weight. It follows from (\ref{bdrhoshift}) and (\ref{restdim}) that
\begin{equation}
\label{bddim}
\begin{split}
 d(\lambda) &= \Phi( \langle\, \lambda  \mid \xi_1 \,\rangle ,
  \langle\,  \delta \mid \xi_1 \,\rangle )
\,  
\Phi\big( \langle\, \lambda  \mid \xi_1 \,\rangle ,
  \langle\,  \delta \mid \xi_1 \,\rangle \,;\, 
  \ell - \textstyle{\frac{3}{2}}\,\big)
\\
 &= W\big( \langle\, \lambda  \mid \xi_1 \,\rangle,
  \langle\, \delta \mid \xi_1 \,\rangle \,;\, 
   m_{\xi_1} \big)\,.
\end{split}
\end{equation}
Here $\langle\, \xi_1 \mid \xi_1 \,\rangle = \langle\, \mu_1 \mid \xi_1
\rangle = 1$, so that $\langle\, \delta \mid \xi_1 \,\rangle = \ell -
\frac{1}{2}$. 

\begin{Remark}
{\em
If $\lambda = k\mu_1$, then (\ref{bddim}) gives $d(\lambda)
= 2\ell + 1$ when $k = 1$ and 
\begin{equation}
\label{spherharmdim}
 d(\lambda) = \frac{
  (k+2\ell - 2)!
  }{
  k! (2\ell -1)!}
  (2k+2\ell-1)
  = \binom{2\ell + k}{k} - \binom{2\ell + k -2}{k-2}
\end{equation}
when $k \geq 2$. This is the well-known formula for the dimension of the space
of spherical harmonics of degree $k$ in $2\ell + 1$ variables.
}
\end{Remark}

The case $\ell = 2$ ($G = \SO_5(\bC)$ and $H = \SO_4(\bC)$) is different.
The fundamental $H$-spherical weight is still $\varpi_1$, but
$\Delta_0 = \{\alpha_2\}$ only has one root length, $\frm \cong \fsl_2$, and
$h_{\frm}^0 = 2\alpha_2$. The root nest for $\xi_1 = \overline{\alpha_1}$ is
\[
  \Phi^{+}(\xi_1) = 
  \{\, \alpha_1 \,,\, \alpha_1 + \alpha_2 \,,\, \alpha_1 + 2\alpha_2 \,\}
\]
with basic root $\alpha_1$, and 
$k_{\xi_1} = - \langle 2\alpha_2 \mid \alpha_1   \rangle = 2$. 
Hence Proposition \ref{rhoshift.prop} (3) gives
\begin{equation}
\label{bddim2}
 d(\lambda) = \Phi\big(\langle\, \lambda \mid \xi_1 \,\rangle \,,\,
    \langle\, \delta \mid \xi_1 \,\rangle \,;\, 1\big)
 = W\big( \langle\, \lambda  \mid \xi_1 \,\rangle,
  \langle\, \delta \mid \xi_1 \,\rangle \,;\, 
   m_{\xi_1} \big) \,.
\end{equation}
%

%
%

\vspace{2ex}
\noindent
\textbf{\em Case 3.} {\bf The pair $(\SO_{2\ell},\, \SO_{2\ell-1})$. } Assume
that $\ell \geq 2$ and take $G$ in the matrix form of \cite[\S
2.1.2]{Goodman-Wallach}, with the diagonal matrices in $\frg$ as a Cartan
subalgebra. Use the simple roots $\alpha_i = \varepsilon_i - \varepsilon_{i+1}$
for $i = 1,\ldots, \ell-1$ and $\alpha_{\ell} = \varepsilon_{\ell-1} +
\varepsilon_{\ell}$. 

Consider first the case $\ell \geq 3$. The fundamental $H$-spherical weight is
$\mu_1 = \varpi_1 = \varepsilon_1$ (see \cite[Ch.~12.3.3]{Goodman-Wallach}).
Thus $\Delta_0 = \{\alpha_2, \ldots, \alpha_{\ell} \}$ and hence $\dim \frc =
1$. Since $|\Supp \mu_1 | = 1$, we have $ \fra = \frc$ by Lemma
\ref{rankdim.lem}; thus 
$\frm = \frm' \cong \fso_{2\ell-2}$.
For this choice of Cartan subalgebra
\begin{equation}
\label{db.a} 
 \fra = \{ \bx = \diag[\,t \,,\,
\underbrace{0\,,\,\ldots\,,\,0}_{2\ell-2}\,,\,-t \,] \,:\, t \in \bC  \} \,. 
\end{equation}
Let $\xi_1 \in \fra^*$ take the value $t$ on the element $\bx$ in
(\ref{db.a}).  Then $\xi_1 = \overline{\alpha_1} = \varepsilon_1$ and 
$\langle \xi_1 \mid \xi_1 \rangle = 1$.

The multiplicities of the restricted positive roots are as follows (details
below).

\vspace{1ex}

\begin{center}

\begin{tabular}{|c |c|}
\hline
 restricted root   & multiplicity 
 \\
\hline
 $ \xi_1$  &  $2\ell - 2$
\\
\hline
\end{tabular}

\end{center}

%

\begin{figure}[h]
\includegraphics{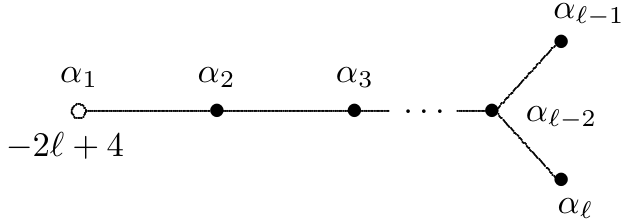} 
\caption{Marked Satake diagram for $\SO_{2\ell}/\SO_{2\ell-1}$ 
        with $\ell \geq 3$}
\label{diagdb.fig}
\end{figure}

\noindent 
From the table we obtain
$\delta = (\ell - 1) \xi_1$. 
Using the basis $\{\varepsilon_1 ,\ldots, \varepsilon_{\ell} \}$ for $\frt$
and the identification of $\frt^{*}$ with $\frt$, we can write
\[
 h_{\frm}^{0} = 2(\ell - 2)\varepsilon_2 + 2(\ell - 3)\varepsilon_3
  + \cdots + 2\varepsilon_{\ell-1} \,.
\]
Hence
\begin{equation}
\label{dbshift} 
 \langle\, h_{\frm}^0 \mid \alpha_i \,\rangle =
 \begin{cases} -2\ell + 4 & \text{if $i = 1$\,,} \\
               2 & \text{if $i = 2, \ldots, \ell$\,,}
 \end{cases}               
\end{equation}

The nest of positive restricted roots is 
\[
\begin{split}
 \Phi^{+}(\xi_1) &= 
    \{\varepsilon_1 - \varepsilon_{j} \,:\, 2\leq j \leq \ell \}   
   \cup
   \{\varepsilon_1 + \varepsilon_{j} \,:\, 2\leq j \leq \ell \}   
\\
   &= \{\beta_{j} \,:\, 2\leq j \leq \ell \}   
   \cup
   \{ \gamma_{j} \,:\, 2\leq j \leq \ell -1  \}   
   \cup\{\alpha_1 + \cdots + \alpha_{\ell-2} + \alpha_{\ell} \} \,,
\end{split}
\]
where $\beta_{j} = \alpha_1 + \cdots + \alpha_{j-1}$ and $\gamma_{j} =
\beta_{j} + 2\alpha_{j} + \cdots + 2\alpha_{\ell-2} + \alpha_{\ell-1} +
\alpha_{\ell}$ (the roots with coefficient $2$ are omitted when $j = \ell -
1$). Thus $\left| \Phi^{+}(\xi_1) \right| = 2\ell - 2$ as indicated in the
table. From (\ref{dbshift}) we see that the only basic root in the nest is
$\beta_{2} = \alpha_{1}$ and that $k_{\xi_1} = 2\ell - 4$. 

Let $\lambda \in \Gamma(G/H)$ be an $H$-spherical highest weight. Since all
the roots in $\Delta_0$ have the same length and \ $\left|\Phi^{+}(\xi)\right|
= k_{\xi_1} + 2$, it follows from Proposition \ref{rhoshift.prop} (4) that
\begin{equation}
\label{dbdim}
\begin{split}
  d(\lambda) &= \Phi(
   \langle\, \lambda  \mid \xi_1 \,\rangle ,
  \langle\, \delta \mid \xi_1 \,\rangle )
  \,
  \Phi(
   \langle\, \lambda  \mid \xi_1 \,\rangle ,
  \langle\, \delta \mid \xi_1 \,\rangle \,;\, \ell - 2 \, \big)
\\
 &= W\big( \langle\, \lambda  \mid \xi_1 \,\rangle,
  \langle\, \delta \mid \xi_1 \,\rangle \,;\, 
   m_{\xi_1}\, \big)\,.
\end{split}
\end{equation}
Here $\langle\,  \delta \mid \xi_1 \,\rangle  = 
 (\ell -1)  \langle\,  \xi_1 \mid \xi_1 \,\rangle  = \ell - 1$.

\vspace{1ex}

Consider the case $\ell = 2$. Now $\frh \cong \fsl_2$ and $\frg \cong \fsl_2
\oplus \fsl_2$ is not simple. In this case the fundamental $H$-spherical
highest weight is $\mu_1 = \varpi_1 + \varpi_2$ and $\Delta_0$ is empty. Hence
$ \frc = \frt = \fra + \frc_0$, where
$\frc_0 = \{\, \diag[\,0\,,\, t \,,\, -t \,,\, 0\,] \,:\, t \in \bC\}$
and
\begin{equation}
\label{db.a2}
  \fra = \{\, \bx = \diag[\,t \,,\, 0\,,\, 0\,,\, -t \,] \,:\, t \in \bC \}
  \,.
\end{equation}
Let $\xi_1 \in \fra^*$ take the value $t$ on the element $\bx$ in
(\ref{db.a2}). Then $\xi_1 = \overline{\alpha_1} = \overline{\alpha_2} =
\varepsilon_1$ is the positive restricted root with multiplicity two. Since
$\frm = \frc_0$, we have $\rho_{\frm} = 0$ and 
$\rho_{\frg} = \delta = \xi_1$. Hence by the Weyl character formula
\[
 d(\lambda) = 
\left[
 \Phi\big(\langle\, \lambda  \mid \xi_1 \,\rangle \,,\,
  \langle\,  \delta \mid \xi_1 \,\rangle \big)
 \right]^2 
  = W\big( \langle\, \lambda  \mid \xi_1 \,\rangle,
  \langle\, \delta \mid \xi_1 \,\rangle \,;\, 
   m_{\xi_1}\, \big)\,.
\]
%

%

\vspace{2ex}
\noindent
 \textbf{\em Case 4.}
        {\bf The pair $(\Sp_{2\ell},\, \Sp_2 \times \Sp_{2\ell-2})$. }
Assume that $\ell \geq 3$ and take $G$ in the matrix form of \cite[\S
2.1.2]{Goodman-Wallach}, with 
the Cartan subalgebra $\frt$ of $\frg$ the matrices $\bx = \diag [\, \by,
-\check{\by}]$, where 
$\by = [\, \varepsilon_1(\by), \ldots, \varepsilon_{\ell}(\by)\,]$
and 
$\check{\by} =  [\, \varepsilon_{\ell}(\by), \ldots, \varepsilon_{1}(\by)\,]$.
The roots of $\frt$ on $\frg$ are $\pm \varepsilon_i \pm \varepsilon_j$ for
$1\leq i < j \leq \ell$ and $\pm 2\varepsilon_i$ for $1 \leq i \leq \ell$.
Take the simple roots as $\alpha_i = \varepsilon_{i} - \varepsilon_{i+1}$ for
$i=1, \ldots, \ell - 1$ and $\alpha_{\ell} = 2\varepsilon_{\ell}$.

The semigroup $\Gamma(G/H)$ is free of rank $1$ with generator
\begin{equation}
\label{ccfundwt}
  \mu_1 = \varpi_2 = \varepsilon_1 + \varepsilon_2 
\end{equation}
(cf. \cite[\S 12.3.3]{Goodman-Wallach}). 
Thus 
$\Delta_{0} = \Delta \setminus \{\alpha_2\}$.
Since $|\Supp \mu_1 | = 1$ we know from Lemma \ref{rankdim.lem} that $\fra =
\frc$, $\frc_0 = 0$, and hence $\frm \cong \fsp_{2}(\bC) \oplus \fsp_{2\ell -
4}(\bC)$. Thus
\begin{equation}
\label{ccdiag}
 \fra = \{ \bx = \diag[\, \by, -\check{\by} \,]
 \ \mbox{with \ $\by = [t, t, 0, \ldots, 0 ]$ and $t \in \bC$ }  \} \,.
\end{equation}
Let $\xi_1 \in \fra^*$ take the value $t$ on the element $\bx$ in
(\ref{ccdiag}). Then 
$\xi_1 = \overline{\alpha_2} = \frac{1}{2}(\varepsilon_1 + \varepsilon_2)$. 
The restricted positive roots and their multiplicities are as follows (details
given below).

\vspace{1ex}

\begin{center}

\begin{tabular}{|c |c|}
\hline
 restricted root   & multiplicity 
 \\
\hline
 $ \xi_1$ &  $4(\ell - 2)$
 \\
\hline
 $2\xi_1$ &   $3$ 
\\
\hline
\end{tabular}

\end{center}

\vspace{1ex}

%

\begin{figure}[h]
\includegraphics{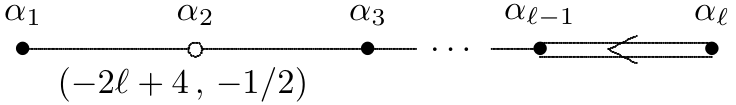} 
\caption{Marked Satake diagram for 
   $\Sp_{2\ell}/(\Sp_{2}{\times} \Sp_{2\ell-2})$ with $\ell \geq 3$}
\label{diagcii1.fig}
\end{figure}

\noindent
From the table we obtain
$\delta = (\ell - 1) \xi_1$.
Here $\Delta_0$ has  two root lengths,
with  $\alpha_{i}^\vee = \alpha_{i}$ for $i < \ell$ and
$\alpha_{\ell}^\vee = \varepsilon_{\ell}$. Furthermore
\[
\begin{split}
 \rho_{\frm} &= \textstyle{\frac{1}{2}}
   \big[\varepsilon_1 - \varepsilon_{2}\big] + 
(\ell-2)\varepsilon_3 + \cdots + 2\varepsilon_{\ell-1} + \varepsilon_{\ell} \,, 
\\
  h_{\frm}^0 &= \big[\varepsilon_1 - \varepsilon_{2}\big]
 + \big[(2\ell-5)\varepsilon_3 + \cdots 
  + 3\varepsilon_{\ell-1} + \varepsilon_{\ell}\big]
   =    h_{\frm_1}^0 +  h_{\frm_2}^0 \,,
\end{split}
\]
corresponding to the decomposition 
$\frm = \frm_1 \oplus \frm_2$ with $\frm_1 = \fsp_2(\bC)$ and $\frm_2 =
\fsp_{2\ell - 4}(\bC)$. We have 
\begin{equation}
\label{ccshift1}
 \langle\, h_{\frm_1}^0 \mid \alpha_i \,\rangle =
 \begin{cases} 
            2 & \text{if $i = 1$} \\
           -1 & \text{if $i = 2$} \\
            0 & \text{if $i > 2$}
 \end{cases}               
\quad\mbox{and}\quad
 \langle\, h_{\frm_2}^0 \mid \alpha_i \,\rangle =
 \begin{cases} 
         0 & \text{if $i = 1$} \\
        -2\ell + 5 & \text{if $i = 2$}\\
         2 & \text{if $3 \leq i \leq \ell$\,.}
 \end{cases}               
\end{equation}
Thus \ 
$ \varpi_{\frm}^0 = \rho_{\frm} - \frac{1}{2}h_{\frm}^0
 =  \frac{1}{2}\big[\varepsilon_3 + \cdots +  \varepsilon_{\ell}\big]$ \ 
 and hence
\begin{equation}
\label{ccshift2}
 \langle \, \varpi_{\frm}^0 \mid \alpha_i \,\rangle =
 \begin{cases} 0 & \text{for $i = 1$ and $3 \leq i \leq  \ell-1$\,,} \\
               -1/2 & \text{if $i = 2$\,,}\\
               1 & \text{if $i = \ell$\,,}
 \end{cases}               
\end{equation}
as in (\ref{cpishift}).
The nests of positive roots  are as follows.

\vspace{1ex}

\begin{enumerate}

\item[{\bf (i)}] Let $\xi = \xi_1$. Then
\[
\begin{split}
 \Phi^{+}(\xi) &= 
  \{\varepsilon_2 \pm \varepsilon_j \,:\, 3 \leq j \leq \ell \}
  \cup
  \{\varepsilon_1 \pm \varepsilon_j \,:\, 3 \leq j \leq \ell \} \\
  &= \{\beta_j \,:\, 3\leq j \leq \ell \}  
   \cup  \{\alpha_1 + \beta_j \,:\, 3\leq j \leq \ell \} \\
  & \quad  
   \cup  \{ \gamma_j \,:\, 3\leq j \leq \ell \} 
   \cup  \{\alpha_1 + \gamma_j   \,:\, 3\leq j \leq \ell \} \,,
\end{split}
\]  
where $\beta_j = \alpha_2 + \cdots + \alpha_{j-1}$ and
$\gamma_j = \beta_j + 2\alpha_j + \cdots 
      + 2\alpha_{\ell-1} + \alpha_{\ell}$ 
(here we take  $\gamma_{\ell} = \beta_{\ell} + \alpha_{\ell}$).
The basic root is $\beta = \alpha_2$ and $\dim \frn_{\xi} = 4(\ell - 2)$.

From (\ref{ccshift1})  we see that
$ \langle\, h_{\frm_1}^0 \mid \beta  \,\rangle = -1$
and
$ \langle\, h_{\frm_2}^0 \mid \beta  \,\rangle = -2\ell+5$.
Hence $\frn_{\xi_1} \cong \bC^{2} \otimes \bC^{2(\ell - 2)}$ as a
representation of $\frm_1 \oplus \frm_2$ (the tensor product of the defining
representations). Thus the eigenvalues of $\ad \frh_{\frm_1}^0$ on
$\frn_{\xi_1}$ are $\pm 1$ with multiplicity $2(\ell -2)$, whereas the
eigenvalues of $\ad \frh_{\frm_2}^0$ on $\frn_{\xi_1}$ are 
$-2\ell + 5$\,,\,$\ldots$\,,\,$2\ell - 5$ with multiplicity 2. 
It follows that 
$\rule{0ex}{2.5ex}\frac{1}{2}\ad \frh_{\frm}^0 = 
  \frac{1}{2}\big(\ad h_{\frm_1}^0 +  \ad h_{\frm_2}^0\big)$
has eigenvalues
\[
  -\ell + 2\,,\,\underbrace{ -\ell+ 3 \,,\, \ldots \,,\, 
   -1 \,,\, 0 \,,\, 1 \,,\, \ldots \,,\, \ell - 3}_{\mbox{multiplicity $2$}}
    \,,\, \ell - 2
\]
on $\frn_{\xi_1}$, with the highest and lowest eigenvalues of multiplicity
one. The eigenvalues $\leq 0$ come from the roots $\beta_j$ and $\alpha_1 +
\beta_j$, while the eigenvalues $\geq 0$ come from the roots $\gamma_j$ and
$\alpha_1 + \gamma_j$. By (\ref{ccshift2}) $\varpi_{\frm}^0$ takes the value
$-1/2$ on the first set of roots and the value $1/2$ on the second set. Since
$\rho_{\frm} = \frac{1}{2}h_{\frm}^0 + \varpi_{\frm}^0$, it follows that the 
values of 
$ \langle\, \rho_{\frm} \mid \alpha \, \rangle$ 
for $\alpha \in \Phi^{+}(\xi_1)$ are
\[
\textstyle{ -\ell + \frac{3}{2}}\,,\,
 \underbrace{\textstyle{ 
  -\ell+ \frac{5}{2} \,,\, \ldots \,,\,  -\frac{3}{2}
  }}_{\mbox{multiplicity $2$}} \,,\,
\textstyle{ -\frac{1}{2} \,,\, \frac{1}{2} \,,\,}
 \underbrace{\textstyle{
 \frac{3}{2} \,,\, \ldots \,,\, \ell - \frac{5}{2}
 }}_{\mbox{multiplicity $2$}}
  \textstyle{  \,,\, \ell - \frac{3}{2}}\,.
\]
Notice that the double eigenvalue $0$ for $\frac{1}{2}\ad h_{\frm}^0$ is
shifted to $\pm \frac{1}{2}$. 


\vspace{1ex}

\item[{\bf (ii)}] Let $\xi = 2\xi_1$.  Then 
\[
  \Phi^{+}(\xi) = 
 \{2\varepsilon_2\,,\, \varepsilon_1+\varepsilon_2 \,,\, 2\varepsilon_1 \}   
 =  \{ \beta\,,\, \beta + \alpha_1\,,\, \beta + 2\alpha_1 \} \,, 
\]
where 
$\beta = 2\varepsilon_2 = 2\alpha_{2} + \cdots + 2\alpha_{\ell-1} + \alpha_{\ell}$ 
is the basic root. By (\ref{ccshift1}) we have 
$\langle\, h_{\frm_1}^0 \mid \beta \,\rangle = -2$ 
and 
$ \langle\, h_{\frm_2}^0 \mid  \beta \,\rangle = 0$. Thus $\frn_{2\xi_1}$ is
the $3$-dimensional irreducible representation of $\frm_{1}$, with $\frm_{2}$
acting by zero. From (\ref{ccshift2}) we have
$ \langle \, \varpi_{\frm}^0 \mid \alpha \,\rangle = 0$ 
for all $\alpha \in \Phi^{+}(2\xi_1)$. Hence the values of 
$ \langle\, \rho_{\frm} \mid \alpha \, \rangle$ are $-1$\,,\,$0$\,,\,$1$.


\end{enumerate}

\vspace{1ex}

\noindent
From the calculation of the shifts 
$\langle\, \rho_{\frm} \mid \alpha \, \rangle$
in cases (i) and (ii) and using (\ref{restdim}), we obtain the dimension
formula
\begin{equation}
\label{ccdim}
\begin{split}
 d(\lambda) &=
                \frac{ \rule[-1ex]{0ex}{2.5ex}
               \Phi\big(\langle\, \lambda  \mid \xi_1 \,\rangle ,
               \langle\, \delta \mid \xi_1 \,\rangle  
               \,;\, \ell - \frac{5}{2}\big)
               \, \Phi\big(\langle\, \lambda  \mid \xi_1 \,\rangle ,
                \langle\, \delta \mid \xi_1 \,\rangle  
                \,;\, \ell - \frac{3}{2}\big)
          }{
       \rule{0ex}{2.5ex} \Phi\big(\langle\, \lambda  \mid \xi_1 \,\rangle ,
  \langle\, \delta \mid \xi_1 \,\rangle  \,;\, \frac{1}{2} \big)
            } 
\\
 &\qquad \times
              \Phi(\langle\, \lambda  \mid 2\xi_1 \,\rangle ,
                \langle\, \delta \mid 2\xi_1 \,\rangle \,;\, 1) 
\\
 &= W\big( \langle\, \lambda  \mid \xi_1 \,\rangle,
  \langle\, \delta \mid \xi_1 \,\rangle \,;\, 
   m_{\xi_1}, m_{2\xi_1} \big)\,.
\end{split}
\end{equation}
Here $\langle\,  \delta \mid \xi_1 \,\rangle  = 
 (2\ell - 1)  \langle\,  \xi_1 \mid \xi_1 \,\rangle  
 = \ell - \frac{1}{2}$.

\begin{Remark}
{\em
 Let $\lambda = k\mu_1 = 2k\xi_1$. 
Then $ \langle \lambda + \delta \mid \xi_1 \rangle = k + \ell - \frac{1}{2}$. 
Taking $k = 1$ in (\ref{ccdim}) gives $d(\mu_1) = (2\ell+1)(\ell-1) =
\binom{2\ell}{2} - \binom{2\ell}{0}$. In this case $\mu_1$ is the highest
weight of the traceless ({\em harmonic}) subspace in $\bigwedge^2 \bC^{2\ell}$
(see \cite[Cor. 5.5.17]{Goodman-Wallach}).
}
\end{Remark}

%
%

\vspace{1ex}
\noindent
\textbf{\em Case 5.} {\bf The pair $({\bf F}_4,\, \Spin_{9})$. }
For the Cartan subalgebra $\frt \cong
\bC^{4}$ in $\frg$ we follow \cite[Planche VIII]{Bourbaki3} and use simple
roots 
$\alpha_1 = \varepsilon_2 - \varepsilon_{3}$, \
$\alpha_2 = \varepsilon_3 - \varepsilon_{4}$, \ 
$ \alpha_3 = \varepsilon_4$, \ and \ 
$\alpha_4 = (1/2)
(\varepsilon_{1} - \varepsilon_{2}- \varepsilon_{3} - \varepsilon_{4})$.
The fundamental $H$-spherical weight is $\mu_1 = \varpi_4 = \varepsilon_1$
(see \cite{Kramer}). Thus $\Delta_0 = \{\alpha_1, \alpha_2, \alpha_{3} \}$
and hence $\dim \frc = 1$. Since $|\Supp \mu_1 | = 1$, we have $ \fra = \frc$
by Lemma \ref{rankdim.lem}. Thus 
$\frm \cong \fso_{7}(\bC)$.
When $\frt$ is identified with $\bC^{4}$ using the basis dual  
to $\{\varepsilon_1, \varepsilon_2, \varepsilon_3, \varepsilon_4\}$ then 
$\fra$ consists of all elements 
\begin{equation}
\label{f4b4.a}
  \{\bx = [\,  t\,,\, 0 \,,\, 0 \,,\, 0 \,] \,:\, t \in \bC \}\,.
\end{equation}
Let $\xi_1 \in \fra^{*}$ take the value $t$ on the element $\bx$ in
(\ref{f4b4.a}). Then $\xi_1 = \overline{\alpha_4} = \frac{1}{2}\varepsilon_1$.

The multiplicities of the restricted positive roots are as follows (details
given below).

\vspace{1ex}

\begin{center}

\begin{tabular}{|c |c|}
\hline
 restricted root   & multiplicity 
 \\
\hline
 $ \xi_1$  &  $8$
\\
\hline
 $ 2\xi_1$  &  $7$
\\
\hline
\end{tabular}

\end{center}

\vspace{1ex}

\begin{figure}[h]
\includegraphics{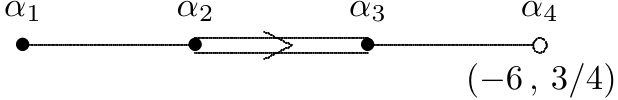} 
\caption{Marked Satake diagram for ${\bf F}_{4}/\Spin_{9}$}
\label{diagf4b4.fig}
\end{figure}

\noindent
From the table we calculate that 
$\delta = 11 \xi_1$.
Using the basis 
$\{\,\varepsilon_1 \,,\, \varepsilon_2 \,,\, \varepsilon_3 \,,\, 
\varepsilon_4 \, \}$ for $\frt$ and the
identification of $\frt^{*}$ with $\frt$, we can write
\[
 2\rho_{\frm} = 5\varepsilon_2 + 3\varepsilon_3 + \varepsilon_{4}\,,
\qquad
 h_{\frm}^{0} = 6\varepsilon_2 + 4\varepsilon_3 + 2\varepsilon_{4}\,.
\]
Hence 
$\varpi_{\frm}^{0} = \rho_{\frm} - \frac{1}{2} h_{\frm}^{0}
= - \frac{1}{2}
 \big(\varepsilon_2 +  \varepsilon_{3} + \varepsilon_{4} \big)$. 
 From these formulas we see that
\begin{equation}
\label{f4b4shift1} 
 \langle\, h_{\frm}^0 \mid \alpha_i \,\rangle =
 \begin{cases} 2 & \text{if $i = 1, 2, 3$\,,} \\
              -6 & \text{if $i = 4$\,,} 
 \end{cases}               
\end{equation}
and
\begin{equation}
\label{f4b4shift2}
 \langle \, \varpi_{\frm}^0 \mid \alpha_i \,\rangle =
 \begin{cases}  0 & \text{if $i = 1, 2$\,,}\\
               -1/2 & \text{if  $i = 3$\,,}\\
                3/4 & \text{if $i = 4$\,.}\\
 \end{cases}               
\end{equation}
Note that the first two cases in (\ref{f4b4shift2}) also follow from
(\ref{bpishift}). 

We now find the nests of positive restricted roots, the basic roots, and the
dimension factors $d_{\xi}(\lambda)$ for $\xi \in \Sigma^{+}$ and $\lambda
\in \Gamma(G/H)$.

\vspace{1ex}

\begin{enumerate}

\item[{\bf (i)}] Let $\xi = \xi_1 = \overline{\alpha_4}$. Then from
\cite[Planche VIII]{Bourbaki3}
\[
\quad\qquad \Phi^{+}(\xi) =   
 \{\, \beta_4 \,,\, \beta_3 \,,\, \beta_2 \,,\,  \beta_1 \,\}
   \cup
  \{ \, \gamma_1 \,,\, \gamma_2 \,,\, \gamma_3 \,,\, \gamma_4 \,\}\,,
\]
where $\beta_j = \alpha_j + \cdots + \alpha_4$\,, $\gamma_1
= \beta_2 + \alpha_3$\,,\, $\gamma_2 = \beta_1 + \alpha_3$\,,\,
  $\gamma_3 = \beta_1 + \alpha_3 + \alpha_2$\,, and 
   $\gamma_4 = \beta_1 + 2\alpha_3 + \alpha_2$\,.
The basic root in the nest is $\beta_4$ and $\dim \frn_{\xi} = 8$. From
(\ref{f4b4shift1})  the eigenvalues of 
$\frac{1}{2}\ad h_{\frm}^{0}$ on $\frn_{\xi}$ are
\[
  -3 \,,\, - 2 \,,\, - 1 \,,\,  0 \,,\, 0 \,,\, 
   1 \,,\, 2 \,,\, 3 
\]
corresponding to the roots
$\beta_4$\,,\,$\ldots$\,,\,$\beta_1$\,,\,$\gamma_1$
\,,\,$\ldots$\,,\,$\gamma_4$
enumerated in increasing length (relative to the simple roots). Thus
$\frn_{\xi}$ decomposes as the one-dimensional plus the seven-dimensional
representation of $\{e_{\frm}^0\,,\, f_{\frm}^0 \,,\, h_{\frm}^0 \}$. From
(\ref{f4b4shift2}) we have 
\[
\begin{split}
  \langle \, \varpi_{\frm}^0 \mid \beta_j \,\rangle &= 
    \begin{cases}
                1/4 & \text{if $i = 1, 2, 3$\,,}\\
                3/4 & \text{if $i = 4$\,,}\\
    \end{cases}               
\\
 \langle \, \varpi_{\frm}^0 \mid \gamma_j \,\rangle &=
    \begin{cases}
                -1/4 & \text{if $i = 1, 2, 3$\,,}\\
                -3/4 & \text{if $i = 4$\,.}\\
    \end{cases}               
\end{split}
\]
 Since
$\rho_{\frm} = \frac{1}{2}h_{\frm}^{0} + \varpi_{\frm}^{0}$, it follows that
the shifts in the formula for $d_{\xi}(\lambda)$ are
\begin{equation}
\label{f4b4rhoshift1}
\textstyle{
 -\frac{9}{4} \,,\, -\frac{7}{4} \,,\,  - \frac{3}{4} \,,\, -\frac{1}{4}\,,\, 
 \frac{1}{4} \,,\, \frac{3}{4} \,,\, \frac{7}{4}  \,,\,  \frac{9}{4} \,.
}
\end{equation}
Thus the shifts are symmetric about $0$ but are not in arithmetic progression.

\vspace{1ex}

\item[{\bf (ii)}]
Let $\xi = 2\xi_1 = \overline{\beta}$, where $\beta = \alpha_2 + 2\alpha_3 +
2\alpha_4$. Then \cite[Planche VIII]{Bourbaki3} gives
\[
\begin{split}
\ \Phi^{+}(\xi) &=   
 \{\, \beta \,,\, \beta + \alpha_1 \,,\, \beta + \alpha_1 + \alpha_2 
 \,,\,  \beta + \alpha_1 + \alpha_2 + \alpha_3 \,\}
\\
 &\   \cup
 \{\,  \beta + \alpha_1 + \alpha_2 + 2\alpha_3 \,,\, 
     \beta + \alpha_1 + 2\alpha_2 + 2\alpha_3  \,,\,
    \beta + 2\alpha_1 + 2\alpha_2 + 2\alpha_3   \,\}\,.
\end{split}
\]
Thus $\beta$ is the basic root in the
nest and  $\dim \frn_{\xi} = 7$. From
(\ref{f4b4shift1})  the eigenvalues of 
$\frac{1}{2}\ad h_{\frm}^{0}$ on $\frn_{\xi}$ are
\[
  -3 \,,\, - 2 \,,\, - 1 \,,\,  0 \,,\,   1 \,,\, 2 \,,\, 3 
\]
corresponding to the roots $\alpha \in \Phi^{+}(\xi)$ enumerated by increasing
length (relative to the simple roots). From (\ref{f4b4shift2}) we calculate
that the corresponding sequence of values of 
$\rule{0ex}{2.5ex}\langle \, \varpi_{\frm}^0 \mid \alpha \,\rangle$ is
\[
\textstyle{
  \frac{1}{2} \,,\,  \frac{1}{2} \,,\,  \frac{1}{2} \,,\,
    0 \,,\, -\frac{1}{2} \,,\, -\frac{1}{2} \,,\, -\frac{1}{2}\,.
}
\]
Since $\rho_{\frm} = \frac{1}{2}h_{\frm}^{0} + \varpi_{\frm}^{0}$, it follows
that the shifts in the formula for $d_{\xi}(\lambda)$ are
\begin{equation}
\label{f4b4rhoshift2}
\textstyle{
 -\frac{5}{2} \,,\, -\frac{3}{2} \,,\,  - \frac{1}{2} \,,\, 0 \,,\, 
 \frac{1}{2} \,,\, \frac{3}{2} \,,\, \frac{5}{2} \,.
}
\end{equation}
Thus the shifts are symmetric about $0$ but are not in arithmetic progression,
and are the same as for $\SO_{9}/\SO_{8}$ in Case 2.
\end{enumerate}

\noindent
From (\ref{f4b4rhoshift1}), (\ref{f4b4rhoshift2}), and (\ref{restdim}) we
obtain the complete dimension formula
\begin{equation}
\label{f4b4dim}
\begin{split}
  d(\lambda) &= c \,     \langle \lambda + \delta \mid 2\xi_1 \rangle 
   \prod_{j = 1, 3, 7, 9}
    \Big\{ \langle \lambda + \delta \mid \xi_1 \rangle^2 - 
      \Big(\frac{j}{4} \Big)^2 \Big\} 
\\
  &\qquad\qquad \times  \prod_{j = 1, 3, 5}
    \Big\{ \langle \lambda + \delta \mid 2\xi_1 \rangle^2 - 
      \Big(\frac{j}{2} \Big)^2 \Big\} \,,
\end{split}
\end{equation}
where $c$ is the normalizing constant to make $d(0) = 1$. Regrouping the
terms, we can write this formula in terms of the normalized  Weyl dimension
function  as
\begin{equation}
\label{f4b4dim2}
\begin{split}
  d(\lambda) &=  
   \Phi( \langle \lambda \mid \xi_1 \rangle ,
  \langle  \delta \mid \xi_1 \rangle )
\,
  \Phi\big(\langle \lambda \mid 2\xi_1 \rangle ,
   \langle  \delta \mid 2\xi_1 \rangle 
   \,;\,  \textstyle{\frac{3}{2}} \,\big)
\,
  \Phi\big(\langle \lambda  \mid 2\xi_1 \rangle ,
  \langle  \delta \mid 2\xi_1 \rangle 
   \,;\,  \textstyle{ \frac{9}{2} } \,\big)
\\
 &= W\big( \langle\, \lambda  \mid \xi_1 \,\rangle,
  \langle\, \delta \mid \xi_1 \,\rangle \,;\, 
   m_{\xi_1}, m_{2\xi_1} \big)\,.
\end{split}   
\end{equation}

\begin{Remark} 
{\em
 If $\lambda = k\mu_1$ with $k$ a nonnegative integer, then
$ \langle \lambda + \delta \mid 2\xi_1 \rangle = 
 k +  \frac{11}{2}$
since $\lambda + \delta = (k + \frac{11}{2})2\xi_1$ and $\langle\, 2\xi_1 \mid 2\xi_1
\,\rangle = 1$. Taking $k = 1$ in (\ref{f4b4dim2}) we obtain $d(\mu_1) = 26$
(the representation of the compact form of $F_4$ on the traceless $3\times 3$
hermitian matrices over the octonians; see \cite[\S 4.2]{Baez}).
}
\end{Remark}

%

\vspace{1ex}
\noindent 
\textbf{\em Case 6.} {\bf The pair $(\Spin_{7},\, {\bf G}_2)$. } 
We take the matrix realization of $\frg$ as in \cite[\S
2.1.2]{Goodman-Wallach}, with Cartan subalgebra $\frt$ consisting of diagonal
matrices 
$\bx = \diag[\, \by, 0 , -\check{\by}\,]$ with $\by \in \bC^3$.  
The simple roots are $\alpha_1 = \varepsilon_1 - \varepsilon_2$, $\alpha_2 =
\varepsilon_2 - \varepsilon_3$, and $\alpha_3 = \varepsilon_3$.

From \cite{Kramer} we know that $(G, H)$ is a spherical pair and
$\Gamma(G/H)$ has generator 
$\mu_1 = \varpi_3 = 
  \frac{1}{2}(\varepsilon_1 + \varepsilon_2 + \varepsilon_3)$
(the spin representation on $\bC^8$). Hence $\Delta_{0} = \{\alpha_1,
\alpha_2\}$. If $\bx = \diag[\, \by, 0 , -\check{\by}\,]$ and $\langle
\alpha_1, \bx \rangle = \langle \alpha_2, \bx \rangle = 0$, then $\by = [a, a,
a ]$. Thus $\dim \frc = 1$. By Lemma \ref{rankdim.lem} we have $\fra = \frc$
and $\frm \cong \fsl_{3}(\bC)$. We take the restricted root 
$\xi_1 = \overline{\alpha}_3 
 = \frac{1}{3}(\varepsilon_1 + \varepsilon_2 + \varepsilon_3)$ 
as a basis for $\fra^{*}$.

The nests of restricted roots are 
\[
 \Phi^{+}(\xi_1) = 
  \{\alpha_3\,,  \alpha_2 + \alpha_3 \,, \alpha_1 + \alpha_2 + \alpha_3 \} 
\]
with basic root $\beta_1 = \alpha_3$, and 
\[
 \Phi^{+}(2\xi_1) = 
   \{ \alpha_2 + 2\alpha_3\,,
    \alpha_1 + \alpha_2 + 2\alpha_3\,, \alpha_1 + 2\alpha_2 + 2\alpha_3 \} 
\]
with basic root $\beta_2 = \alpha_2 + 2\alpha_3$.
Hence the multiplicities of the restricted positive roots are as follows.

\vspace{1ex}

\begin{center}

\begin{tabular}{|c |c|}
\hline
 restricted root   & multiplicity 
 \\
\hline
 $ \xi_1$  &  $3$
 \\
\hline
  $ 2\xi_1 $  & $3$ 
\\
\hline
\end{tabular}

\end{center}

\begin{figure}[h]
\includegraphics{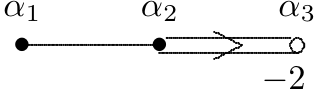} 
\caption{Marked Satake diagram for $\Spin_7/{\bf G}_2$}
\label{diagb3g2.fig}
\end{figure}

\noindent 
From the table we obtain $\delta = \frac{9}{2}\xi_1$.
Since $\{\alpha_1\,, \alpha_2\}$ are the simple roots for $\frm$, 
we have
$h_{\frm}^0 = 2\rho_{\frm} = 2\alpha_1 + 2\alpha_2$.
Hence 
\begin{equation}
\label{b3g2shift}
 \langle \, h_{\frm}^0 \mid \alpha_i \, \rangle =
 \begin{cases} -2 & \text{if $i = 3$\,,} \\
               2 & \text{if $i = 1, 2$\,.}
 \end{cases}               
\end{equation}

We now obtain the dimension formula. From (\ref{b3g2shift}) we have
$\langle\, h_{\frm}^0 \mid \beta_1 \, \rangle = 
\langle\, h_{\frm}^0 \mid \beta_2 \, \rangle = -2$.
Hence $k_{\xi} = 2$ and $m_{\xi} = 3$ for $\xi = \xi_1$ and $\xi = 2\xi_1$.
Thus Proposition \ref{rhoshift.prop} (3) and formula (\ref{restdim}) give
\begin{equation}
\label{b3g2dim} 
\begin{split}
 d(\lambda) &= 
     \Phi(\langle\, \lambda  \mid \xi_1 \,\rangle \,,\, 
     \langle\, \delta \mid \xi_1 \,\rangle \,;\, 1)
 \,
     \Phi(\langle\, \lambda  \mid 2\xi_1 \,\rangle \,,\,
       \langle\,  \delta \mid 2\xi_1 \,\rangle \,;\, 1)
\\
 &= W\big( \langle\, \lambda  \mid \xi_1 \,\rangle \,,\,
  \langle\, \delta \mid \xi_1 \,\rangle \,;\, 
   m_{\xi_1}, m_{2\xi_1} \big)\,.
\end{split}   
\end{equation} 

\begin{Remark}
{\em
 Let $\lambda = k\mu_1$, where $k$ is a nonnegative integer.
Then 
$\langle\, \lambda + \delta \mid \xi_1 \,\rangle = \frac{1}{2}(k + 1)$.
Using this in (\ref{b3g2dim}), we obtain
\[
  d(\lambda) = \frac{k+3}{3}\prod_{j=1}^{5}\frac{k+j}{j} \, .
\]
For $k=1$ the formula gives $d(\mu_1) = 8$ (the spin representation). For $k =
2$ the formula gives $d(2\mu_1) = 35$. 
}
\end{Remark}

\begin{Remark}
{\em
The representation of $\Spin_{7}(\bC)$ on $\bigwedge^3 \bC^7$ has highest
weight $2\mu_1$. A fundamental property of ${\bf G}_2(\bC)$ is that it has a
unique one-dimensional subspace of fixed vectors in $\bigwedge^3 \bC^7$ (see
\cite[\S 4.1]{Baez} and \cite{Agricola}).
}
\end{Remark}

%

\vspace{1ex}
\noindent
\textbf{\em Case 7.} {\bf The pair $({\bf G}_{2},\, \SL_{3})$. }
We take ${\bf G}_2$ root system as a subset of the integer vectors in $\bR^3$
with coordinates summing to zero, with simple roots $\alpha_1 = \varepsilon_1 -
\varepsilon_2$ and $\alpha_2 = -2\varepsilon_1 + \varepsilon_2 +
\varepsilon_3$ (see \cite[Planche IX]{Bourbaki3}). The remaining positive
roots are 
$\alpha_1 + \alpha_2$, $2\alpha_1 + \alpha_2$, $3\alpha_1 + \alpha_2$,   
$3\alpha_1 + 2\alpha_2$.   

From \cite{Kramer} we know that $(G, H)$ is a spherical pair and
$\Gamma(G/H)$ has generator 
$\mu_1 = \varpi_1 = 2\alpha_1 + \alpha_2$.
Hence $\Delta_{0} = \{ \alpha_2 \}$ (see \cite[Planche X]{Bourbaki3}). It
follows from Lemma \ref{rankdim.lem} that $\fra = \frc = \{\alpha_2\}^{\perp}$
and $\frm \cong \fsl_{2}(\bC)$.  For the normalized inner product we define
\[
  \langle\, \varepsilon_i \mid \varepsilon_j \,\rangle  =
  \begin{cases} 1/3 & \mbox{if $i = j$\,,}
    \\
      0 &\mbox{if $i \neq j$\,.}
\end{cases}
\]
Then $\langle\, \alpha_2 \mid \alpha_2 \,\rangle = 2$ as required in Section
\ref{restdim.sec}, and the root $2\alpha_1 + \alpha_2= -\varepsilon_2 +
\varepsilon_3$ is a basis for $\fra$.

Let $\xi_1 = \overline{\alpha_1}$. The nests of restricted roots are 
\[
\begin{split}
 \Phi^{+}(\xi_1) &= 
  \{\alpha_1\,,\,  \alpha_1 + \alpha_2 \} \,,
\mbox{\ basic root $\beta_1 = \alpha_1$}\,,
\\
 \Phi^{+}(2\xi_1) &= 
   \{ 2\alpha_1 + \alpha_2 \} \,,
\\
 \Phi^{+}(3\xi_1) &= 
   \{ 3\alpha_1 + \alpha_2\,,\, 3\alpha_1 +  2\alpha_2 \} \,,  
\mbox{\ basic root $\beta_3 = 3\alpha_1 + 2\alpha_2$}\,.
\end{split}
\]
Hence the multiplicities of the restricted positive roots are as follows.

\vspace{1ex}

\begin{center}

\begin{tabular}{|c |c|}
\hline
 restricted root   & multiplicity 
 \\
\hline
 $ \xi_1$  &  $2$
 \\
\hline
  $ 2\xi_1 $  & $1$ 
\\
\hline
  $ 3\xi_1 $  & $2$ 
\\
\hline
\end{tabular}

\end{center}

\begin{figure}[h]
\includegraphics{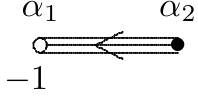} 
\caption{Marked Satake diagram for ${\bf G}_2/\SL_3$}
\label{diagg2a2.fig}
\end{figure}

\noindent 
From the table we obtain $\delta = 5\xi_1$.
We have $h_{\frm}^0 = \alpha_2$, so that
\begin{equation}
\label{g2a2shift}
  \langle\, h_{\frm}^0 \mid \alpha_j \,\rangle  =
  \begin{cases} -1 & \mbox{if $j = 1$\,,}
    \\
      2 &\mbox{if $j = 2$\,.}
\end{cases}
\end{equation}

We now obtain the dimension formula. Let $\lambda \in \Gamma(G/H)$. For the
basic roots $\beta_1$ and $\beta_3$ we calculate from (\ref{g2a2shift}) that 
$\langle\, h_{\frm}^0 \mid \beta_j \,\rangle = -1$. Let $\xi$ be $\xi_1$ or
$3\xi_1$. Since $\dim \frn_{\xi} = 2$, Proposition \ref{rhoshift.prop} (3)
gives
\[
 d_{\xi} = \Phi\big(\langle\, \lambda  \mid \xi \,\rangle \,,\,
  \langle\,  \delta \mid \xi \,\rangle 
  \,;\, \textstyle{ \frac{1}{2} }\big)
 \,.
\]
When $\xi = 2\xi_1$, then $\dim \frn_{\xi} = 1$. Hence by Proposition
\ref{rhoshift.prop} (2) and  (\ref{restdim}) it follows that

\begin{equation}
\label{g2a2dim} 
\begin{split}
 d(\lambda) &= 
 \Phi\big(\langle\, \lambda  \mid \xi_1 \,\rangle \,,\,
   \langle\,  \delta \mid \xi_1 \,\rangle 
 \,;\, \textstyle{ \frac{1}{2} } \big)
\,
 \Phi(\langle\, \lambda \mid 2\xi_1 \,\rangle \,,\,
  \langle\, \delta \mid 2\xi_1 \,\rangle )
\\
 &\qquad \times
  \Phi\big(\langle\, \lambda \mid 3\xi_1 \,\rangle \,,\,
   \langle\,  \delta \mid 3\xi_1 \,\rangle 
   \,;\, \textstyle{\frac{1}{2} }\big)
\\
 &= W\big( \langle\, \lambda  \mid \xi_1 \,\rangle\,,
  \langle\, \delta \mid \xi_1 \,\rangle \,;\, 
   m_{\xi_1}\,,\, m_{2\xi_1}\,,\, m_{3\xi_1}\, \big)\,.
\end{split}   
\end{equation} 

\begin{Remark} 
{\em
Let $\lambda = k\mu_1 \in \Gamma(G/H)$, where $k$ is a
nonnegative integer. The normalized inner products are 
$\langle\, \xi_1 \mid \xi_1 \,\rangle = 1/6$, \ 
$\langle\, \delta \mid \xi_1 \,\rangle = 5/6$, \   
and
$\langle\, \mu_1 \mid \xi_1 \,\rangle = 1/3$ (since  $\mu_1 = 2\xi_1$).
Using these values, we can write (\ref{g2a2dim}) in terms of $k$ as
\[
 d(\lambda) =  \frac{2k+5}{5}\prod_{j=1}^{4} \frac{k+j}{j} \, .
\]
In particular, when $k = 1$ we get $d(\mu_1) =  7$ as expected. 
}
\end{Remark}

\section{Higher Rank Non-Symmetric Excellent Affine Spherical Spaces}
            \label{highrank.sec}
%

Here is the list due to Kr\"amer \cite{Kramer} of excellent irreducible
spherical pairs $(G, H)$ of rank greater than one with $G$ simple and
simply-connected, $H$ reductive, connected, and not a symmetric subgroup of
$G$ (cf. \cite[\S 12.7]{Wolf2} for geometric descriptions). These pairs are
determined by their Lie algebras $(\frg, \frh)$. The enumeration below follows
\cite[Table 1]{Avdeev2}, which also includes all symmetric subgroups and also
includes the pairs that are not excellent. 

\begin{enumerate}

\item[4:] 
$G = \SL_{p+q}(\bC)$ and $H = \SL_{p}(\bC)\times \SL_{q}(\bC)$ with $1\leq p <
q$. Here $H$ is embedded in $G$ by $(x, y) \mapsto (x^{-1})^{t} \oplus y$ for
$x\in \SL_{p}(\bC)$ and $y \in \SL_{q}(\bC)$.

\vspace{1ex}

\item[6:]
$G = \SL_{2n+1}(\bC)$ and $H \cong \Sp_{2n}(\bC)$ with $n \geq 1$. Here $H$ is
embedded in $G$ by $x \mapsto x \oplus 1$ for $x\in \Sp_{2n}(\bC)$.

\vspace{1ex}

\item[9:]
$G = \Spin_{4p+2}(\bC)$ and $H \cong \SL_{2p+1}(\bC)$ with $p \geq 1$. Here
$H$ is embedded in $G$ by lifting the embedding $x \mapsto x \oplus
(x^{-1})^{t}$ of $\SL_{2p+1}(\bC)$ into $\SO_{4p+2}(\bC, \omega)$, where
$\omega$ the symmetric bilinear form on $\bC^{4p+2}$ with matrix $1$ on the
antidiagonal and zero elsewhere. 

\vspace{1ex}

\item[10:]
$G = \Spin_{2n+1}(\bC, \omega)$ and $H$ a covering of $\GL_{n}(\bC)$, with
$\omega$ the symmetric bilinear form on $\bC^{2n+1}$ with matrix $1$ on the
antidiagonal and zero elsewhere. Here $H$ is the connected inverse image under
the spin covering of the embedding $x \mapsto x \oplus 1 \oplus (x^{-1})^{t}$
of $\GL_{n}(\bC)$ into $\SO_{2n+1}(\bC)$.

\vspace{1ex}

\item[13:]
$G = \Spin_{9}(\bC)$ and $H = \Spin_{7}(\bC)$. Here $H$ embedded in $G$ by
lifting to $G$ the homomorphism $x \mapsto \pi_3(x) \oplus 1 $ of $H$ into
$\SO_{9}(\bC)$, where $\pi_3$ is the spin representation of $H$ of degree $8$.

\vspace{1ex}

\item[15:]
$G = \Spin_{8}(\bC)$ and $H = {\bf G}_2(\bC)$. Here $H$ embedded in $G$ by $x
\mapsto \pi_1(x) \oplus 1 $, where $\pi_1$ is the representation of $H$ of
degree $7$.

\vspace{1ex}

\item[19:]
$G = \Sp_{2\ell}(\bC)$ and $H = \bC^{\times} \times \Sp_{2\ell-2}(\bC)$ with
$\ell \geq 2$. Here $H$ is embedded by $(z, h) \mapsto \diag[\, z\,, h\,,
z^{-1}\,]$ for $z \in \bC^{\times}$ and $h \in \Sp_{2\ell-2}(\bC)$.

\vspace{1ex}

\item[26:]
$G = {\bf E}_6(\bC)$ and $H = \Spin_{10}(\bC)$. Here $H$ is embedded into the
degree 27 irreducible representation of $G$ by the map $x \mapsto 1 \oplus
\pi_1(x) \oplus \pi_5(x)$, where $\pi_1$ is the vector representation of
degree 10 and $\pi_5$ is a half-spin representation of degree 16.

\end{enumerate}

\vspace{1ex}

We now proceed to calculate the restricted root systems and the restricted
Weyl dimension functions for these pairs. When $G$ is a classical group we
take its matrix form such that the diagonal matrices in $G$ give a maximal
torus $T$. The usual inner product $\langle\, \cdot \mid \cdot \,\rangle$
making the coordinate functions $\{\varepsilon_i\}$ an orthonormal set
satisfies the normalization conditions of Section \ref{restdim.sec} for these
root systems. The choice of positive roots is indicated in each case.


\vspace{2ex}
\noindent
\textbf{\em Case 1.} {\bf The pair $(\SL_{p+q},\, \SL_p \times \SL_q)$.}
Assume $1 \leq p < q$. Let $T$ be diagonal matrices in $G$ and $U$ the
upper-triangular unipotent matrices. Then $B = TU$ is a Borel subgroup. The
simple roots are 
$\alpha_i = \varepsilon_{i} - \varepsilon_{i+1}$ and the fundamental weights
are
\[
 \varpi_i = \varepsilon_1 + \cdots + \varepsilon_{i} 
 - \frac{i}{n}\sum_{j=1}^{n} \varepsilon_{j}
\]  
with $n = p+q$ and  $i = 1, \ldots, \ell$, where $\ell = n-1$.

From \cite{Kramer} we know that $(G, H)$ is a spherical pair and $\Gamma(G/H)$
has $r = p+1$ generators
\begin{align*}
  \mu_1 =& \ \varpi_1+\varpi_{\ell}\,, \;
  \mu_2 = \varpi_2+\varpi_{\ell -1}\,, \,
  \ldots \,,
  \mu_{p-1} = \varpi_{p-1} + \varpi_{q+1} \,,
\\  
  \mu_{p} =& \ \varpi_{p} \,, \;  \mu_{p+1} = \varpi_{q} \;.
\end{align*}
Hence the support condition in Definition \ref{excellent.def} is satisfied and 
\[
 \Delta_{0} = 
  \{\alpha_{p+1}\,, \; \alpha_{p+2}\,, \ldots \,, \alpha_{q-1} \}\,.
\]
Since $|\Supp \mu_i | = 2$ for $i = 1, \ldots, p-1$, we know by Lemma
\ref{rankdim.lem} that $\dim \frc = 2(p-1) + 2 = 2p$ and $\dim \frc_0 = 2p -
(p+1) = p-1$. Thus if $p = 1$ then $\frc_{0} = 0$. Assume now $p \geq 2$ and
identify $\frt$ with $\frt^{*}$ using the form $\langle \cdot \mid \cdot
\rangle$. If $\bx = c_1\alpha_1 + \cdots + c_{\ell}\alpha_{\ell}$, 
 then the equations $\langle \mu_i \mid \bx \rangle = 0$ for $i = 1,
\ldots , r$ give the relations
\[
 c_{\ell + 1 - i} = - c_i \quad\mbox{for $i = 1, \ldots p-1$ and } \ 
 c_{p} = c_{1} = 0\,.
\]    
Hence the linearly independent set
\begin{equation}
\label{slpqc0basis}
 \{\, \alpha_{1} - \alpha_{\ell} \,, \,
  \alpha_{2} - \alpha_{\ell-1} \,, \,
  \cdots \, , \,
  \alpha_{p-1} - \alpha_{q+1} \,\} \,,
\end{equation}
which is orthogonal to $\Delta_0$, is a basis for $\frc_{0}$. 

Consider the orthogonal set of vectors in $\frt$
\[
 \left\{
 \begin{split}
    \bx_1 =&\  \diag[\, 1\,,\, 0\,,\, \ldots \,,\, 0\,,\, -1 \,] \,,
\\
  \vdots
\\
    \bx_p =&\ \diag[\, \underbrace{0\,,\, \ldots \,,\, 0\,,\, 1}_{p}\,,\, 
      \underbrace{0\,,\, \ldots \,,\, 0}_{q-p} \,,\, 
      \underbrace{-1\,,\, 0 \,,\, \ldots \,,\, 0}_{p} \, ] \,,
\\
    \bx_{p+1} =&\ \diag[\, \underbrace{s\,,\, \ldots \,,\, s}_{p}\,,\,
        \underbrace{-t \, , \, \ldots\, ,  -t }_{q-p} ,\, 
       \underbrace{ s\,,\, \ldots \,,\, s}_{p} \, ] \;,
\end{split}
 \right.
\]   
where $s = (q-p)/(q+p)$ and $t = 1 - s$. Here $s$ is chosen to make 
$\langle \, \alpha_{p} \mid \bx_{p+1} \, \rangle = 1$. 
These vectors are orthogonal to $\Delta_0$ and $\frc_0$. Since $\dim \fra =
\dim \frc - \dim \frc_0 = p+1$, it follows that 
$\{\bx_1, \ldots, \bx_{p+1} \}$ 
is a basis for $\fra$. Let $\{\xi_1, \ldots, \xi_{p+1} \}$ be the dual
basis for $\fra^{*}$:
\begin{equation}
\label{slpqbasis}
\begin{split}
  \xi_i &= \frac{1}{2}(\varepsilon_i - \varepsilon_{n+1-i})
  \quad\mbox{for $1 \leq i \leq p$\,,}
\\
  \xi_{p+1} &= \frac{1}{2p}(\varepsilon_1 + \cdots + \varepsilon_p) 
  - \frac{1}{q-p}(\varepsilon_{p+1} + \cdots + \varepsilon_{q}) 
   + \frac{1}{2p}(\varepsilon_{q+1} + \cdots + \varepsilon_n)\,. 
\end{split}
\end{equation}

The restricted root data are as follows (details given below--the entries in
the fourth and sixth columns are calculated using (\ref{slpqdelta}) and
(\ref{slpqshift})). In the left column (r) means {\em regular} and (s) {\em
singular}. 

\vspace{1ex}

\begin{center}

\begin{tabular}{|c|c |c| c | c | c |}
\hline
\small{r/s} & 
 \rule[-1.5ex]{0ex}{4.5ex} \small{restricted root} $\xi$  
  & \small{ mult.}
 &  $\langle \delta \mid \xi \rangle $
 & \parbox{1.5cm}{ $\#$ \small{basic  roots} $\beta$}
 & $\langle h_{\frm}^0 \mid \beta \rangle$
 \\
\hline
\small{(r)} & \rule[-1ex]{0ex}{3ex} $ \xi_i - \xi_j$ \quad 
 \small{($\scriptstyle{1\leq i < j \leq p}$)}  
 &  $2$ &  $j-i$  & $2$ & $0$
 \\
\hline
\small{(r)} & \rule[-1ex]{0ex}{3ex} $ \xi_i + \xi_j$ 
\quad \small{($\scriptstyle{1\leq i < j \leq p}$)  }
 &  $2$ &  $p+q+1-i-j$ & $2$ & $0$
 \\
\hline
\small{(s)} & \rule[-1ex]{0ex}{3ex} $ \xi_i \pm \xi_{p+1}$ \quad 
 \small{($\scriptstyle{1\leq i \leq p}$)}  
 &  $q - p$
 &  \rule[-1ex]{0ex}{3ex} $\frac{1}{2}(p+q + 1 -2i)$
 & $1$ & $-(q-p-1)$
 \\
\hline
\small{(r)} & \rule[-1ex]{0ex}{3ex}  $ 2\xi_i$ 
\quad \small{($\scriptstyle{1 \leq i \leq p}$) } 
 &   $1$  & $p+q+1 -2i$  & $1$ & $0$
\\
\hline
\end{tabular}

\end{center}

\begin{figure}[h]
\includegraphics{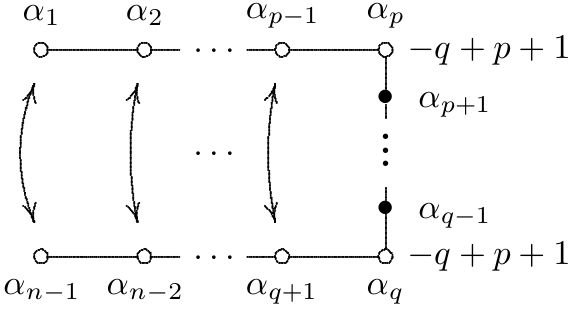} 
\caption{Marked Satake diagram for $\SL_{p+q}/\SL_p\times \SL_q$}
\label{diagaiii.fig}
\end{figure}

\vspace{1ex}

\noindent
From the table we calculate that 
\begin{equation}
\label{slpqdelta}
  \delta = \sum_{i=1}^{p} \big( n - 2i + 1 \big)\xi_i \,.
\end{equation}  
Note that $\langle \, \delta \mid \xi_{p+1} \, \rangle = 0$.

From the Satake diagram we see that $\frm' \cong \fsl_{q-p}(\bC)$ with simple
roots $\Delta_0$. Hence
\begin{equation}
\label{slpqh0}
  h_{\frm}^{0} = \sum_{i= 1}^{q-p} (q-p-2i+1) \varepsilon_{p+i}\,.
\end{equation}
Thus we have
\begin{equation}
\label{slpqshift}
 \langle  h_{\frm}^{0} \,,\, \alpha_i \rangle = 
\begin{cases} 
 -q+p+1 &\mbox{when $i = p$ or $i=q$\,,}
 \\
 0&\mbox{when $i<p$ or $i>q$\,,}
\\
 2&\mbox{when $p+1 \leq i \leq q-1$\,,}
\end{cases}
\end{equation}
which gives the markings on the Satake diagram.

The nests of positive roots, basic roots, and the dimension factors
$d_{\xi}(\lambda)$ for $\xi \in \Sigma^{+}$ and $\lambda \in \Gamma(G/H)$
are as follows (cases {\bf (i)} and {\bf (ii)} are empty when $p = 1$).

\vspace{1ex}

\begin{enumerate}

\item[{\bf (i)}] For $1 \leq i < j \leq p$  let  $\xi = \xi_i - \xi_j$.
From (\ref{slpqbasis}) we see that
$\overline{\alpha_i} = \overline{\alpha_{n - i + 1}} = \xi_i - \xi_{i+1}$ 
for $i = 1, \ldots, p-1$. Hence
\[
\begin{split}
  \Phi^{+}(\xi) &= \{ \, \varepsilon_i - \varepsilon_j  \,,\,
      \varepsilon_{n+1-j} - \varepsilon_{n+1-i} \, \}
\\
  &=  \{ \, \alpha_i + \cdots + \alpha_{j-1} \,,\,
      \alpha_{n+1-j}+\cdots + \alpha_{n-i} \, \} 
\end{split}
\]
with both roots basic.

\vspace{1ex}


\item[{\bf (ii)}] For $1 \leq i < j \leq p$ let  $\xi = \xi_i + \xi_j$.
From (\ref{slpqbasis}) we have
\[
\begin{split}
  \Phi^{+}(\xi)  &= \{ \, \varepsilon_i - \varepsilon_{n+1-j}  \,,\,
      \varepsilon_{j} - \varepsilon_{n+1-i} \, \}
\\  
  &= \{ \, \alpha_i + \cdots + \alpha_{n-j} \,,\,
      \alpha_{j} + \cdots + \alpha_{n-i} \, \} 
\end{split}
\]
with both roots basic.

From (\ref{slpqshift}) the  eigenvalues of $h_{\frm}^{0}$ on $\frn_{\xi}$
are
\[
\begin{split}
  \langle h_{\frm}^{0} \mid \alpha_i + \cdots + \alpha_{n-j} \rangle &=
   \langle h_{\frm}^{0}  \mid \alpha_p \rangle 
   + \langle h_{\frm}^{0} \mid \alpha_{p+1} + \cdots + \alpha_{q-1} \rangle 
   + \langle h_{\frm}^{0} \mid \alpha_q \rangle 
\\
  &= -(q-p-1) + 2(q-p-1) - (q-p-1) = 0\,.
\end{split}
\]
Likewise
$ \langle h_{\frm}^{0} \mid \alpha_j + \cdots + \alpha_{n-i} \rangle = 0$.

\vspace{1ex}

\item[{\bf (iii)}] For $1 \leq i  \leq p$ let $\xi = \xi_i - \xi_{p+1}$.
From (\ref{slpqbasis}) we see that 
$\overline{\alpha_{q}} = \xi_p - \xi_{p+1}$. 
Hence
\[
\begin{split}
 \Phi^{+}(\xi)  &= \{ \, \varepsilon_{p+j} - \varepsilon_{n+1-i}  \,:\,
      1 \leq j \leq q-p \, \}
\\
  &= \{ \alpha_{p+j} + \cdots + \alpha_{n-i} \,:\,
   \  1 \leq j \leq q-p  \} \,.  
\end{split}
\]
From (\ref{slpqshift})  the lowest eigenvalue
of $h_{\frm}^{0}$ on $\frn_{\xi}$ is $-q + p +1$, and the basic root is 
$ \beta = \alpha_q + \alpha_{q+1} +\cdots + \alpha_{n-i}$\,.
%

\vspace{1ex}

\item[{\bf (iv)}] For $1 \leq i  \leq p$ let $\xi = \xi_i + \xi_{p+1}$.
From (\ref{slpqbasis}) we see that 
 $\overline{\alpha_{p}} = \xi_p + \xi_{p+1}$.
Hence
\[
\begin{split}
 \Phi^{+}(\xi)  &= \{ \, \varepsilon_{i} - \varepsilon_{p+j}  \,:\,
      1 \leq j \leq q-p \, \}
\\      
  &= \{ \alpha_i + \cdots + \alpha_{p+j-1} \,:\,
 \   1 \leq j \leq q - p  \} \,. 
\end{split}
\]
From (\ref{slpqshift}) we see that the lowest eigenvalue of $h_{\frm}^{0}$ on
$\frn_{\xi}$ is $-q + p +1$, and the basic root is 
$  \beta = \alpha_i + \alpha_{i+1} + \cdots + \alpha_{p}$\,.
%

\vspace{1ex}

\item[{\bf (v)}] For $1 \leq i  \leq p$ let $\xi = 2\xi_i $. Then
by (\ref{slpqbasis}) we have
\[
 \Phi^{+}(\xi) = \{\varepsilon_i - \varepsilon_{n+1-i} \}
  = \{ \alpha_i + \cdots + \alpha_{n-i}\} \,.
\]
%

\end{enumerate}

\vspace{1ex}

\noindent
In cases (i), (ii) we have 
$  d_{\xi}(\lambda) 
 = \left[\Phi(\langle \, \lambda  \mid \xi \,\rangle \,,\,
 \langle\,  \delta \mid \xi \,\rangle)\right]^{2}$
by Proposition \ref{rhoshift.prop} (2).
In cases (iii) and (iv), since $m_{\xi} = q-p$, we conclude by
Proposition \ref{rhoshift.prop} (3) that
\[
  d_{\xi}(\lambda) =  \Phi\big(
    \langle\, \lambda \mid \xi \,\rangle \,,\,
    \langle\,  \delta \mid \xi \,\rangle \,;\,
    \textstyle{ \frac{1}{2} }(q-p-1)\big)
   \,.
\]
In case  (v) we have 
$  d_{\xi}(\lambda) = \Phi(\langle \lambda \mid \xi \rangle
   \,,\,   \langle \delta \mid \xi \rangle)$
by Proposition \ref{rhoshift.prop} (2).

\vspace{1ex}

From cases (i)--(v) and (\ref{restdim}) we obtain the full dimension formula.
Let
\[
\begin{split}
  \Xi_{0} &= \{ \xi_i \pm \xi_j \,:\, 1 \leq i < j \leq p \}\,,
\quad
 \Xi_{1} = \{ \xi_i \pm \xi_{p+1} \,:\, 1 \leq i \leq p \}\,, 
\\ 
 \Xi_{2} &= \{ 2\xi_i  \,:\, 1 \leq i \leq p \} \,.
\end{split}
\]
Then \ $\Sigma^{+}_{\rm reg} = \Xi_{0} \cup \Xi_{2}$ 
 \ and \  $\Sigma^{+}_{\rm sing} = \Xi_{1}$. 
The dimension formula is
\begin{equation}
\label{slpqdim}
\begin{split}
 d(\lambda) &= 
 \prod_{\xi \in \Xi^{+}_{0}}
\left\{ \Phi(\langle \lambda \mid \xi \rangle \,,\,
  \langle \delta \mid \xi \rangle)
\right\}^{2}  
  \prod_{\xi \in \Xi^{+}_{1}}
 \Phi\big(\langle\, \lambda \mid \xi \,\rangle \,,\,
   \langle\,  \delta \mid \xi \,\rangle 
   \,;\, \textstyle{\frac{1}{2} }(q-p-1)\big)
\\
  & \qquad \times  
 \prod_{\xi \in \Xi^{+}_{2}}
   \Phi( \langle \lambda \mid \xi \rangle \,,\,
    \langle \delta \mid \xi \rangle )
\\
  &=   
 \prod_{\xi \in \Sigma^{+}_{\rm reg}}
    W\big(\langle \lambda \mid \xi \rangle \,,\, 
  \langle \delta \mid \xi \rangle
  \,;\,  m_{\xi} \, \big)
  \prod_{\xi \in \Sigma^{+}_{\rm sing}} 
    W_{\rm sing}\big(\langle \lambda \mid \xi \rangle \,,\, 
  \langle \delta \mid \xi \rangle \,;\,  m_{\xi} \, \big)\,.
\end{split}
\end{equation}
%

%

\vspace{2ex} 
\noindent 
\textbf{\em Case 2.} {\bf The pair $(\SL_{2n+1},\, \Sp_{2n})$.} 
From \cite{Kramer} we know that $(G, H)$ is a spherical pair and $\Gamma(G/H)
= \frX_{+}(B)$. Thus every irreducible representation of $G$ has a
one-dimensional space of $H$-fixed vectors, $\Delta_{0} = \emptyset$, $\fra =
\frc = \frt$, and the restricted roots are the same as the roots. Thus every
restricted root is regular and has multiplicity one. The dimension function is
given by Weyl's formula (\ref{restdim}).

%

\vspace{2ex}
\noindent
\textbf{\em Case 3.} {\bf The pair $(\Spin_{4p+2},\, \SL_{2p+1})$. }
Let $\frg = \fso(\bC^{2\ell}, \omega)$, where $\ell = 2p+1$ is odd and
$\omega$ is the symmetric bilinear form on $\bC^{2\ell}$ with matrix $1$ on
the antidiagonal and $0$ elsewhere. Define the involution $\theta$ as in
\cite[\S 12.3, Type DIII]{Goodman-Wallach}. Then $\frg^{\theta} \cong
\fgl_{\ell}(\bC)$; we take $\frh$ to be the subalgebra corresponding to
$\fsl_{\ell}(\bC)$ under this isomorphism.
Take $\frt$ the diagonal matrices in $\frg$. Then $\bx \in \frt$ is given by
$\bx = \diag [\, \by,\, -\check{\by}\,]$, where 
$\by = [\, \varepsilon_1(\by)\,,\, \ldots \,,\, \varepsilon_{\ell}(\by)\,] $.
The roots of $\frt$ on $\frg$ are $\pm \varepsilon_i \pm \varepsilon_j$ for
$1\leq i < j \leq \ell$. For making calculations in this case it is convenient
to use the ordered basis for $\frt^{*}$
\[
 \varepsilon_1 > - \varepsilon_2 > \varepsilon_3 > 
 \cdots > -\varepsilon_{2p} > \varepsilon_{2p+1}\,, 
\]
as in \cite[\S12.3.2, Type DIII]{Goodman-Wallach}. Let $\Phi^{+}$ be the
positive roots relative to this order (obtained from the usual system of
positive roots by $\varepsilon_i \mapsto -(-1)^i\varepsilon_i$). The simple
roots in $\Phi^{+}$ are then
\begin{equation}
\label{soslsimroot}
\begin{split}
 \alpha_1 &= \varepsilon_{1} + \varepsilon_{2}\,,\; 
 \alpha_2 = -\varepsilon_2 - \varepsilon_3\,,\;
 \ldots\,,\; 
 \alpha_{2p-1} = \varepsilon_{2p-1} + \varepsilon_{2p}\,,
\\
 \alpha_{2p} &=  -\varepsilon_{2p} - \varepsilon_{2p+1}\,,\;
 \alpha_{2p+1} = -\varepsilon_{2p} + \varepsilon_{2p+1}\,.
\end{split}
\end{equation}

Let $G = \Spin_{2\ell}(\bC)$ and $H$ the connected subgroup of $G$ with Lie
algebra $\frh$. From \cite{Kramer} we know that $(G, H)$ is a spherical pair
and $\Gamma(G/H)$ is free of rank $p+1$ with generators
\[
 \mu_1 = \varpi_2\,,\, \mu_2 = \varpi_4\,,\, \ldots\,, 
 \mu_{p} = \varpi_{2p}\,,\, \mu_{p+1} = \varpi_{2p+1}
\] 
($\mu_{p}$ and $\mu_{p+1}$ are the highest weights for the half-spin
representations of $G$). Hence the support condition in Definition
\ref{excellent.def} is satisfied and 
$ \Delta_{0} = 
  \{\alpha_{1}\,,  \alpha_{3}\,, \ldots \,, \alpha_{2p -1} \}$.

Since $|\Supp \mu_i| = 1$ for $i = 1, \ldots, p+1$, Lemma \ref{rankdim.lem}
gives $\fra = \frc$. For the choice (\ref{soslsimroot}) of simple roots we see
that $\fra$ consists of all 
$\bx = \diag[\,\by, -\check{\by}\,] \in \frt$ with 
\begin{equation}
\label{sosldiag}
  \by =  [\, a_1\,, -a_1\,, \ldots, a_p\,, -a_{p}\,, a_{p+1} \,] \;.
\end{equation}
Thus an orthogonal basis for $\fra^{*}$ is given by
\begin{equation}
\label{soslbasis}
  \xi_i = \textstyle{\frac{1}{2}}(\varepsilon_{2i-1} - \varepsilon_{2i})
  \quad\mbox{for $1 \leq i \leq p$\,,}
\qquad  \xi_{p+1} = \varepsilon_{2p+1} \,.
\end{equation}
Note that $\varepsilon_i = \overline{\alpha_{2i}}$ for $1\leq i \leq p$ and
$\varepsilon_{p+1} = -\overline{\alpha_{2p}} = \overline{\alpha_{2p + 1}}$.

The restricted root data are as follows (details given below--the entries in
the fourth and sixth columns are calculated using (\ref{sosldelta}) and
(\ref{soslshift})).

\vspace{1ex}

\begin{center}

\begin{tabular}{|c|c |c| c | c | c |}
\hline
\small{r/s} & \rule[-1.5ex]{0ex}{4.5ex} \small{restricted root} $\xi$  & 
 \small{mult.} 
 & $\langle \delta \mid \xi \rangle $ 
 & \parbox{1.5cm}{ $\#$ \small{basic  roots} \ $\beta$}
 & $\langle h_{\frm}^0 \mid \beta \rangle$
 \\
\hline
 \small{(r)} &  $ \xi_i - \xi_j$ 
  \quad \small{($\scriptstyle{1\leq i < j \leq p}$)}  &  $4$ 
 &  \rule[-1ex]{0ex}{3ex}$\frac{1}{2}(j-i)$
 & $1$  & $-2$
 \\
\hline
 \small{(r)} & $ \xi_i + \xi_j$ 
 \quad \small{($\scriptstyle{1\leq i < j \leq p}$)}  &  $4$ 
 & $4p+3 - 2(i+j)$
 & $1$  & $-2$
 \\
\hline
 \small{(s)} &  $ \xi_i \pm \xi_{p+1}$ 
 \quad \small{($\scriptstyle{1\leq i \leq p}$)}  &  $2$
 &  \rule[-1ex]{0ex}{3ex}$\frac{1}{2}(4p +3 - 4i)$
 & $1$  & $-1$
 \\
\hline
 \small{(r)} &  $ 2\xi_i$ 
 \quad \small{($\scriptstyle{1 \leq i \leq p}$)}  &   $1$ 
 & $4p + 3- 4i$
 & $1$ & $0$
\\
\hline
\end{tabular}

\end{center}

%
\begin{figure}[h]
\includegraphics{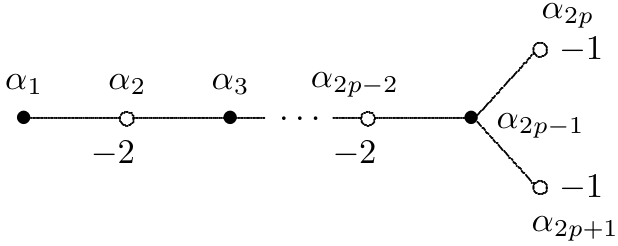} 
\caption{Marked Satake diagram for $\Spin_{4p+2}/\SL_{2p+1}$ with $p \geq 1$}
\label{diagdiii.fig}
\end{figure}
%

\vspace{1ex}

\noindent 
From the table we calculate that the element $\delta$ in this case is
\begin{equation}
\label{sosldelta}
  \delta = \sum_{i=1}^{p} \big(4(p - i) + 3 \big)\xi_i \,.
\end{equation}  
Note that $\langle \, \delta \mid \xi_{p+1} \, \rangle = 0$.

Since $\frc_0 = 0$, we see from the Satake diagram that $\frm \cong
\fsl_{2}(\bC) \times \cdots \times \fsl_{2}(\bC)$ \ ($p$ factors) and
$  
h_{\frm}^{0} = \alpha_{1} + \alpha_{3} + \cdots +  \alpha_{2p-1}
$
when we identify $\frt$ with $\frt^{*}$ using the form $\langle \cdot \mid
\cdot \rangle$. Thus
\begin{equation}
\label{soslshift}
 \langle \, h_{\frm}^{0} \,,\, \alpha_i \, \rangle =
 \begin{cases} 2 & \text{if $i = 2k-1$ for $1 \leq k \leq p$\,,} \\
               -2 & \text{if $i = 2k$ for $1 \leq k \leq p$\,,}\\
               -1 & \text{if $i = 2p$ or $i = 2p+1$\,,}
 \end{cases}               
\end{equation}
which gives the markings in the Satake diagram.

The nests of positive roots, the basic roots, and the dimension factors
$d_{\xi}(\lambda)$ for $\xi \in \Sigma^{+}$ and $\lambda \in \Gamma(G/H)$
are as follows.

\vspace{1ex}

\begin{enumerate}

\item[{\bf (i)}] For $1 \leq i < j \leq p$ let $\xi = \xi_i - \xi_j$.
Then 
\begin{align*}
 \Phi^{+}(\xi) &= \{\varepsilon_{2i-1} - \varepsilon_{2j-1}\,, 
 \varepsilon_{2i-1} + \varepsilon_{2j}\,,
  -\varepsilon_{2i} - \varepsilon_{2j-1}\,,
   -\varepsilon_{2i} + \varepsilon_{2j} \}
\\
 &= \{ \beta \,,\,\alpha_{2i-1} + \beta \,,\,\beta + \alpha_{2j-1} \,,\,
 \alpha_{2i-1} + \beta + \alpha_{2j-1} \} \,,
\end{align*}
where $\beta = \alpha_{2i} + \alpha_{2i+1} + \cdots + \alpha_{2j-2}$. From
(\ref{soslshift}) we see that $\beta$ is the basic root in $\Phi^{+}(\xi)$.


\vspace{1ex}

\item[{\bf (ii)}]  For $1 \leq i < j \leq p$ let $\xi = \xi_i + \xi_j$. 
Then  
\begin{align*}
 \Phi^{+}(\xi) &= \{\varepsilon_{2i-1} + \varepsilon_{2j-1}\,, 
 \varepsilon_{2i-1} - \varepsilon_{2j}\,,
  -\varepsilon_{2i} + \varepsilon_{2j-1}\,,
   -\varepsilon_{2i} - \varepsilon_{2j} \}
\\
 &= \{ \beta \,,\,\alpha_{2i-1} + \beta \,,\,\beta + \alpha_{2j-1} \,,\,
 \alpha_{2i-1} + \beta + \alpha_{2j-1} \} \,,
\end{align*}
where 
$
  \beta = \alpha_{2i} +  \cdots + \alpha_{2j-1}
 + 2\alpha_{2j} + \cdots + 2\alpha_{2p-1} + \alpha_{2p} + \alpha_{2p+1}
$
is the basic root.

\vspace{1ex}

\item[{\bf (iii)}] For $1 \leq i  \leq p$ let $\xi = \xi_i - \xi_{p+1}$.
Then
\[
 \Phi^{+}(\xi) = \{\varepsilon_{2i-1} - \varepsilon_{2p+1}\,, 
 -\varepsilon_{2i} - \varepsilon_{2p+1}  \}
 = \{ \beta \,,\,\alpha_{2i-1} + \beta  \} \,,
\]
where
$ \beta = \alpha_{2i} +  \cdots + \alpha_{2p}$ is the basic root.

\vspace{1ex}

\item[{\bf (iv)}]
 For $1 \leq i  \leq p$ let  $\xi = \xi_i + \xi_{p+1}$. Then
\[
 \Phi^{+}(\xi) = \{\varepsilon_{2i-1} + \varepsilon_{2p+1}\,,\,
  -\varepsilon_{2i} + \varepsilon_{2p+1} \} 
  = \{ \beta\,,\, \beta + \alpha_{2p-1} \} \,, 
\]
where
$ \beta = \alpha_{2i} +  \cdots + \alpha_{2p-1} + \alpha_{2p+1}$
is the basic root.


\vspace{1ex}

\item[{\bf (v)}] For $1 \leq i  \leq p$ let  $\xi = 2\xi_i $. Then
\[
 \Phi^{+}(\xi) = \{ \varepsilon_{2i-1} - \varepsilon_{2i}\} = \{ \beta \}\,,
\]
where 
$ \beta  = \alpha_{2i-1}
 + 2\alpha_{2i} + \cdots + 2\alpha_{2p-1} + \alpha_{2p} + \alpha_{2p+1}.
$


\end{enumerate}

\vspace{1ex}

\noindent
In cases (i) and (ii) since $k_{\xi} = 2$ and  
$m_{\xi} =  k_{\xi} +2$, Proposition \ref{rhoshift.prop} (4) gives
\[
 d_{\xi}(\lambda) = \Phi(\langle\, \lambda  \mid \xi \,\rangle \,,\,
       \langle\,  \delta \mid \xi \,\rangle) \,
  \Phi(\langle\, \lambda  \mid \xi \,\rangle \,,\, 
       \langle\,  \delta \mid \xi \,\rangle \,;\, 1 )
 \,.
\]
In cases (iii) and (iv) since $k_{\xi} = 1$ and  
$m_{\xi} =  k_{\xi} + 1$, Proposition \ref{rhoshift.prop} (3) gives
\[
 d_{\xi}(\lambda) = \Phi\big(
   \langle\, \lambda \mid \xi \,\rangle \,,\, 
  \varphi(\langle\,  \delta \mid \xi \,\rangle 
   \,;\, \textstyle{\frac{1}{2}})
  \,.
\]
In case (v) since $k_{\xi} =0$, 
Proposition \ref{rhoshift.prop} (2) gives
\[
  d_{\xi}(\lambda) = \Phi(\langle\, \lambda  \mid \xi \,\rangle 
    \,,\,
     \langle\,  \delta \mid \xi \,\rangle 
   )\,.
\]

\vspace{1ex}

From cases (i)--(v) and (\ref{restdim}) we obtain the full dimension formula.
Let
\[
\begin{split}
  \Xi^{+}_{0} &= \{ \xi_i \pm \xi_{j} \,:\, 1 \leq i < j \leq p \}\,,
\quad
  \Xi^{+}_{1} = \{ \xi_i \pm \xi_{p+1} \,:\, 1 \leq i \leq p \}\,,
\\
  \Xi^{+}_{2} &= \{ 2\xi_i  \,:\, 1 \leq i \leq p \}\,.
\end{split}
\]
Then \  $\Sigma_{\rm reg}^{+} = \Xi^{+}_{0} \cup  \Xi^{+}_{2}$ \ and \ 
 $\Sigma_{\rm sing}^{+} = \Xi^{+}_{1}$. 
The dimension formula  is
\begin{equation}
\label{sosldim}
\begin{split}
 d(\lambda) &= 
 \prod_{\xi \in \Xi^{+}_{0}}
 \Phi(\langle\, \lambda  \mid \xi \,\rangle ,   
   \langle\,  \delta \mid \xi \,\rangle) \,
  \Phi(\langle\, \lambda  \mid \xi \,\rangle ,
   \langle\,  \delta \mid \xi \,\rangle \,;\, 1 )
\\
  & \qquad \times  
  \prod_{\xi \in \Xi^{+}_{1}}
 \Phi\big(\langle\, \lambda \mid \xi \,\rangle ,
 \langle\,  \delta \mid \xi \,\rangle 
   \,;\, \textstyle{ \frac{1}{2} } \big)
 \displaystyle{ \prod_{\xi \in \Xi^{+}_{2}} }
     \Phi( \langle\, \lambda \mid \xi \,\rangle ,
     \langle\,  \delta \mid \xi \,\rangle )
\\
  &=   
 \prod_{\xi \in \Sigma^{+}_{\rm reg}}
    W\big(\langle \lambda \mid \xi \rangle , 
  \langle \delta \mid \xi \rangle,
  \,;\,  m_{\xi} \, \big)\,
 \prod_{\xi \in \Sigma^{+}_{\rm sing}} 
    W_{\rm sing}\big(\langle \lambda \mid \xi \rangle , 
  \langle \delta \mid \xi \rangle, \,;\,  m_{\xi} \, \big)\,.
\end{split}
\end{equation}
%

%

\vspace{2ex}
\noindent
 \textbf{\em Case 4.} {\bf The pair $(\Spin_{2\ell+1},\, \GL_{\ell})$. }
By \cite{Kramer} the fundamental $H$-spherical highest weights are
$\varpi_1$, $\varpi_2$, $\ldots$, $\varpi_{n-1}$, $2\varpi_{n}$.
Thus $\Gamma(G/H)$ consists of all dominant
weights with the coefficient of $\varpi_{n}$ even. Hence $\Delta_{0} =
\emptyset$, $\fra = \frt$, $\frm = 0$, and $N = U$.
The restricted roots coincide with the roots, and all root multiplicities are
$1$. Thus every restricted root is regular and has multiplicity one. The
dimension function is given by Weyl's product formula (\ref{restdim}).

%

\vspace{2ex}
\noindent
\textbf{\em Case 5.} {\bf The pair $(\Spin_{9},\, \Spin_7)$. }
With the matrix form of $\frg$ and $\frt$ chosen
as in \cite[\S 3.1]{Goodman-Wallach}, the Cartan subalgebra consists of
diagonal matrices 
$\bx = \diag[\, \by\,, 0\,,  -\check{\by}\,]$ with $\by \in \bC^4$.  
The simple roots are $\alpha_1 = \varepsilon_1 - \varepsilon_2$, 
$\alpha_2 = \varepsilon_2 - \varepsilon_3$,
$\alpha_3 = \varepsilon_3 - \varepsilon_4$, and 
$\alpha_4 = \varepsilon_4$. 
From \cite{Kramer} one knows that $(G, H)$ is a
spherical pair with fundamental $H$-spherical highest weights
$\mu_1 = \varpi_1$ and $\mu_2 = \varpi_4$.
Hence the support condition in Definition
\ref{excellent.def} is satisfied and $\Delta_{0} = \{\alpha_2, \alpha_3\}$.
We can write
\begin{equation}
\label{b4b3fundwt}
 \mu_1 = \varepsilon_1\,,
\quad 
 \mu_2 = \textstyle{ \frac{1}{2}}\big(\varepsilon_1 + \varepsilon_2 +
      \varepsilon_3 + \varepsilon_4\big) \,.
\end{equation}

Since $|\Supp \mu_i| = 1$ for $i = 1, 2$, Lemma \ref{rankdim.lem} gives $\fra
= \frc$ and $\frm \cong \fsl_{3}(\bC)$.
Using $\alpha_2 = \varepsilon_2 - \varepsilon_3$ and 
$\alpha_3 = \varepsilon_3 - \varepsilon_4$ we thus obtain
\begin{equation}
\label{b4b3diag}
 \fra = \Ker \alpha_1 \cap \Ker \alpha_2 = 
 \{ \diag[\, \by, 0, -\check{\by}\,] \,:\, 
   \by = [\, a, b, b, b \,] \} \,.
\end{equation}
We take
\begin{equation}
\label{b4b3basis}
\xi_{1} = \varepsilon_{1}
\quad\mbox{and}\quad  
\xi_2 = \textstyle{
  \frac{1}{3}}(\varepsilon_{2} + \varepsilon_{3} + \varepsilon_{4})
\end{equation}
as an orthogonal basis for $\fra^{*}$. 

The restricted root data are as follows (details given below; the fourth and
sixth columns are calculated from (\ref{b4b3delta}) and (\ref{b4b3shift})).

\vspace{1ex}

\begin{center}

\begin{tabular}{ |c |c|c| c | c | c |}
\hline
r/s & \rule[-1ex]{0ex}{3ex}  restricted root $\xi$  & mult.
 & $\langle \delta \mid \xi \rangle $ 
 & $\#$ basic roots  $\beta$ & $\langle h_{\frm}^0 \mid \beta \rangle$
\\
\hline
 (r) & $ \xi_1$  &  $1$ & $7/2$ & $1$ & $0$
 \\
\hline
 (r) & $\xi_2$ &  $3$ & $3/2$ &  $1$ & $-2$
 \\
\hline
 (r) &  $ \xi_1 - \xi_2$ & $3$ & $2$ & $1$ & $-2$
\\
\hline 
 (r) & $ \xi_1 + \xi_2$ & $3$ & $5$ & $1$ & $-2$ 
\\
\hline
 (r) &$2\xi_2$  &  $3$ & $3$ & $1$ & $-2$
 \\
\hline
\end{tabular}

\end{center}

\vspace{1ex}

\begin{figure}[h]
\includegraphics{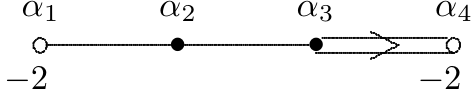} 
\caption{Marked Satake diagram for $\Spin_9/\Spin_7$}
\label{diagb4b3.fig}
\end{figure}

\noindent
From the table we calculate that 
\begin{equation}
\label{b4b3delta}
  \delta = \frac{1}{2} \big(7\xi_1 + 9\xi_2 \big) \,.
\end{equation}  

Since $\frm \cong \fsl_3$ and the positive roots of $\frm$ are $\alpha_2$,
$\alpha_3$, $\alpha_2 + \alpha_3$, we have
$  h_{\frm}^{0} = 2\rho_{\frm} =  2\alpha_{2} + 2\alpha_{3}$\,.
 Hence 
\begin{equation}
\label{b4b3shift}
 \langle \, h_{\frm}^0 \mid \alpha_i \, \rangle =
 \begin{cases} -2 & \text{if $i = 1, 4$\,,} \\
               2 & \text{if $i = 2, 3$\,,}
 \end{cases}               
\end{equation}
which gives the markings in the Satake diagram.

The nests of positive roots, the basic roots, and the dimension factors
$d_{\xi}(\lambda)$ for $\xi \in \Sigma^{+}$ and $\lambda \in \Gamma(G/H)$
are as follows.

\vspace{1ex}

\begin{enumerate}

\item[{\bf (i)}] 
Let  $\xi = \xi_1$. Then
$ \Phi^{+}(\xi) =  \{ \alpha_1 + \alpha_2 + \alpha_3 + \alpha_{4} \}$.
%


\vspace{1ex}

\item[{\bf (ii)}] Let $\xi = \xi_1 - \xi_2$. Then 
\[
\qquad \Phi^{+}(\xi) =  
   \{\varepsilon_1 - \varepsilon_i \,:\,  2 \leq i \leq  4 \}
 = \{ \beta, \, \beta+\alpha_3, \, 
  \beta+\alpha_3 + \alpha_4   \} \,,
\]  
where $\beta = \alpha_1 + \alpha_{2}$ is the basic root. 
%


\vspace{1ex}

\item[{\bf (iii)}] 
Let $\xi = \xi_2$. Then
\[
 \Phi^{+}(\xi) =  \{ \varepsilon_i \,:\,  2 \leq i \leq  4 \}
 = \{ \beta, \, \beta+\alpha_3, \, 
  \beta+\alpha_3 + \alpha_2   \} \,,
\]  
where $\beta = \alpha_{4}$ is the basic root.


\vspace{1ex}

\item[{\bf (iv)}]
Let  $\xi = 2\xi_2$. Then
\[
\qquad \Phi^{+}(\xi) =  
  \{ \varepsilon_i + \varepsilon_j \,:\,  2 \leq i < j \leq  4 \}
  = \{ \beta,\, \beta+\alpha_2,\, \beta+\alpha_2 + \alpha_3  \} \,,
\]  
where 
 $\beta = \alpha_3 + 2\alpha_{4}$ 
is the basic root.


\vspace{1ex}

\item[{\bf (v)}] 
Let $\xi = \xi_1 + \xi_2$.  Then
\[
 \Phi^{+}(\xi) =  \{ \varepsilon_1 + \varepsilon_j \,:\,  2 \leq  j \leq  4 \}
  = \{ \beta,\, \beta+\alpha_3,\, \beta + \alpha_2 + \alpha_3  \} \,,
\]  
where 
 $\beta = \alpha_1 + \alpha_2 + \alpha_3 + 2\alpha_{4}$ 
is the basic root. 

\end{enumerate}

\vspace{1ex}

\noindent
In case (i) Proposition \ref{rhoshift.prop} (2) gives
$  d_{\xi}(\lambda) = 
    \Phi(\langle\, \lambda  \mid \xi \,\rangle \,,\,
     \langle\,  \delta \mid \xi \,\rangle) $.
In cases (ii)-(iv) we have  $k_{\xi} = 2$ and  
$\dim \frn_{\xi} = k_{\xi} + 1$. Hence Proposition \ref{rhoshift.prop} (3)
gives
\[
 d_{\xi}(\lambda) = \Phi(\langle\, \lambda  \mid \xi \,\rangle \,,\, 
     \langle\,  \delta \mid \xi \,\rangle \,;\, 1 )
\]
in all these cases. Thus from (\ref{restdim}) the dimension formula is
\begin{equation}
\label{b4b3dim}
\begin{split}
 d(\lambda) &= \Phi( \langle\, \lambda\mid \xi_1 \,\rangle
     \langle\,  \delta \mid \xi_1 \,\rangle )
\,
 \Phi(\langle\, \lambda  \mid \xi_2 \,\rangle ,
  \langle\,  \delta \mid \xi_2 \,\rangle \,;\, 1 )
\\
 &\qquad \times 
 \Phi(\langle\, \lambda  \mid 2\xi_2 \,\rangle,
  \langle\,  \delta \mid 2\xi_2 \,\rangle \,;\, 1 )
 \prod_{\xi = \xi_1 \pm \xi_2}
 \Phi(\langle\, \lambda  \mid \xi \,\rangle ,
   \langle\,  \delta \mid \xi \,\rangle \,;\, 1 )
\\
  &=  \prod_{\xi \in \Sigma_0^{+}}
   W\big(\langle\, \lambda  \mid \xi \,\rangle, 
 \langle\,  \delta \mid \xi \,\rangle  \,;\, m_{\xi}\,, m_{2\xi}\,  \big)\,.
 \end{split} 
\end{equation}
Here \ 
$ \Sigma_{0}^{+} = \Sigma_{\rm reg}^{+}  
 = \{\,\xi_1\,,\, \xi_2\,,\, \xi_1 \pm \xi_2\,\}$
and there are no singular roots. However some of the factors in the dimension
formula occur for rank-one symmetric spaces, while some of the factors occur
for the rank-one non-symmetric space $\Spin_7/{\bf G}_2$.

\begin{Remark} 
{\em
 If $\lambda =
k_1\mu_1 + k_2\mu_2$, then formula (\ref{b4b3dim})  can be written as
\begin{equation}
\label{b4b3dim2}
 d(\lambda)= c \, (2k_1 + k_2 + 7) (k_2 + 3) \binom{k_2 + 5}{5}
   \prod_{j=1}^{3} (k_1 + k_2 + j + 3) \,,
\end{equation}
where the normalizing constant $c = 2/7!$. For example, when $\lambda = \mu_1$
formula (\ref{b4b3dim2}) gives $d(\varpi_1) = 9$ (the vector representation on
$\bC^9$). When $\lambda = \mu_2$, the formula gives $d(\varpi_4) = 16$ (the
spin representation).
}
\end{Remark}

%

\vspace{1ex}
\noindent
\textbf{\em Case 6.} {\bf The pair $({\Spin}_8,\, {\bf G}_{2})$. }
We take the matrix realization of $\frg$ with
Cartan subalgebra $\frt$ consisting of diagonal matrices 
$\bx = \diag[\, \by,  -\check{\by}\,]$ with $\by \in \bC^4$.  
The simple roots are $\alpha_1 = \varepsilon_1 - \varepsilon_2$, \,
$\alpha_2 = \varepsilon_2 - \varepsilon_3$, \,
$\alpha_3 = \varepsilon_3 - \varepsilon_4$, 
\, and \,
$\alpha_4 = \varepsilon_3 + \varepsilon_4$. 

From \cite{Kramer} we know that $(G, H)$ is a spherical pair and
$\Gamma(G/H)$ has generators
\begin{equation}
\label{d4g2fundwt}
\begin{split}
  \mu_1 &= \varpi_1 = \varepsilon_1\,,
\\
 \mu_2 &= \varpi_3 = 
\textstyle{ \frac{1}{2}}(\varepsilon_1 + \varepsilon_2 + \varepsilon_3 
   - \varepsilon_4)\,,
\\
  \mu_3 &= \varpi_4 = 
\textstyle{ \frac{1}{2}}(\varepsilon_1 + \varepsilon_2 + \varepsilon_3 
   + \varepsilon_4)\,.
\end{split}
\end{equation} 
Hence the support condition in Definition \ref{excellent.def} is satisfied
and $\Delta_{0} = \{\alpha_2\}$. Since $|\Supp \mu_i | = 1 $ for $i = 1,2,3$,
Lemma \ref{rankdim.lem} gives $\dim \frc = 3$ and $\fra = \frc$. 

If $\bx = \diag[\, \by, -\check{\by}\,] \in \frt$ and
$\langle \alpha_2, \bx \rangle = 0$, then 
$\by = [\, a, b, b, c \,]$. 
We take
\begin{equation}
\label{d4g2basis}
  \xi_{1} = \varepsilon_{1}\,,
\quad
  \xi_2 = \textstyle{
  \frac{1}{2}}(\varepsilon_{2} + \varepsilon_{3})\,,
\quad\mbox{and}\quad  
 \xi_3 =  \varepsilon_{4}
\end{equation}
as an orthogonal basis for $\fra^{*}$. 
The restricted root data  are as follows (details
given below--the entries in
the fourth and sixth columns are calculated using (\ref{d4g2delta}) and
(\ref{d4g2shift})).

\vspace{1ex}

\begin{center}

\begin{tabular}{|c |c |c| c | c | c |}
\hline
 r/s &  \rule[-1ex]{0ex}{3ex}  restricted root $\xi$  & mult.
  & $\langle \delta \mid \xi \rangle $ 
  & $\#$ basic roots  $\beta$ & $\langle h_{\frm}^0 \mid \beta \rangle$
 \\
\hline
(s) & $ \xi_1 - \xi_2 $\,,\,  $ \xi_2 - \xi_3$\,,\, $ \xi_2 + \xi_3 $   &  $2$
 & $3/2$  & $1$ & $-1$
 \\
\hline
(r) & $ \xi_1 - \xi_3 $\,,\, $ \xi_1 + \xi_3$\,,\, $ 2\xi_2  $   &  $1$
 & 3  & $1$ & $0$
 \\
\hline
(s) &  $\xi_1 + \xi_2$  &  $2$ &  $9/2$  & $2$ & $-1$
 \\
\hline
\end{tabular}

\end{center}

\begin{figure}[h]
\includegraphics{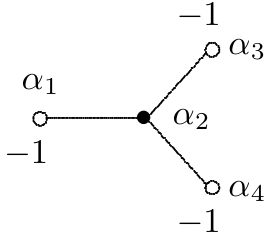} 
\caption{Marked Satake diagram for $\Spin_{8}/{\bf G}_2$}
\label{diagd4g2.fig}
\end{figure}

\noindent 
From the table we obtain
\begin{equation}
\label{d4g2delta}
  \delta = 3\xi_1 + 3\xi_2 \,.
\end{equation}  
Note that $\langle\, \delta \mid \xi_3 \,\rangle = 0$.

From the Satake diagram we see that $\frm \cong \fsl(2, \bC)$ and
$h_{\frm}^{0} = \alpha_2$. Thus
\begin{equation}
\label{d4g2shift}
 \langle \, h_{\frm}^0 \mid \alpha_i \, \rangle =
 \begin{cases} -1 & \text{if $i = 1, 3,  4$ \,,} \\
               2 & \text{if $i = 2$ \,,}
 \end{cases}               
\end{equation}
which gives the markings in the diagram. The nests of positive roots, basic
roots, and the dimension factors $d_{\xi}(\lambda)$ for $\xi \in \Sigma^{+}$
and $\lambda \in \Gamma(G/H)$ are as follows.

\vspace{1ex}

\begin{enumerate}

\item[{\bf (i)}] 
Let $\xi$ be $\xi_1 - \xi_{2}$, $\xi_2 - \xi_3$, or $\xi_2 + \xi_3$. Then 
$ \Phi^{+}(\xi) =  \{\beta \,,\, \beta + \alpha_2\}$,
where the basic root $\beta$ is $\alpha_1$, $\alpha_3$, or $\alpha_4$,
respectively. 
%


\vspace{1ex}

\item[{\bf (ii)}]  Let  $\xi = \xi_1 + \xi_2$. Then
$ \Phi^{+}(\xi) =  \{\beta \,,\, \beta + \alpha_2\}$,
where the basic root $\beta = \alpha_1 + \alpha_2 + \alpha_3 + \alpha_4$. 
%


\vspace{1ex}

\item[{\bf (iii)}] 
Let $\xi = \xi_1 - \xi_3$, \  $\xi_1 + \xi_3$, \ or $2\xi_2$.  Then
$ \Phi^{+}(\xi) =  \{\alpha_i + \alpha_j + \alpha_2\}$
with $(i,j)$ equal to $(1,3)$, \ $(2,4)$, \ or $(1,4)$, respectively. 
%

\end{enumerate}

\vspace{1ex}

\noindent
In cases (i) and (ii) we have  $k_{\xi} = 1$ and  
$m_{\xi} =  k_{\xi} + 1$, so Proposition \ref{rhoshift.prop} (3) gives
\[
 d_{\xi}(\lambda) =  \Phi\big(\langle\, \lambda \mid \xi \,\rangle 
 \,,\, \langle\, \delta \mid \xi \,\rangle 
 \,;\, \textstyle{\frac{1}{2}} \big)
\]
in these cases. In case (iii) Proposition \ref{rhoshift.prop} (2) gives
$
d_{\xi}(\lambda) = \Phi( \langle \, \lambda \mid \xi \, \rangle 
    \,,\, \langle \,  \delta \mid \xi \, \rangle)$.

\vspace{1ex}
Let 
\ $ \Sigma^{+}_{\rm reg} = 
  \{ \xi_1 \pm \xi_3 \,,\, 2\xi_2 \,  \}$ \ and \ 
$ \Sigma^{+}_{\rm sing} = \{ \xi_1 \pm \xi_2 \,,\, \xi_2 \pm \xi_3 \}$.
From (\ref{restdim}) we obtain
\begin{equation}
\label{d4g2dim}
\begin{split}
 d(\lambda) &=  \prod_{\xi \in \Sigma^{+}_{\rm reg}}
 \Phi( \langle \, \lambda \mid \xi \, \rangle ,
  \langle \,  \delta  \mid \xi \, \rangle )
 \prod_{\xi \in \Sigma^{+}_{\rm sing}}
 \Phi\big(\langle\, \lambda \mid \xi \,\rangle ,
 \langle\, \delta \mid \xi \,\rangle \,;\, \textstyle{ \frac{1}{2} } \big)
\\
 &= 
 \prod_{\xi \in \Sigma^{+}_{\rm reg}}
 W(\langle\, \lambda \mid \xi \, \rangle \,,\, 
  \langle\, \delta  \mid \xi\,\rangle   \,;\, m_{\xi}\,) 
 \prod_{\xi \in \Sigma^{+}_{\rm sing}}
 W_{\rm sing}(\langle\, \lambda \mid \xi \,\rangle \,,\, 
 \langle \delta \mid \xi \,\rangle 
  \,;\, m_{\xi}\,) \,. 
\end{split}
\end{equation}
In this formula the regular factors occur for rank-one symmetric spaces.

\begin{Remark} 
{\em
If $\lambda = k_1\mu_1 + k_2\mu_2 + k_3\mu_3$, then formula (\ref{d4g2dim})
can be written as
\begin{equation}
\label{d4g2dim2}
 d(\lambda) = c_1\, \prod_{i=1}^{3}\binom{k_i + 2}{2}
        \prod_{1\leq i < j \leq 3} (k_i + k_j + 3)
        \prod_{j=1}^{2} (k_1 + k_2 + k_3 + j + 3) \, ,
\end{equation}
where $c_1 = 1/(3^3\cdot 4 \cdot 5) $. This version clearly exhibits the
symmetry in $k_1$, $k_2$, $k_3$ that comes from
the triality outer automorphisms of $G$ associated with the symmetries
of the Dynkin diagram.   Taking $\lambda = \mu_i$ ($i = 1, 2,
3$) in (\ref{d4g2dim2}) gives $d(\lambda) = 8$ (the vector and the two
half-spin representations), whereas taking $\lambda = \mu_2 + \mu_3$ gives
$d(\lambda) = 56 = \binom{8}{3}$ (the representation of $\SO_{8}(\bC)$ on
$\bigwedge^3 \bC^8$).
}
 \end{Remark}

%

\vspace{1ex}
\noindent
\textbf{\em Case 7.} 
{\bf The pair $(\Sp_{2\ell},\, \bC^{\times} \times \Sp_{2\ell-2})$. }
We take $G$ in the matrix form of \cite[\S
2.1.2]{Goodman-Wallach}, with Cartan subalgebra $\frt$ the diagonal matrices
$\bx = \diag [\, \by, -\check{\by}]$, where 
$\by = [\, \varepsilon_1(\by), \ldots, \varepsilon_{\ell}(\by)\,]$
and 
$\check{\by} =  [\, \varepsilon_{\ell}(\by), \ldots, \varepsilon_{1}(\by)\,]$.
The roots of $\frt$ on $\frg$ are $\pm \varepsilon_i \pm \varepsilon_j$ for
$1\leq i < j \leq \ell$ and $\pm 2\varepsilon_i$ for $1 \leq i \leq \ell$.
Take the simple roots as $\alpha_i = \varepsilon_{i} - \varepsilon_{i+1}$ for
$i=1, \ldots, \ell-1$ and $\alpha_{\ell} = 2\varepsilon_{\ell}$.

From \cite{Kramer} we know that $(G, H)$ is a spherical pair and
$\Gamma(G/H)$ is free of rank $2$ with generators
\begin{equation}
\label{cglcfundwt}
  \mu_1 = 2\varpi_1 = 2\varepsilon_1 \,,
\quad
  \mu_2 = \varpi_2 = \varepsilon_1 + \varepsilon_2 \,.
\end{equation}
Hence the support condition in Definition \ref{excellent.def} is satisfied. If
$\ell = 2$, then $\Delta_{0}$ is empty, $\frc = \frt$, and $\lambda =
k_1\varpi_1 + k_2\varpi_2$ is a spherical highest weight if and only $k_1$ is
even.\footnote{
This is the same as Case 5 since $\Spin_{5} \cong \Sp_4$ and
$\bC^{\times}\times \Sp_2 \cong \GL_2$.}
Thus we may assume that $\ell \geq 3$ in the following.
Then  $\Delta_{0} = \{\alpha_{3}, \ldots, \alpha_{\ell} \}$ and
\begin{equation}
\label{cglcdiag}
 \frc = \{ \bx = \diag[\, \by, -\check{\by} \,]
 \ \mbox{with \ $\by = [\xi_1, \xi_2, 0, \ldots, 0 ]$ }  \} \,.
\end{equation}
Since $|\Supp \mu_i | = 1$ for $i = 1,2$, we know from Lemma \ref{rankdim.lem}
that  $\fra = \frc$ and $\frm = \fsp_{2\ell-4}(\bC)$.
The restricted root data are as follows (details given below--the entries in
the fourth and sixth columns are calculated using (\ref{cglcdelta}) and
(\ref{cglcshift1})).

\vspace{1ex}

\begin{center}

\begin{tabular}{|c |c |c| c| c | c |}
\hline
 r/s & \rule[-1ex]{0ex}{3ex} restricted root $\xi$ & mult.
  & $\langle \delta \mid \xi \rangle $ 
  & $\#$ basic roots  $\beta$ & $\langle h_{\frm}^0 \mid \beta \rangle$
 \\
\hline
 (r) & $ \xi_1 -  \xi_2$  &  $1$ &  $1$ & $1$ & $0$
 \\
\hline
 (r) & $ \xi_1 + \xi_2$  &  $1$ & $2\ell - 1$ & $1$ & $0$
 \\
\hline
 (s) & $ \xi_i$ \quad ($i = 1, 2$)  &  $2\ell - 4$ & $\ell + 1 - i$ 
   & $1$ & $-2\ell + 5$
 \\
\hline
 (s) & $2\xi_i$ \quad ($i = 1, 2$) &   $1$ &  $2(\ell + 1 - i)$ 
   & $1$ & $0$ 
\\
\hline
\end{tabular}

\end{center}

\begin{figure}[h]
\includegraphics{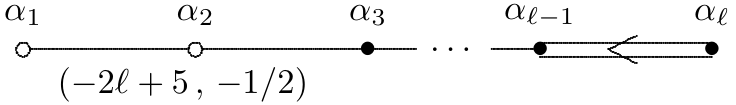} 
\caption{Marked Satake diagram for $\Sp_{2\ell}/(\bC^{\times}\times \Sp_{2\ell-2})$}
\label{diagcglc.fig}
\end{figure}

\noindent 
From the table we obtain
\begin{equation}
\label{cglcdelta}
  \delta = \ell\xi_1 + (\ell - 1)\xi_2 \,.
\end{equation}  

In this case $\Delta_0$ is of type $C_{\ell-2}$ and has two root lengths when
$\ell \geq 4$. We have $\alpha_{\ell}^\vee = \varepsilon_{\ell}$. With the
choice of positive roots for $\frg$ above, we have 
$\rho_{\frm} = 
 (\ell-2)\varepsilon_3 +  (\ell-3)\varepsilon_4 + \cdots 
 + \varepsilon_{\ell}$ 
\ and
$ h_{\frm}^0 =
   (2\ell-5)\varepsilon_3 +   (2\ell-7)\varepsilon_4 + \cdots 
   + \varepsilon_{\ell}$. 
Thus
\[
 \varpi_{\frm}^0 = \rho_{\frm} - \frac{1}{2}h_{\frm}^0
 =  \frac{1}{2}\big(\varepsilon_3 + \ldots + \varepsilon_{\ell}\big)
\]
From these formulas we see that
\begin{equation}
\label{cglcshift1}
 \langle\, h_{\frm}^0 \mid \alpha_i \,\rangle =
 \begin{cases} 0 & \text{if $i = 1$\,,} \\
               -2\ell + 5 & \text{if $i = 2$\,,}
 \end{cases}               
\end{equation}
and
\begin{equation}
\label{cglcshift2}
 \langle \, \varpi_{\frm}^0 \mid \alpha_i \,\rangle =
 \begin{cases} 0 & \text{for $i = 1$ and $3 \leq i \leq  \ell - 1$\,,} \\
               -1/2 & \text{if $i = 2$\,,}\\
               1 & \text{if $i = \ell$\,,}
 \end{cases}               
\end{equation}
as in (\ref{cpishift}). This furnishes the markings in the Satake diagram.

The nests of positive roots and the dimension factors $d_{\xi}(\lambda)$
for $\xi \in \Sigma^{+}$ and $\lambda \in \Gamma(G/H)$ are as follows.

\vspace{1ex}

\begin{enumerate}

\item[{\bf (i)}] Let $\xi = \xi_1 - \xi_2$. Then
$\Phi^{+}(\xi) = \{ \alpha_1 \}$.
%


\vspace{1ex}

\item[{\bf (ii)}]  Let $\xi = \xi_1 + \xi_2$. Then
$ \Phi^{+}(\xi) = \{ \beta \}$, where 
$\beta = \alpha_1 + 2\alpha_{2} + \cdots + 2\alpha_{n-1} + \alpha_{\ell}$.
%


\vspace{1ex}

\item[{\bf (iii)}] Let $\xi = \xi_i$ with  $i = 1,2$. Then
\[
 \Phi^{+}(\xi) =  
  \{ \varepsilon_i \pm \varepsilon_j  \,:\, 3 \leq j \leq  \ell \}
   = \{ \beta_j \,,\,  \gamma_j  \,:\,  3 \leq j \leq  \ell \}\,.
\]
Here
$ \beta_j = \beta + \alpha_{3} +  \cdots + \alpha_{j-1}$
and
$\gamma_j = \beta_{j}
    + 2\alpha_{j} +  \cdots + 2\alpha_{n-1} + \alpha_{\ell}$, 
with the basic root 
$ \beta = \alpha_1 + \alpha_2$ when $i= 1$ and $\beta = \alpha_2$ when $i=2$.
(In these formulas $\beta_3 = \beta$ and $\gamma_{\ell} = \beta_{\ell} +
\alpha_{\ell}$.)
From (\ref{cglcshift1})  we see that
$ \langle\, h_{\frm}^0\,,\, \beta  \,\rangle = -2\ell+5$.
Since $\dim \frn_{\xi} = 2\ell - 4$, it follows from Proposition
\ref{rhopishift.prop} that the eigenvalues of $\ad h_{\frm}^0$ on $\frn_{\xi}$
are precisely
\[
   -2\ell + 5 \,,\, \ldots \,,\, -1 \,,\, 1 \,,\, \ldots \,,\, 2\ell-5 
\]
with multiplicity one. The negative eigenvalues come from the roots $\beta_j =
\varepsilon_i - \varepsilon_j$, while the positive eigenvalues come from the
roots $\gamma_j = \varepsilon_i + \varepsilon_j$. Since 
$\rho_{\frm} = (1/2)h_{\frm}^0 + \varpi_{\frm}^0$ \ and \ 
$ \langle\, \varpi_{\frm}^0\,,\, \beta_j \,\rangle = -1/2$\,, \ 
$ \langle\, \varpi_{\frm}^0\,,\, \gamma_j \,\rangle = 1/2$ \ 
for $3 \leq j \leq \ell$, it follows that the shifts  
$\langle\,\rho_{\frm} \mid \alpha\,\rangle$ 
in  $d_{\xi}(\lambda)$ are
\[
 -\ell + 2 \,,\,  \ldots \,,\,-2\,,\, -1 \,,\, 
   1 \,,\, 2\,\, \ldots \,,\,  \ell - 2
\]
(an arithmetic progression of step one with a gap at zero\footnote{A direct
calculation of these shifts from the formula for $\rho_{\frm}$ does not
explain the gap at zero.}). Hence
\[
 d_{\xi}(\lambda) = \frac{ 
 \Phi(\langle\, \lambda  \mid \xi \,\rangle \,,\,
  \langle\, \delta \mid \xi \,\rangle \,;\, \ell - 2)
 }{
 \Phi(\langle\, \lambda  \mid \xi \,\rangle \,,\,
  \langle\,  \delta \mid \xi \,\rangle )
 }\,,
\]
where the  factor in the denominator creates the
gap at zero.


\vspace{1ex}
\item[{\bf (iv)}] Let $\xi = 2\xi_i$ with  $i = 1, 2$. Then 
$ \Phi^{+}(\xi) = \{ \beta \}$, where 
$\beta =2\alpha_{i} + \cdots + 2\alpha_{\ell-1} + \alpha_{\ell}$.
%


\end{enumerate}

\vspace{1ex}

\noindent
In cases (i), (ii), and (iv)  Proposition \ref{rhopishift.prop} gives
$ d_{\xi}(\lambda) =  \Phi(\langle \lambda  \mid \xi \rangle \,,\,
 \langle \delta \mid \xi \rangle)$.
Combining this with the formula from case (iii) and using (\ref{restdim}), we
can obtain the full dimension formula. 
Let
\ $ \Sigma^{+}_{\rm reg} = \{ \xi_1 + \xi_2 \,,\, \xi_1 - \xi_{2}  \}$ 
\ and \
$\Sigma^{+}_{\rm sing} = \{ \xi_1 \,,\,  \xi_2  \}$. \ 
Then
\begin{equation}
\label{cglcdim}
\begin{split}
 d(\lambda) &=  \prod_{\xi \in \Sigma^{+}_{\rm reg} }
 \Phi(\langle\, \lambda  \mid \xi \,\rangle ,
  \langle\,  \delta \mid \xi \,\rangle )
\prod_{\xi \in \Sigma^{+}_{\rm sing}}
 \Phi(\langle\, \lambda \mid \xi \,\rangle ,
  \langle\, \delta \mid \xi \,\rangle \,;\, \ell - 2)
\\
 &= 
 \prod_{\xi \in \Sigma^{+}_{\rm reg}}
 W\big(\langle\, \lambda \mid \xi \, \rangle \,,\, 
  \langle\, \delta  \mid \xi\,\rangle  \,;\, m_\xi \,\big)
\\
 &\qquad \times
 \prod_{\xi \in \Sigma^{+}_{\rm sing}}
 W_{\rm sing}\big(\langle\, \lambda \mid \xi \,\rangle \,,\, 
 \langle \delta \mid \xi \,\rangle 
  \,;\, m_{\xi}\,, m_{2\xi} \,\big) \,. 
\end{split}
\end{equation}
In this formula the regular dimension factors occur for rank-one symmetric
spaces. Note that the denominator for $\xi_i$ in case (iii) is cancelled by
the dimension factor for $2\xi_i$ from case (iv).

\begin{Remark}
{\em
 If $\lambda = k_1\mu_1 + k_2\mu_2$, then formula (\ref{cglcdim}) can be
written as
\begin{equation}
\label{cglcdim2}
 d(\lambda) = c\,
 (2k_1 + 1)(2k_1 + 2k_2+2\ell-1)
      \binom{k_2 + 2\ell - 3}{2\ell-3}\binom{2k_1 + k_2 + 2\ell - 2}{2\ell-3} \, ,
\end{equation}
where $c = 1/[(2\ell-1)(2\ell-2)]$ is the normalizing constant to make $d(0) =
1$. Taking $\lambda = \mu_1$ gives $d(\mu_1) = 2\ell(2\ell+1)/2$. Here $\mu_1$
is the highest weight of the irreducible $G$-module $S^2(\bC^{2\ell})$ (see
\cite[\S10.2.3, Example 2]{Goodman-Wallach}). Taking $\lambda = \mu_2$ gives
$d(\mu_2) = (2\ell+1)(\ell - 1) = \binom{2\ell}{2} - \binom{2\ell}{0}$. 
In this case $\mu_2$ is the highest weight of the traceless 
({\em harmonic}) subspace in 
$\rule{0ex}{2.5ex}\bigwedge^2 \bC^{2\ell}$ (see \cite[Cor.
5.5.17]{Goodman-Wallach}).
}
\end{Remark}


%

\vspace{1ex}
\noindent
\textbf{\em Case 8.} {\bf The pair $({\bf E}_6,\, \Spin_{10})$. }
We take the root system and the simple roots $\alpha_1\,,\, \ldots \,,\,
\alpha_6$ for $\frg$ as in \cite[Planche V]{Bourbaki3}. We identify the Cartan
subalgebra $\frt$ with the vectors $\bx \in \bC^8$ such that
$ \langle \varepsilon_6 \mid \bx \rangle 
  = \langle \varepsilon_7 \mid \bx \rangle
 = -\langle \varepsilon_8 \mid \bx \rangle$.

From \cite{Kramer} we know that $(G, H)$ is
a spherical pair and $\Gamma(G/H)$ has generators
\begin{equation}
\label{e6d5fundwt}
\begin{split}
  \mu_1 &= \varpi_1 
   = \frac{2}{3}\big(\varepsilon_8 - \varepsilon_7 - \varepsilon_6\big)\,,
\\
 \mu_2 &= \varpi_2 =  \frac{1}{2}
 \big(\varepsilon_1 + \varepsilon_2 + \varepsilon_3 + \varepsilon_4 
 + \varepsilon_5 - \varepsilon_6 - \varepsilon_7 + \varepsilon_8 \big)\,,
\\
  \mu_3 &= \varpi_6 = 
 \frac{1}{3}\big(\varepsilon_8 - \varepsilon_7 - \varepsilon_6\big)
  + \varepsilon_5 
\end{split}
\end{equation} 
corresponding to the endpoints of the Dynkin diagram. Hence the support
condition in Definition \ref{excellent.def} is satisfied and $\Delta_{0} =
\{\alpha_3, \alpha_4, \alpha_5 \}$, where
$\alpha_3 = \varepsilon_2 - \varepsilon_1$\,,\,  
$\alpha_4 = \varepsilon_3 - \varepsilon_2$\,,\, 
$\alpha_5 = \varepsilon_4 - \varepsilon_3$\,.
Since $|\Supp \mu_i | = 1$ for $i = 1,2$, Lemma \ref{rankdim.lem}
gives  $\fra = \frc$.

Let $\bx \in \frt$. Then $\langle \alpha_i \mid \bx \rangle = 0$ for $i = 3,
4, 5$ if and only if
\begin{equation}
\label{e6d5splittor}
  \bx = 
 \gamma_1(\varepsilon_1 + \varepsilon_2 + \varepsilon_3 + \varepsilon_4)
 +  \gamma_2 \varepsilon_5 + 
  \gamma_3(-\varepsilon_6 - \varepsilon_7 + \varepsilon_8) \,,
\end{equation}
with  $\gamma_i \in \bC$.
Thus $\fra $ consists of all $\bx$ in (\ref{e6d5splittor}).

Let $\xi_1 = \alpha_1|_{\fra}$\,, \ $\xi_2 = \alpha_2|_{\fra}$\,, \ $\xi_3 =
\alpha_6|_{\fra}$\,. By \cite[Planche V]{Bourbaki3} these roots are given by
$ \alpha_1 = \frac{1}{2}
\big(\varepsilon_1 - \varepsilon_2 - \varepsilon_3 - \varepsilon_4
 - \varepsilon_5 - \varepsilon_6 - \varepsilon_7 +  \varepsilon_8\big)$, \ 
$ \alpha_2 = \varepsilon_1 + \varepsilon_2$\,, \ and \ 
$ \alpha_6 = \varepsilon_5 - \varepsilon_4$\,.   
Hence from (\ref{e6d5splittor}) we calculate that 
\begin{equation}
\label{e6d5restroot}
\begin{split}
 \xi_1 &= 
 -\frac{1}{4}\big(\varepsilon_1 + \varepsilon_2 + \varepsilon_3 
  + \varepsilon_4\big)
 - \frac{1}{2}\big(\varepsilon_5 + \varepsilon_6 
   + \varepsilon_7 - \varepsilon_8\big) \,,
\\
  \xi_2 &= \varepsilon_5 \,,
\qquad
\xi_3 = \frac{1}{3}\big(\varepsilon_6 + \varepsilon_7 - \varepsilon_8\big) \,.
\end{split}
\end{equation}
The restricted root data are as follows (details given below--the entries in
the fourth and sixth columns are calculated using (\ref{e6d5delta}) and
(\ref{e6d5shift})).

\vspace{1ex}

\begin{center}

\begin{tabular}{|c |c |c| c | c | c |}
\hline
 r/s & \rule[-1ex]{0ex}{3ex} restricted root $\xi$  & mult.
  & $\langle \delta \mid \xi \rangle $ 
  & $\#$ basic roots  $\beta$ & $\langle h_{\frm}^0 \mid \beta \rangle$
 \\
\hline
 (s) & $ \xi_1$\,,\, $ \xi_3$ &  $4$ &  $5/2$ & 1 & $-3$ 
 \\
\hline
 (s) & $ \xi_1 + \xi_2 $\,,\, $ \xi_2 + \xi_3 $ &  $4$ &  $11/2$ 
   & $1$ & $-3$
 \\
\hline
 (r) & $ \xi_2$  &  $6$ & $3$ & $1$ & $-4$
 \\
\hline
 (r) & $ \xi_1 + \xi_2 + \xi_3 $  &  $6$ & $8$ & $1$ & $-4$
 \\
\hline
 (r) & $ \xi_1 + \xi_3 $  &  $1$ & $5$ & $1$ & $0$
 \\
\hline
 (r) & $ \xi_1 + 2\xi_2 + \xi_3 $ &  $1$ & $11$  & $1$ & $0$
 \\
\hline
\end{tabular}

\end{center}

\begin{figure}[h]
\includegraphics{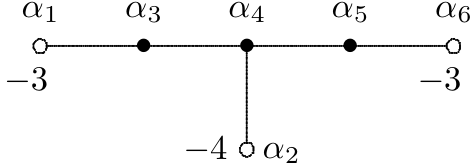} 
\caption{Marked Satake diagram for ${\bf E}_6/\Spin_{10}$}
\label{diageiii.fig}
\end{figure}

\noindent
From the table we obtain
\begin{equation}
\label{e6d5delta}
   \delta = 8\xi_1 + 11\xi_2 + 8\xi_3 \,.
\end{equation}
Since $\frm \cong \fsl(4,\bC)$ with simple roots $\{\alpha_3 \,,\, \alpha_4
\,,\, \alpha_5 \}$, we have
$ h_{\frm}^0 = 3\alpha_3 + 4\alpha_4 + 3\alpha_5$.
Hence 
\begin{equation}
\label{e6d5shift}
 \langle \, h_{\frm}^0 \mid \alpha_i \, \rangle =
 \begin{cases}
         -3 & \text{if $i = 1, \, 6$\,,} \\
         -4 & \text{if $i = 2$\,,}  \\
          2 & \text{if $i = 3, \, 4, \, 5$\,,}
 \end{cases}               
\end{equation}
which gives the markings in the Satake diagram.

The nests of positive roots and the dimension factors $d_{\xi}(\lambda)$ for
$\xi \in \Sigma^{+}$ and $\lambda \in \Gamma(G/H)$ are as follows. We
describe the positive roots in terms of their coefficients relative to the
simple roots, as in \cite[Planche V]{Bourbaki3}.

\vspace{1ex}

\begin{enumerate}

\item[{\bf (i)}] For the restricted root $ \xi_1$ the nest is
\[
\qquad
 \Phi^{+}(\xi_1) = \Big\{  {10000 \atop 0}   \quad  
  {11000 \atop 0} \quad  
  {11100 \atop 0} \quad  
  {11110 \atop 0} \Big\}
\]
and the basic root is $\beta = \alpha_1$.
For the restricted root $\xi_3$ the nest is
\[
\qquad
 \Phi^{+}(\xi_3) = \Big\{ 
  {00001 \atop 0}   \quad  
  {00011 \atop 0} \quad  
  {00111 \atop 0} \quad  
  {01111 \atop 0} 
  \Big\}
\]
and the basic root is $\beta = \alpha_6$.
For the restricted root $\xi_1 + \xi_2$ the nest is
\[
\qquad
 \Phi^{+}(\xi_1 + \xi_2) = \Big\{ 
  {11100 \atop 1}   \quad  
  {11110 \atop 1} \quad  
  {11210 \atop 1} \quad  
  {12210 \atop 1}
 \Big\}
\]
and the basic root is $\beta = \alpha_1 + \alpha_2 + \alpha_3 + \alpha_4$.
For the restricted root $\xi_2 + \xi_3$ the nest is 
\[
\qquad
 \Phi^{+}(\xi_2 + \xi_3) = \Big\{ 
  {00111 \atop 1}   \quad  
  {01111 \atop 1} \quad  
  {01211 \atop 1} \quad  
  {01221 \atop 1}
 \Big\}
\]
and the basic root is $\beta = \alpha_2 + \alpha_4 + \alpha_5 + \alpha_6$. 
When $\xi$ is any of these restricted roots and $\beta$ is the corresponding
basic root, then from (\ref{e6d5shift}) we calculate that $\langle \,
h_{\frm}^0 \mid \beta \,\rangle = -3$. Since $m_{\xi} = k_{\xi} + 1$
in all these cases, Proposition \ref{rhoshift.prop} (3) gives 
\[
 d_{\xi}(\lambda) = \Phi\big(\langle\, \lambda  \mid \xi \,\rangle 
   \,,\,  \langle\, \delta \mid \xi \,\rangle 
   \,;\,  \textstyle{\frac{3}{2}} \big) 
 \,.
\]
%


\vspace{1ex}

\item[{\bf (ii)}] For the restricted root $\xi_2$ the nest is
\[
\qquad \quad
 \Phi^{+}(\xi_2) = \Big\{ 
  {00000 \atop 1}   \quad  
  {00100 \atop 1} \quad  
  {01100 \atop 1} \quad  
  {00110 \atop 1} \quad  
  {01110 \atop 1} \quad  
  {01210 \atop 1}
 \Big\}
\]
and the basic root is $\beta = \alpha_2$.
For the restricted root $\xi_1 + \xi_2 + \xi_3$ the nest is
\[
\qquad \quad 
 \Phi^{+}(\xi_1 + \xi_2 + \xi_3) = \Big\{ 
  {11111 \atop 1}   \quad  
  {11211 \atop 1} \quad  
  {12211 \atop 1} \quad  
  {11221 \atop 1} \quad
  {12221 \atop 1} \quad
  {12321 \atop 1} 
  \Big\}
\]
and the basic root is 
$\beta = \alpha_1 + \alpha_2 + \alpha_3 + \alpha_4 + \alpha_5 + \alpha_6$.
When $\xi$ is either of these restricted roots and $\beta$ is the
corresponding basic root, then from (\ref{e6d5shift}) we calculate that
$\langle \, h_{\frm}^0 \mid \beta \,\rangle = -4$. Since $m_{\xi} =
k_{\xi} + 2$ in both cases, Proposition \ref{rhoshift.prop} (4) gives 
\[
d_{\xi}(\lambda) = \Phi(\langle\, \lambda  \mid \xi \,\rangle \,,\,
   \langle\,  \delta \mid \xi \,\rangle) 
   \,
   \Phi(\langle\, \lambda  \mid \xi \,\rangle  \,,\, 
      \langle\,  \delta \mid \xi \,\rangle  \,;\, 2)
    \,.
\]
%


\vspace{1ex}

\item[{\bf (iii)}]
For $\xi = \xi_1 + \xi_3$  the nest is
$\displaystyle{ \Phi^{+}(\xi) = \Big\{  {11111 \atop 0} \Big\} }$,
while for $\xi = \xi_1 + 2\xi_2 + \xi_3$ the nest is
$\rule{0ex}{3.5ex}
\displaystyle{\Phi^{+}(\xi) = \Big\{  {12321 \atop 2} \Big\} }$.
Hence 
$ d_{\xi}(\lambda) = \Phi(\langle\, \lambda  \mid \xi \,\rangle \,,\,
   \langle\,  \delta \mid \xi \,\rangle)$
by Proposition \ref{rhoshift.prop} (2).


\end{enumerate}

\vspace{1ex}

\noindent Let
\[
 \Xi^{+}_{0} = \{ \xi_2 \,,\, \xi_1 + \xi_2 + \xi_3 \}, \ 
 \Xi^{+}_{1} = 
\{ \xi_1\,,\, \xi_3 \,,\, \xi_1 + \xi_2 \,,\, \xi_2 + \xi_3 \}, \ 
  \Xi^{+}_{2} = \{ \xi_1 + \xi_3 \,,\, \xi_1 + 2\xi_2 + \xi_3 \}.
\]
Then from cases (i)--(iii)  we see that
\  $\Sigma_{\rm reg}^{+} = \Xi^{+}_{0} \cup  \Xi^{+}_{2}$ \ and \ 
 $\Sigma_{\rm sing}^{+} = \Xi^{+}_{1}$. 
Furthermore, the dimension formula  is
\begin{equation}
\label{e6d5dim}
\begin{split}
 d(\lambda) &=
\prod_{\xi \in \Xi^{+}_{0}}
  \Phi(\langle\, \lambda \mid \xi \,\rangle \,,\,
   \langle\,  \delta \mid \xi \,\rangle ) \,
  \Phi(\langle\, \lambda  \mid \xi \,\rangle \,,\,
   \langle\,  \delta \mid \xi \,\rangle  \,;\, 2)
\\
 & \qquad \times  
 \prod_{\xi \in \Xi^{+}_{1}}
   \Phi\big(\langle\, \lambda \mid \xi \,\rangle \,,\,
   \langle\, \delta \mid \xi \,\rangle \,;\, \textstyle{ \frac{3}{2}} \big)
 \displaystyle{  \prod_{\xi \in \Xi^{+}_{2}} }
  \Phi(\langle\, \lambda \mid \xi \,\rangle \,,\,
       \langle\, \delta \mid \xi \,\rangle )
\\
  &=   
 \prod_{\xi \in \Sigma^{+}_{\rm reg}}
    W\big(\langle \lambda \mid \xi \rangle \,,\, 
  \langle \delta \mid \xi \rangle  \,;\,  m_{\xi} \, \big)\,
 \prod_{\xi \in \Sigma^{+}_{\rm sing}} 
    W_{\rm sing}\big(\langle \lambda \mid \xi \rangle \,,\, 
  \langle \delta \mid \xi \rangle \,;\,  m_{\xi} \,  \big)\,.
\end{split}
\end{equation}

\vspace{1ex}

\begin{Remark}
{\em
If $\lambda = k_1\mu_1 + k_2\mu_2 + k_3\mu_3$, then using
(\ref{e6d5fundwt}), (\ref{e6d5restroot}), and the values of $\langle\, \delta
\mid \xi \,\rangle$ given in the table, we can write formula (\ref{e6d5dim})
as
\begin{equation}
\label{e6d5dim2}
\begin{split}
 d(\lambda) &=   c  \,
  (k_2+3)(k_1 + k_3 + 5)(k_1 + k_2 + k_3 + 8)(k_1 + 2k_2 + k_3 + 11)
\\
  &\qquad \times 
   \prod_{j=1}^{4} (k_1 + j)(k_3 + j)(k_1 + k_2 + j + 3)(k_2 + k_3 + j + 3) 
\\
  &\qquad \times 
     \prod_{j=1}^{5} (k_2 + j)(k_1 + k_2 + k_3 + j + 5) \, , 
\end{split}
\end{equation}
where $c$ is the normalizing constant to make $d(0) = 1$. Thus the dimension
formula is symmetric in $k_1$ and $k_3$ (this is evident {\em a priori} from
the outer automorphism of $G$ associated with the Dynkin diagram symmetry).
Taking $\lambda = \mu_1$ or $\mu_3$, we calculate that $d(\lambda) = 27$ (the
two mutually contragredient representations of $G$ on the exceptional simple
Jordan algebra). Taking $\lambda = \mu_2$ (the highest root of $\frg$), we
calculate that $d(\lambda) = 78$ (the adjoint representation of $G$).
}
\end{Remark}

\section{Higher Rank Symmetric Spaces}
           \label{symspaceexam.sec}
%

We now turn to the proof of Theorem \ref{symspacedim.thm} for an irreducible
symmetric pair $(G, K)$ of rank $r \geq 2$ that is the complexification of a
compact symmetric space of type {\bf I} (in the terminology of \cite[Ch. VIII,
\S 5]{Helgason1}). We use the same case-by-case method as for the higher rank
nonsymmetric spherical pairs. However, there is a significant simplification:
due to the Weyl group symmetry of the restricted root system it suffices to
consider the root nests and dimension factors for a set of simple roots.

\begin{Lemma}
\label{regsimple.lem}
Assume that the Dynkin diagram of $\,\frm$ is simply laced.
If the dimension factors satisfy
\begin{equation}
\label{regdimfact}
 d_{\xi}(\lambda) d_{2\xi}(\lambda) =  
   W\big(\langle \lambda \mid \xi\rangle , \langle \delta \mid \xi \rangle
   \;\,;\, m_{\xi}, m_{2\xi}\big)
\quad \mbox{when $\lambda \in \Gamma(G/K)$}
\end{equation}
for each  simple indivisible restricted root $\xi$, then 
\,{\rm (\ref{regdimfact})}\, holds for all $\xi \in \Sigma_{0}^{+}$.
Here $d_{2\xi}(\lambda) = 1$ if $2\xi$ is not a restricted root.
\end{Lemma}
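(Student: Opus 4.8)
The plan is to exploit the Weyl group $W$ of the restricted root system $\Sigma$, which acts on $\fra^*$ and permutes the indivisible restricted roots transitively within each $W$-orbit; since every $\xi\in\Sigma_0^+$ is $W$-conjugate to a simple restricted root, it suffices to show that the quantity $d_\xi(\lambda)d_{2\xi}(\lambda)$ is a $W$-invariant function of the data attached to $\xi$. Concretely, I would first recall from Lemma \ref{rootnest.lem} that if $w\in W_G$ satisfies $w\alpha=\beta$ and $w\Delta_0=\Delta_0$, where $\alpha,\beta$ are the (unique, in the symmetric case) basic roots of the nests $\Phi^+(\xi),\Phi^+(\eta)$, then $w\Phi^+(\xi)=\Phi^+(\eta)$ and the shifts $\langle\rho_\frm\mid\mu\rangle$ are preserved under $w$. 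The key point is that for a symmetric space an element $w$ of the little Weyl group sending $\xi$ to a simple restricted root lifts to such a $w\in W_G$ fixing $\Delta_0$ as a set: this is the standard fact that the little Weyl group of the restricted root system is realized by elements of $W_G$ normalizing $\fra$, hence (after choosing representatives in the normalizer of $T$) normalizing $\Psi$ and permuting $\Delta_0$ up to $W_\frm$; composing with an element of $W_\frm$ one can fix $\Delta_0$ pointwise-as-a-set.

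Given this, the second step is purely formal. By (\ref{weyldimfact}), $d_\xi(\lambda)$ is a product over $\alpha\in\Phi^+(\xi)$ of factors depending only on $\langle\lambda\mid\xi\rangle$, $\langle\delta\mid\xi\rangle$, and the multiset of shifts $\{\langle\rho_\frm\mid\alpha\rangle:\alpha\in\Phi^+(\xi)\}$. Since $w$ preserves the Killing form, $\langle\lambda\mid w\xi\rangle=\langle w^{-1}\lambda\mid\xi\rangle$; but $\lambda\in\Gamma(G/K)$ restricts to a $W$-dominant element of $\fra^*$ and, more to the point, $d(\lambda)$ and the total product $\prod_\xi d_\xi(\lambda)$ are $W_G$-invariant, so it is cleaner to argue that $\langle\lambda\mid\xi\rangle$ and $\langle\delta\mid\xi\rangle$ depend only on the $W$-orbit of $\xi$ when $\xi$ and $\eta=w\xi$ have the same length — which holds because $\langle\xi\mid\xi\rangle=\langle\eta\mid\eta\rangle$, $\langle\lambda\mid\xi^\vee\rangle$ is fixed by $w$, and $\delta$ is $W$-invariant (being a positive multiple of the half-sum of restricted roots, up to the $\rho_\frm$ correction from Lemma \ref{rhodecomp.lem}, and $\rho_\frg,\rho_\frm$ are $W_G$, $W_\frm$-invariant respectively). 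Combining, $d_\xi(\lambda)d_{2\xi}(\lambda)=d_\eta(\lambda)d_{2\eta}(\lambda)$, and the same equality holds for the right-hand side $W(\langle\lambda\mid\xi\rangle,\langle\delta\mid\xi\rangle;m_\xi,m_{2\xi})$ since all its arguments are $W$-invariant. Hence (\ref{regdimfact}) for simple $\xi$ propagates to all of $\Sigma_0^+$.

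The main obstacle is the group-theoretic input in the first step: verifying that the element of the little Weyl group carrying a given indivisible restricted root to a simple one can be realized by an element $w\in W_G$ with $w\Delta_0=\Delta_0$, so that Lemma \ref{rootnest.lem} applies and the root nests (not just the roots) are matched up. For this I would invoke the classical structure theory of symmetric spaces — the restricted root system is reduced enough that the little Weyl group $W(\Sigma)$ is a quotient of $N_{W_G}(\fra)$, and one can always adjust by $W_\frm\subset N_{W_G}(\fra)$ acting trivially on $\fra$ to arrange that $\Delta_0$ is stabilized. One subtlety to handle explicitly: in the symmetric case each nest $\Phi^+(\xi)$ has a \emph{unique} basic root (this is where the simply-laced hypothesis on the Dynkin diagram of $\frm$ and the structure of $\fsl_2$-strings for a principal $\fsl_2$ are used, via Proposition \ref{rhoshift.prop}), so the hypothesis of Lemma \ref{rootnest.lem} — that $\alpha$ and $\beta$ are the unique basic roots — is automatically met. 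Once that is in place the rest is the bookkeeping sketched above.
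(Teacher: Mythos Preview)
Your overall strategy---reduce from an arbitrary $\eta\in\Sigma_0^+$ to a simple restricted root via the restricted Weyl group---is correct, but two of the supporting claims fail.

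First, it is not true that each nest $\Phi^+(\xi)$ has a unique basic root: conditions (2) and (5) of Proposition~\ref{regsimple.prop} explicitly require \emph{two} basic roots, and these cases do occur (e.g.\ Type {\bf A\,III}, where $\Phi^+(\lambda_i)=\{\alpha_i,\alpha_{\ell+1-i}\}$ for $i<r$). So Lemma~\ref{rootnest.lem} is not the tool you want. Use Lemma~\ref{principaltds.lem} instead: any representative $g\in G$ of an element of the restricted Weyl group normalizes $\fra$, hence $\frn_\eta\cong\frn_{g\cdot\eta}$ as $\frs$-modules, so the multiset of $\ad h_\frm^0$-eigenvalues on $\frn_\eta$ depends only on the orbit of $\eta$. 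This is exactly where the simply-laced hypothesis enters: it gives $\varpi_\frm^0=0$, i.e.\ $\rho_\frm=\tfrac12 h_\frm^0$, so the multiset of shifts $\{\langle\rho_\frm\mid\alpha\rangle:\alpha\in\Phi^+(\eta)\}$ is determined by the $\frs$-module structure and is therefore constant on the orbit. The hypothesis is not there to force a unique basic root.

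Second, the assertion that $\langle\delta\mid\xi\rangle$ (or $\langle\lambda\mid\xi\rangle$) depends only on the Weyl orbit of $\xi$ is false: $\delta$ plays the role of $\rho$ for $\Sigma$ and is \emph{not} invariant under the restricted Weyl group (nor is $\rho_\frg$ invariant under $W_G$---recall $s_\alpha\rho_\frg=\rho_\frg-\alpha$); the root-data tables in the paper show $\langle\delta\mid\xi\rangle$ varying within a single orbit. What saves the argument is the special shape of both sides of (\ref{regdimfact}). By (\ref{weyldimfact}) one has $d_\xi(\lambda)d_{2\xi}(\lambda)=P(x+y)/P(y)$ with $x=\langle\lambda\mid\xi\rangle$, $y=\langle\delta\mid\xi\rangle$, and $P(z)=\prod_{\alpha\in\Phi^+(\xi)}(z+\langle\rho_\frm\mid\alpha\rangle)\cdot\prod_{\alpha\in\Phi^+(2\xi)}(2z+\langle\rho_\frm\mid\alpha\rangle)$, which depends only on the shift multisets; likewise $W(x,y;m_\xi,m_{2\xi})=Q(x+y)/Q(y)$ with $Q$ depending only on $(m_\xi,m_{2\xi})$. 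The hypothesis at the simple root $\xi$ gives $P(x+y_\xi)/P(y_\xi)=Q(x+y_\xi)/Q(y_\xi)$ for the fixed $y_\xi=\langle\delta\mid\xi\rangle$ and infinitely many $x$, forcing $P$ and $Q$ to be proportional; hence the identity holds for \emph{every} $y$, in particular for $y_\eta=\langle\delta\mid\eta\rangle$.
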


For proof see \cite{Gindikin-Goodman}.

\vspace{1ex}

Recall that for a restricted root $\xi$ we let $k_{\xi}$ be the largest
eigenvalue of $\ad(h_{\frm}^{0})$ on $\frn_{\xi}$.

\begin{Proposition}
\label{regsimple.prop}
Assume that the Dynkin diagram of $\frm$ is simply laced and that every simple
indivisible restricted root $\xi$ satisfies one of the following.

\begin{enumerate}

\item
  $m_{\xi} = 1$\,.

\item
$m_{\xi} = 2$\,, $m_{2\xi} = 0$\,, $k_{\xi} = 0$\,, and there are two
basic roots in $\Phi^{+}(\xi)$\,.

\item 
$m_{\xi} = 3$\,, $m_{2\xi} = 0$\,, and $k_{\xi} = 2$\,.

\item 
 $m_{\xi} = k_{\xi} + 2  \geq 4$ and  $m_{2\xi} = 0$\,.

\item $ m_{\xi} = 2(k_{\xi} + 1)$\,, $m_{2\xi} = 1$\,,  and
there are two basic roots in $\Phi^{+}(\xi)$\,.

\end{enumerate}
Then all restricted root nests are regular and the dimension formula
\,{\rm (\ref{symspacedim})} is valid for all $\lambda \in \Gamma(G/K)$.
\end{Proposition}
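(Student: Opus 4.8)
The plan is to reduce to simple restricted roots via Lemma \ref{regsimple.lem} and then read off each dimension factor $d_\xi(\lambda)$ from Propositions \ref{rhoshift.prop} and \ref{rhopishift.prop}. Since the Dynkin diagram of $\frm$ is simply laced, all roots in $\Delta_0$ have the same length, so $h_\frm^0 = 2\rho_\frm$ and the hypothesis of Proposition \ref{rhoshift.prop} that ``all roots in $\Delta_0$ have the same length'' is satisfied. By Lemma \ref{regsimple.lem} it suffices to establish the factorization identity (\ref{regdimfact}) for each simple indivisible restricted root $\xi$; formula (\ref{symspacedim}) then follows immediately, and every restricted root nest is regular because the functions $W$ produced below are exactly the regular rank-one dimension functions in (\ref{regdimfact1})--(\ref{regdimfact2}).

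I would then fix a simple indivisible restricted root $\xi$, put $b = \tfrac12 k_\xi$, and treat the five cases in turn. For (1): $\dim \frn_\xi = m_\xi = 1$, so Proposition \ref{rhopishift.prop} gives $d_\xi(\lambda) = \Phi(\langle \lambda \mid \xi \rangle, \langle \delta \mid \xi \rangle)$; since $m_{2\xi} \ne 0$ would force $m_\xi$ to be even, here $m_{2\xi} = 0$, $d_{2\xi}(\lambda) = 1$, and the product equals $W(\cdot,\cdot;1) = \Phi(\cdot,\cdot)$. For (2): $b = 0$, so Proposition \ref{rhoshift.prop}(2) gives $d_\xi(\lambda) = \big[\Phi(\langle \lambda \mid \xi \rangle, \langle \delta \mid \xi \rangle)\big]^{2} = W(\cdot,\cdot;2)$. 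For (3): the relations $m_\xi = 3$, $k_\xi = 2$ together with the first paragraph of Proposition \ref{rhoshift.prop} force the eigenvalues of $\ad h_\frm^0$ on $\frn_\xi$ to be $-2,0,2$ with multiplicity one, hence a single basic root, and part (3) with $p = 1$ gives $d_\xi(\lambda) = \Phi(\langle \lambda \mid \xi \rangle, \langle \delta \mid \xi \rangle; 1) = W(\cdot,\cdot;3)$. For (4): $m_\xi = 2b+2$, and part (4) gives $d_\xi(\lambda) = \Phi(\langle \lambda \mid \xi \rangle, \langle \delta \mid \xi \rangle)\,\Phi(\langle \lambda \mid \xi \rangle, \langle \delta \mid \xi \rangle; \tfrac12 m_\xi - 1) = W(\cdot,\cdot;m_\xi)$. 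In cases (1)--(4) one has $m_{2\xi} = 0$, so $d_{2\xi}(\lambda) = 1$ and the right side of (\ref{regdimfact}) is $W(\cdot,\cdot;m_\xi,0) = W(\cdot,\cdot;m_\xi)$.

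Case (5) is where the one extra step is needed, and I expect it to be the only real bookkeeping hazard. Here $m_\xi = 2(k_\xi+1) = (2b+1)\cdot 2$ and $\Phi^+(\xi)$ has $p = 2$ basic roots, so Proposition \ref{rhoshift.prop}(3) gives $d_\xi(\lambda) = \big[\Phi(\langle \lambda \mid \xi \rangle, \langle \delta \mid \xi \rangle; b)\big]^{2}$ with $b = \tfrac14 m_\xi - \tfrac12$. Since $m_{2\xi} = 1$ we have $\dim \frn_{2\xi} = 1$, so Proposition \ref{rhopishift.prop} gives $d_{2\xi}(\lambda) = \Phi(\langle \lambda \mid 2\xi \rangle, \langle \delta \mid 2\xi \rangle)$; as $\Phi(x,y) = (x+y)/y$ is invariant under $(x,y) \mapsto (cx,cy)$ while $\langle \lambda \mid 2\xi \rangle = 2\langle \lambda \mid \xi \rangle$ and $\langle \delta \mid 2\xi \rangle = 2\langle \delta \mid \xi \rangle$, this equals $\Phi(\langle \lambda \mid \xi \rangle, \langle \delta \mid \xi \rangle)$. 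Multiplying yields $d_\xi(\lambda)\,d_{2\xi}(\lambda) = \Phi(\langle \lambda \mid \xi \rangle, \langle \delta \mid \xi \rangle)\,\big\{\Phi(\langle \lambda \mid \xi \rangle, \langle \delta \mid \xi \rangle; \tfrac14 m_\xi - \tfrac12)\big\}^{2}$, which is exactly $W(\langle \lambda \mid \xi \rangle, \langle \delta \mid \xi \rangle; m_\xi, 1)$ from the first line of (\ref{regdimfact2}) (legitimate since $m_\xi \ge 2$ is even). This establishes (\ref{regdimfact}) for every simple indivisible restricted root, and Lemma \ref{regsimple.lem} completes the proof. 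The hard part is purely clerical: keeping the parameter translations $k_\xi \leftrightarrow b \leftrightarrow m_\xi$ straight and performing the degree-zero homogeneity reduction in case (5); there is no conceptual obstacle once Lemma \ref{regsimple.lem} and Proposition \ref{rhoshift.prop} are in hand.
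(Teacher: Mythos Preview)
Your proof is correct and follows exactly the approach the paper indicates: invoke Lemma \ref{regsimple.lem} to reduce to simple restricted roots, then match each of the five cases to the appropriate part of Proposition \ref{rhoshift.prop} (or Proposition \ref{rhopishift.prop}) and to the definitions (\ref{regdimfact1})--(\ref{regdimfact2}). One tiny clerical point: in case (5) you cite Proposition \ref{rhoshift.prop}(3), which is stated for $b>0$; if $k_\xi=0$ (so $m_\xi=2$) you should instead cite part (2), but the resulting formula $[\Phi(\cdot,\cdot;0)]^2=[\Phi(\cdot,\cdot)]^2$ is identical, so nothing changes.
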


\begin{proof}
This follows from Proposition \ref{rhoshift.prop}, formulas
(\ref{regdimfact1}) and (\ref{regdimfact2}), and Lemma \ref{regsimple.lem}
\end{proof}

The symmetric spaces of rank $r \geq 2$ that satisfy condition (1) of
Proposition \ref{regsimple.prop} for all simple restricted roots are those of
types {\bf A\,I}, {\bf D\,I} ($r = \ell$), {\bf E\,I}, {\bf E\,V}, {\bf
E\,VIII}, {\bf F\,I}, and {\bf G} (to check this it suffices to look at the
Satake diagrams).

We proceed to carry out a case-by-case analysis of the remaining irreducible
symmetric spaces, obtain their marked Satake diagrams and root nest data, and
verify that formula (\ref{regdimfact}) holds for all simple restricted roots
with multiplicity greater than one (for multiplicity one, this is automatic).
It turns out that all but two of the cases with rank $r \geq 2$ are covered by
Proposition \ref{regsimple.prop}. The remaining two cases (with $\frm$ not
simply laced) are type {\bf B\,I} and type {\bf C\,II} ($\ell \geq 2r + 1$).
For the simple restricted roots in these cases we use the method of Section
\ref{rankone.sec}, Cases 3 and 4, to prove (\ref{regdimfact}), followed by a
Weyl group argument using Lemma \ref{rootnest.lem} to extend this result to
all the positive restricted roots (details given in Cases 3 and 4 below).

\begin{Remark}
{\em
 Following \cite[Ch. X, Table VI]{Helgason1} the
simple restricted roots are labeled using the enumeration of the simple roots.
Thus $\lambda_i = \overline{\alpha_i}$ when the restriction of $\alpha_i$ to
$\fra$ is nonzero.
}
\end{Remark}


%

\vspace{1ex}
\noindent
\textbf{\em Case 1.} {\bf Type A\,II.}
Let $G = \SL_{2r + 2}(\bC)$ and $K = \Sp_{2r+2}(\bC)$ with $r \geq 2$. Then
$G$ has rank $\ell = 2r + 1$ and the fundamental $K$-spherical highest weights
are $\mu_i = \varpi_{2i}$ for $i = 1, \ldots, r$. Hence $\frm \cong \fsl_2
\oplus \cdots \oplus \fsl_2$ \ ($r+1$ copies) and
$h_{\frm}^{0} = 
  \alpha_1 + \alpha_{3} + \cdots + \alpha_{2r-1} + \alpha_{2r+1}$.
Thus $\langle h_{\frm}^{0} \mid \alpha_{2i} \rangle = -2$ for $1\leq i \leq
r$. The simple restricted  root data are as follows.

\vspace{1ex}

\begin{center}

\begin{tabular}{|c |c| c |}
\hline
 restricted root   & multiplicity & \# basic roots 
 \\
\hline
 \rule[-1ex]{0ex}{3ex}  $ \lambda_{2i} $ \quad ($1\leq i \leq r$)  &  $4$  
  & $1$
 \\
\hline
\end{tabular}

\end{center}

\vspace{1ex}

\begin{figure}[h]
\includegraphics{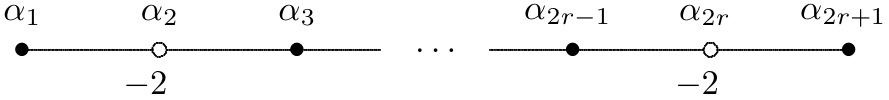} 
\caption{Marked Satake diagram for 
$\SL_{2r + 2}/\Sp_{2r + 2}$  }
\label{diagaii.fig}
\end{figure}

\noindent
The root nests are
\[
  \Phi^{+}\big(\lambda_{2i}\big)  =
  \{\, \alpha_{2i}\,,\, 
       \alpha_{2i-1} + \alpha_{2i}\,,\, 
       \alpha_{2i} + \alpha_{2i+1}\,,\, 
       \alpha_{2i-1} + \alpha_{2i} + \alpha_{2i+1}\, \}
\]
for $i = 1, \ldots, r$, and $\alpha_{2i}$ is the only basic root in the nest.
Since $k_{\lambda} + 2 = m_{\lambda}$ for all restricted simple positive
roots, condition (4) in Proposition \ref{regsimple.prop} is satisfied.


%

\vspace{2ex}
\noindent
\textbf{\em Case 2.} {\bf Type A\,III.}
Let $G = \SL_{\ell + 1}(\bC)$ and $K = {\bf S}(\GL_{r}(\bC) \times \GL_{\ell +
1 - r}(\bC))$ with $\ell \geq 2r -1 \geq 3$. The fundamental $K$-spherical
highest weights are $\mu_i = \varpi_{i} + \varpi_{\ell-i+1}$ for $i = 1,
\ldots, r$. The simple restricted root data are as follows.

\vspace{1ex}

\begin{center}

\begin{tabular}{|c |c| c |}
\hline
 \rule[-3.5ex]{0ex}{4ex}\small{ rest. root}   & \small{mult.} 
  & \parbox{8ex}{\,\small{\# basic  roots}}
 \\
\hline
  \rule[-1ex]{0ex}{3ex} $ \lambda_{i}$ \ 
  \small{($\scriptstyle{1 \leq i \leq r-1}$)}  &  $2$ & $2$ 
 \\
\hline
 \rule[-1ex]{0ex}{3ex}  $ \lambda_r $  & $2(\ell - 2r + 1) $ &  $2$ 
\\
\hline
 \rule[-1ex]{0ex}{3ex}  $ 2\lambda_r $  & $1$ &  $1$ 
\\
\hline
\end{tabular}
\quad
\begin{tabular}{|c |c| c | }
\hline
 \rule[-3.5ex]{0ex}{4ex} \small{rest. root}   &  \small{mult.}
  & \parbox{8ex}{\, \small{\# basic  roots} }
 \\
\hline
  \rule[-1ex]{0ex}{3ex} $ \lambda_{i}$ \ 
  \small{($\scriptstyle{1 \leq i \leq r-1}$)}  &  $2$ & $2$
 \\
\hline
  \rule[-1ex]{0ex}{3ex}  $ \lambda_r $  & $1$ & $1$
\\
\hline
\end{tabular}

\end{center}

\begin{figure}[h]
\includegraphics{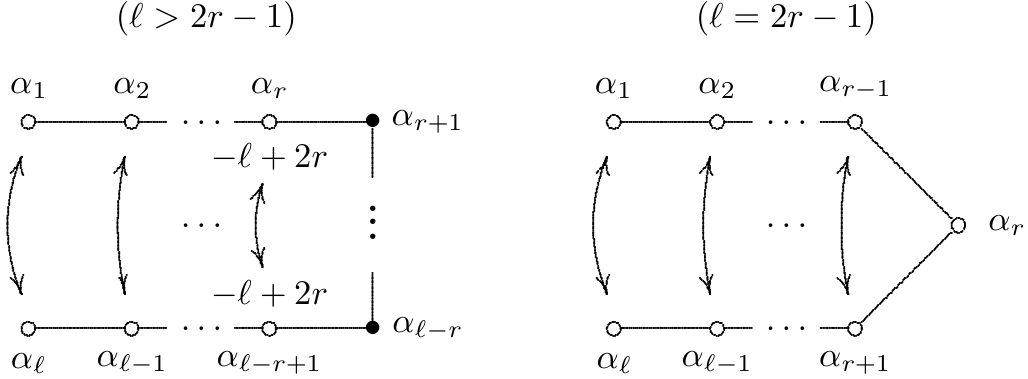} 
\caption{Marked Satake diagrams for 
$\SL_{\ell + 1}/{\bf S}(\GL_{r}\times \GL_{\ell + 1 - r})$}
\label{diagssaiii.fig}
\end{figure}

When $\ell = 2r-1$ then $\Delta_0 = \emptyset$ and $\frm' = 0$. Since $|\Supp
\mu_i | = 2$ for $i = 1, \ldots, r-1$ and $|\Supp \mu_r| = 1$, we know by
Lemma \ref{rankdim.lem} that $\dim \frc = 2(r-1) + 1 = 2r -1$ and $\dim \frc_0
= (2r - 1) - r = r-1$. Identify $\frt$ with $\frt^{*}$ using the form $\langle
\cdot \mid \cdot \rangle$. If $\bx = c_1\alpha_1 + \cdots +
c_{\ell}\alpha_{\ell}$ is in $\frt$, then the equations $\langle \mu_i \mid
\bx \rangle = 0$ for $i = 1, \ldots , r$ become
\[
 c_{\ell + 1 - i} = - c_i \quad\mbox{for $i = 1, \ldots,  r-1$ and } \ 
 c_{r} = 0\,.
\]    
Hence the linearly independent set
\begin{equation}
\label{aiiic0basis1}
 \{\, \alpha_{1} - \alpha_{\ell} \,, \,
  \alpha_{2} - \alpha_{\ell-1} \,, \,
  \cdots \, , \,
  \alpha_{r-1} - \alpha_{r+1} \,\} 
\end{equation}
is a basis for $\frc_{0}$. 
The root nests are
$ \Phi^{+}\big(\lambda_{i}\big) =
  \{\, \alpha_{i} \,,\,  \alpha_{\ell + 1 -i} \, \}$
  for $1\leq i \leq r-1$ and
$ \Phi^{+}\big(\lambda_{r}\big) =  \{\, \alpha_{r} \, \}$.
Thus condition (2) of Proposition \ref{regsimple.prop} is satisfied by
$\lambda_i$ for $1\leq i \leq r-1$ and condition (1) is satisfied by
$\lambda_{r}$.


Now assume $\ell > 2r-1$. Then $\Delta_0 = \{\,\alpha_{r+1}\,,\, \ldots \,,\,
\alpha_{\ell-r}\,\}$ and $|\Supp \mu_i | = 2$ for $i = 1, \ldots, r$. Hence
by Lemma \ref{rankdim.lem} $\dim \frc = 2r$ and $\dim \frc_0 = 2r - r = r$. 
The set (\ref{aiiic0basis1}) is in $\frc_0$, as is the vector
\[
\begin{split}
  \by =& (\ell - r + 1)\alpha_{r} + (\ell - r - 1)\alpha_{r+1} 
                       + (\ell - r - 3)\alpha_{r+2} + \cdots
\\                        
   &\quad + (r + 3 - \ell)\alpha_{\ell - r - 1}
        + (r + 1 - \ell)\alpha_{\ell - r}
        + (r - 1 - \ell)\alpha_{\ell - r+1}\,.
\end{split}
\]
Hence (\ref{aiiic0basis1}) together with $\by$ give a basis for $\frc_0$.
We have
$\frm' \cong \fsl_{\ell - 2r + 1}$ and 
$h_{\frm}^{0} =   
 (\ell - 2r) \alpha_{r+1} +  \cdots +  (\ell - 2r)\alpha_{\ell - r}$.
Thus $\langle h_{\frm}^{0} \mid \alpha_{i} \rangle = -\ell + 2r$ 
for $i = r$ and $i = \ell - r + 1$. Furthermore, the root nests are
\ $ \Phi^{+}\big(\lambda_{i}\big) =
  \{\, \alpha_{i} \,,\,  \alpha_{\ell + 1 -i} \, \}$
for $1\leq i \leq r-1$, and
\[
\begin{split}
 \Phi^{+}\big(\lambda_{r}\big) &=
  \{\, \alpha_{r} + \cdots + \alpha_{r+ i} 
      \,:\, 0 \leq i \leq \ell - 2r \,\}
 \\
   & \qquad \cup
  \{\, \alpha_{\ell - r + 1} + \cdots + \alpha_{\ell - r + 1 - i} 
   \,:\, 0 \leq i \leq \ell - 2r \,\}\,,
\\
 \Phi^{+}\big(2\lambda_{r}\big) &=
  \{\, \alpha_{r} + \cdots + \alpha_{\ell - r+ 1} \, \} \,.
\end{split}
\]
The nest  $ \Phi^{+}\big(\lambda_{i}\big)$ has two basic roots 
$\alpha_i$ and $\alpha_{\ell + 1 - i}$
for $1 \leq i \leq r$.
Thus condition (2) of Proposition \ref{regsimple.prop} is satisfied by
$\lambda_i$ for $1\leq i \leq r-1$ and condition (5) is satisfied by
$\lambda_{r}$.


%

\vspace{2ex}
\noindent
\textbf{\em Case 3.}  {\bf Type B\,I.}
Let $G = \SO_{2\ell + 1}(\bC)$ and $K = \SO_{r}(\bC) \times \SO_{2\ell + 1
-r}(\bC)$ with $2 \leq r < \ell$. The fundamental $K$-spherical highest
weights are $\mu_i = 2\varpi_{i}$ for $i = 1, \ldots, r-1$ and $\mu_r =
\varpi_r$. The simple restricted root data are as follows.

\vspace{1ex}

\begin{center}

\begin{tabular}{|c |c| c |}
\hline
 restricted root   & multiplicity & \# basic roots
 \\
\hline
 \rule[-1ex]{0ex}{3ex}  $\lambda_i$ \quad ($1\leq i \leq r-1$)  &  $1$  & $1$
 \\
\hline
 \rule[-1ex]{0ex}{3ex}   $\lambda_r $  & $2(\ell - r) + 1$  & $1$ 
\\
\hline
\end{tabular}

\end{center}

\begin{figure}[h]
\includegraphics{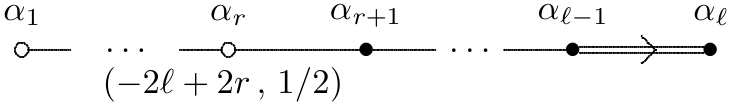} 
\caption{Marked Satake diagram for 
$\SO_{2\ell+1}/\SO_{r} \times \SO_{2\ell+1-r}$ }
\label{diagbi.fig}
\end{figure}

\noindent 
From the Satake diagram and the fact that $|\Supp \mu_i| = 1$ for $i
= 1, \ldots, r$, we know that $\frc_0 = 0$ and 
$  \frm \cong \fso_{2(\ell - r) + 1}$,
\ so we have  \ 
$ \frh_{\frm}^{0} = (2\ell - 2r)\alpha_{2r+1} + \cdots + 2\alpha_{\ell}$
\ and 
\begin{equation}
\label{bishift2}
 \langle \, \varpi_{\frm}^0 \mid \alpha_i \,\rangle =
 \begin{cases} 
                1/2 & \text{if $i = r$\,,}\\
               0 & \text{if  $i = r+1, \ldots, \ell - 1$\,,}\\
               -1/2 & \text{if $i = \ell$\,,}
 \end{cases}               
\end{equation}
as in Section \ref{rankone.sec}, Case 2. The markings on the diagram follow
from this. The only nest with more than one root is 
\[
 \Phi^{+}(\lambda_r) 
   = \{\beta_{j} \,:\, r+1 \leq j \leq \ell \}   
   \cup
   \{ \gamma_{j} \,:\, r+1 \leq j \leq \ell  \}   
   \cup
   \{\alpha_r + \cdots + \alpha_{\ell} \} \,,  
\]
where $\beta_{j} = \alpha_r + \cdots + \alpha_{j-1}$ and $\gamma_{j} =
\beta_{j} + 2\alpha_{j} + \cdots + 2\alpha_{\ell}$. Thus $\left|
\Phi^{+}(\lambda_r) \right| = 2(\ell - r) + 1$ and there is one basic root
$\alpha_r$ as indicated in the table.

The eigenvalues of $\ad h_{\frm}^{0}$ on $\frn_{\lambda_r}$ are
\[
  -2\ell + 2r, \,  \ldots \,, - 2, \, 0 , 
  \, 2, \, \ldots \,, 2\ell - 2r\,,
\]
each with multiplicity one, with the negative eigenvalues coming from
$\{\beta_j\}$ and the positive eigenvalues from $\{\gamma_j\}$, as in Section
\ref{rankone.sec}, Case 2.  From (\ref{bishift2}) we have 
\[
 \langle \, \varpi_{\frm}^0 \mid \beta_j \,\rangle = 1/2, \quad 
 \langle \, \varpi_{\frm}^0 \mid \gamma_j \,\rangle = -1/2, \quad
 \langle\, \varpi_{\frm}^0 \mid \alpha_r + \cdots + \alpha_{\ell} \,\rangle 
 = 0.
\]
Hence the $\rho_{\frm}$ shifts in the dimension factor for $\lambda_r$  are
\begin{equation}
\label{birhoshift}
  -\ell + r + \textstyle{\frac{1}{2}}, \, \ldots \,, - \textstyle{\frac{1}{2}}
  , \, 0 ,   \, \textstyle{\frac{1}{2}}, \, \ldots \,,  \ell - r  -
   \textstyle{\frac{1}{2}}\,.
\end{equation}
Since $ \ell - r - \frac{1}{2} = \frac{1}{2}m_{\lambda_r} - 1$, we conclude
that
$  d_{\lambda_r}(\lambda) = 
  W\big( \langle\, \lambda  \mid \lambda_r \,\rangle,
  \langle\, \delta \mid \lambda_r \,\rangle \,;\, 
   m_{\lambda_r} \big)$.

The restricted root system is of type $B_{r}$ with $\lambda_r$ the short
simple root. Each long restricted root $\xi$ is conjugate under the action of
the Weyl group of $G/K$ to $\alpha_1$. Thus $\xi$ has multiplicity one so by
Proposition \ref{rhopishift.prop} we conclude that (\ref{regdimfact}) holds
for $\xi$.

The positive short restricted roots are 
$\xi_i = \lambda_{i} + \cdots + \lambda_{r}$
for $1 \leq i \leq r$ with basic roots $\alpha_{i} + \cdots +
\alpha_{r}$. If the positive roots of $G$ are defined relative to the standard
ordered basis $\varepsilon_1 < \varepsilon_2 < \cdots < \varepsilon_{\ell}$
for $\frt^{*}$, then
$\alpha_{i} + \cdots + \alpha_{r} = \varepsilon_i - \varepsilon_{r+1}$.  
The Weyl group of $G$ consists of all signed permutations of $\{1, \ldots,
\ell \}$. Let $w$ be the permutation $i
\leftrightarrow r$. Then $w$ sends $\alpha_{i} + \cdots + \alpha_{r} \to
\alpha_{r}$ and fixes the roots in $\Delta_{0}$, so we can apply Lemma
\ref{rootnest.lem} to conclude that  (\ref{regdimfact}) holds for $\xi_i$.
Thus all restricted root nests are regular and the dimension formula
\,{\rm (\ref{symspacedim})} is valid for all $\lambda \in \Gamma(G/K)$.


%

\vspace{2ex}
\noindent
\textbf{\em Case 4.} {\bf Type C\,II.}
Let $G = \Sp_{2\ell}(\bC)$ and $K =\Sp_{2r}(\bC)\times \Sp_{2\ell - 2r}(\bC)$
with $\ell \geq 2r + 1$ and  $r \geq 2$.
The $K$-spherical highest weights are $\mu_i = \varpi_{2i}$ for $i = 1,
\ldots, r$. The simple restricted root data are as
follows.

\vspace{1ex}

\begin{center}

\begin{tabular}{|c |c| c |}
\hline
 restricted root   & multiplicity & \# basic roots
 \\
\hline
 \rule[-1ex]{0ex}{3ex}  $ \lambda_{2i}$ \quad ($1 \leq i \leq r-1$)  & 
   $4$  & $1$
 \\
\hline
 \rule[-1ex]{0ex}{3ex}   $ \lambda_{2r} $  & $4(\ell - 2r)$  & $1$ 
\\
\hline
 \rule[-1ex]{0ex}{3ex}   $ 2\lambda_{2r} $  & $3$  & $1$ 
\\
\hline
\end{tabular}

\end{center}

\begin{figure}[h]
\includegraphics{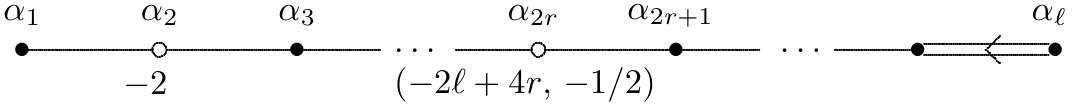} 
\caption{Marked Satake diagram for 
$\Sp_{2\ell}/\Sp_{2r}\times \Sp_{2(\ell - r)}$ \ ($\ell \geq 2r + 1$) }
\label{diagcii2.fig}
\end{figure}

\noindent
From the Satake diagram and the fact that $|\Supp \mu_i| = 1$ for $i
= 1, \ldots, r$, we know that $\frc_0 = 0$ and 
$  \frm \cong
   \underbrace{
 \fsl_{2} \oplus \cdots \oplus \fsl_{2}}_{r {\rm \ copies}} 
 \oplus \fsp_{2(\ell - 2r)}$
\ and 
\begin{equation}
\label{ciishift1}
 \frh_{\frm}^{0} = \alpha_1 + \alpha_3 + \cdots + \alpha_{2r-1}
   + (2\ell -4r -1)\alpha_{2r+1} + \cdots \,.
\end{equation}     
As in Section \ref{rankone.sec}, Case 4, we calculate that
\begin{equation}
\label{ciishift2}
 \langle \, \varpi_{\frm}^0 \mid \alpha_i \,\rangle =
 \begin{cases} 0 & \text{if $1 \leq i \leq 2r -1$ or
                   $2r+ 1 \leq i \leq  \ell-1$\,,} \\
               -1/2 & \text{if $i = 2r$\,,}\\
               1 & \text{if $i = \ell$\,.}
 \end{cases}               
\end{equation}
The markings on the diagram follow from (\ref{ciishift1}) and
(\ref{ciishift2}). 

For the restricted root $\lambda_{2i}$ with $1 \leq i \leq r-1$
the root nest is 
\begin{equation}
\label{ciinest}
  \Phi^{+}\big(\lambda_{2i}\big)  =
  \{\, \alpha_{2i}\,,\, 
       \alpha_{2i-1} + \alpha_{2i}\,,\, 
       \alpha_{2i} + \alpha_{2i+1}\,,\, 
       \alpha_{2i-1} + \alpha_{2i} + \alpha_{2i+1}\, \}
\end{equation}
with basic root $\alpha_{2i}$. Thus condition (4) of Proposition
\ref{regsimple.prop} is satisfied.

For the indivisible restricted root $\xi = \lambda_{2r}$ we have
\[
\begin{split}
 \Phi^{+}(\xi) &= 
   \{\beta_j \,:\, 2r+1 \leq j \leq \ell \}  
   \cup  \{\alpha_{2r-1} + \beta_j \,:\, 2r+1 \leq j \leq \ell \} \\
  & \quad  
   \cup  \{ \gamma_j \,:\, 2r+1 \leq j \leq \ell \} 
   \cup  \{\alpha_{2r-1} + \gamma_j   \,:\, 2r+1 \leq j \leq \ell \} \,,
\end{split}
\]  
where 
$\beta_j = \alpha_{2r} + \cdots + \alpha_{j-1}$ and $\gamma_j = \beta_j
+ 2\alpha_j + \cdots + 2\alpha_{\ell-1} + \alpha_{\ell}$ 
(here we take $\gamma_{\ell} = \beta_{\ell} + \alpha_{\ell}$). The basic root
is $\beta = \alpha_{2r}$ and there are $4(\ell - 2r)$ roots in the nest.
Furthermore,
\[
  \Phi^{+}(2\xi) = 
   \{ \beta'\,,\, \beta' + \alpha_{2r-1}\,,\, \beta' + 2\alpha_{2r-1} \} \,, 
\]
where 
$\beta' =  2\alpha_{2r} + \cdots + 2\alpha_{\ell-1} + \alpha_{\ell}$ 
is the basic root.
Using these root nests, formula (\ref{ciishift1}),  and the same
argument as in Section \ref{rankone.sec}, Case 4, we find that 
the values of $ \langle\, \rho_{\frm} \mid
\alpha \, \rangle$ for $\alpha \in \Phi^{+}(\xi)$ are
\[
\textstyle{ -\ell + 2r - \frac{1}{2}}\,,\,
 \underbrace{\textstyle{ 
  -\ell+ 2r + \frac{1}{2} \,,\, \ldots \,,\,  -\frac{3}{2}
  }}_{\mbox{multiplicity $2$}} \,,\,
\textstyle{ -\frac{1}{2} \,,\, \frac{1}{2} \,,\,}
 \underbrace{\textstyle{
 \frac{3}{2} \,,\, \ldots \,,\, \ell - 2r - \frac{1}{2}
 }}_{\mbox{multiplicity $2$}}
  \textstyle{  \,,\, \ell - 2r + \frac{1}{2}}\,.
\]
Likewise, the values of $ \langle\, \rho_{\frm} \mid \alpha \, \rangle$ for
$\alpha \in \Phi^{+}(2\xi)$ are $-1$, $0$, $1$. Hence for $\lambda \in
\Gamma(G/K)$ we have 
\begin{equation}
\label{cciidim}
\begin{split}
 d_{\xi}(\lambda)d_{2\xi}(\lambda) = 
 W\big( \langle\, \lambda  \mid \xi \,\rangle,
  \langle\, \delta \mid \xi \,\rangle \,;\, 
   m_{\xi}, m_{2\xi} \big)\,.
\end{split}
\end{equation}
Thus all the simple indivisible restricted roots for $G/K$ satisfy
(\ref{regdimfact}). 

Since each root nest for the simple restricted roots has only one basic root,
we can use the same strategy as for type {\bf B\,I} to prove that
(\ref{regdimfact}) holds for all positive indivisible restricted roots. Thus
it suffices to find elements of the Weyl group of $G$ that satisfy the
conditions of Lemma \ref{rootnest.lem}. It will then follow that all
restricted root nests are regular and the dimension formula \,{\rm
(\ref{symspacedim})} is valid for all $\lambda \in \Gamma(G/K)$. We carry out
this argument case-by-case.

Let the positive roots of $G$ be defined relative to an ordered basis
$\varepsilon_1 < \varepsilon_2 < \cdots < \varepsilon_{\ell}$ for $\frt^{*}$.
The Weyl group of $G$ consists of all signed permutations of $\{1, \ldots,
\ell \}$. The restricted root system is of type $BC_r$, with $\lambda_{2r}$
the short indivisible simple root.

\begin{enumerate} 
\item
Let \ 
$\xi_{ij} = \lambda_{2i} +  \cdots + \lambda_{2j}$
\ for $1 \leq i < j \leq r-1$ (long positive
indivisible root). The basic root for $\xi_{ij}$ is  
\[
 \alpha_{2i} + \alpha_{2i+1} + \cdots + \alpha_{2j} = 
   \varepsilon_{2i} - \varepsilon_{2j+1}\,.  
\]
Let $w$ be the signed permutation
\[
 \hspace*{5ex}
  2i + 1 \rightarrow 2j+1, \quad
  2i + 2 \rightarrow 2j+2,  \quad
  2j + 1 \rightarrow \overline{2i+2},  \quad
  2j + 2 \rightarrow \overline{2i+1}
\]
that fixes all other indices (where $p \rightarrow \overline{q}$ means
$\varepsilon_{p} \rightarrow -\varepsilon_{q}$). Then 
\[
 w: \alpha_{2i} \to \varepsilon_{2i} - \varepsilon_{2j+1}\,,
\quad
  w:\alpha_{2i+1} \leftrightarrow \alpha_{2j+1}\,, 
\]
and $w$ fixes the other roots in $\Delta_{0}$.

\vspace{1ex}

\item
Let \ 
$\eta_{ij} = \lambda_{2i} + \cdots + \lambda_{2j-2}
   + 2\lambda_{2j} + \cdots + 2\lambda_{2r}$
\ for $1 \leq i < j \leq r$ (long positive
indivisible root). The basic root for $\eta_{ij}$ is  
\[
 \alpha_{2i} + \cdots + \alpha_{2j-1} + 2\alpha_{2j} + \cdots
  + 2\alpha_{2\ell - 1} + \alpha_{\ell} = 
  \varepsilon_{2i} + \varepsilon_{2j}\,.  
\]
Let $w$ be the signed permutation
\[
  \hspace*{5ex}
 1 \leftrightarrow 2i-1, \quad
 2 \leftrightarrow 2i, \quad
 3 \rightarrow \overline{2j},  \quad
 4 \rightarrow \overline{2j-1},  \quad
 2j-1 \rightarrow 3, \quad
 2j  \rightarrow 4
\]
that fixes all other indices. Then 
\[
 w: \alpha_{2i} \to   \varepsilon_{2i} + \varepsilon_{2j}\,, 
\quad w:\alpha_{1} \leftrightarrow \alpha_{2i - 1}\,, 
\quad w:\alpha_{3} \leftrightarrow \alpha_{2j - 1}\,, 
\]
and $w$ fixes the other roots in $\Delta_{0}$.

\vspace{1ex}

\item
Let \ 
$\xi_{i} = \lambda_{2i} +  \cdots + \lambda_{2r}$
\ for $1 \leq i \leq r$ (short positive
indivisible root). The basic root for $\xi_{i}$ is  
\[
 \alpha_{2i} + \alpha_{2i+1} + \cdots + \alpha_{2r} = 
   \varepsilon_{2i} - \varepsilon_{2r+1}\,.  
\]
Let $w$ be the permutation
\ $ 2i-1   \leftrightarrow 2r-1$,
\ $ 2i  \leftrightarrow 2r$ \ 
that fixes all other indices. Then 
\[
 w: \alpha_{2r} \to \varepsilon_{2i} - \varepsilon_{2r+1}\,,
\quad
  w:\alpha_{2i-1} \leftrightarrow \alpha_{2r - 1}\,, 
\]
and $w$ fixes the other roots in $\Delta_{0}$.

\vspace{1ex}

\item
Let \ 
$2\xi_{i} = 2\lambda_{2i} +  \cdots + 2\lambda_{2r}$
\ for $1 \leq i \leq r$. The basic root for $2\xi_{i}$ is  
\[
 2\alpha_{2i} + 2\alpha_{2i+1} + \cdots + 2\alpha_{\ell-1} + \alpha_{\ell} = 
   2\varepsilon_{2i}\,.  
\]
Let $w$ be the permutation 
\ $ 2i -1   \leftrightarrow 2r-1$, \ 
$2i \leftrightarrow 2r$ \ 
that fixes all other indices. Then 
\[
 w: 2\varepsilon_{2i} \leftrightarrow  2\varepsilon_{2r}\,,
\quad
  w:\alpha_{2i-1} \leftrightarrow \alpha_{2r - 1}\,, 
\]
and $w$ fixes the other roots in $\Delta_{0}$.

\end{enumerate}


\vspace{1ex}

Now let $G = \Sp_{4r}(\bC)$ and $K =\Sp_{2r}(\bC)\times \Sp_{2r}(\bC)$. The
$K$-spherical highest weights are $\mu_i = \varpi_{2i}$ for $i = 1, \ldots,
r$. The simple restricted root data  are as follows.

\vspace{1ex}

\begin{center}

\begin{tabular}{|c |c| c |}
\hline
 restricted root   & multiplicity & \# basic roots
 \\
\hline
 \rule[-1ex]{0ex}{3ex}  $ \lambda_{2i}$ \quad ($1 \leq i \leq r-1$)  & 
   $4$  & 1
 \\
\hline
 \rule[-1ex]{0ex}{3ex}   $ \lambda_{2r} $  & $3$  & 1 
\\
\hline
\end{tabular}

\end{center}


\begin{figure}[h]
\includegraphics{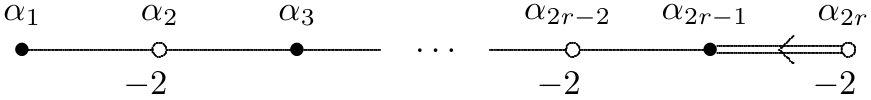} 
\caption{Marked Satake diagram for $\Sp_{4r}/\Sp_{2r}\times \Sp_{2r}$ }
\label{diagcii3.fig}
\end{figure}

\noindent
From the Satake diagram  and the fact that $|\Supp \mu_i| = 1$ for $i
= 1, \ldots, r$, we know that $\frc_0 = 0$ and
$
  \frm \cong
   \underbrace{
 \fsl_{2} \oplus \cdots \oplus \fsl_{2}}_{r {\rm \ copies}} 
$
\ and \ 
$ \frh_{\frm}^{0} = \alpha_1 + \alpha_3 + \cdots + \alpha_{2r-1}$.
This gives the markings on the diagram. 
The root nests are given by (\ref{ciinest}) with basic root $\alpha_{2i}$ for
$1 \leq i \leq r-1$, and 
$
 \Phi^{+}\big(\lambda_{2r}\big) = 
    \{\, \alpha_{2r}\,,\, 
       \alpha_{2r-1} + \alpha_{2r}\,,\, 
       2\alpha_{2r-1} + \alpha_{2r} \, \}
$
with basic root $\alpha_{2r}$. Thus condition (4) of Proposition
\ref{regsimple.prop} is satisfied by $\lambda_{2i}$ for $1\leq i \leq r-1$ and
condition (3) is satisfied by $\lambda_{2r}$.


%

\vspace{2ex}
\noindent
\textbf{\em Case 5.} {\bf Type D\,I.}
Let $G = \SO_{2\ell}(\bC)$ and $K = \SO_{r}(\bC) \times \SO_{2\ell - r}(\bC)$ 
with $2 \leq r < \ell$.
 The fundamental $K$-spherical highest weights are
$\mu_i = 2\varpi_{i}$ for $i = 1, \ldots, r-1 $, together with 
\[
 \mu_r = \begin{cases} \varpi_{r} & \mbox{if $r \leq \ell - 2$}\,,
            \\
      \varpi_{\ell - 1} +  \varpi_{\ell} & \mbox{if $r = \ell -1$\,.}
    \end{cases}
\]
The simple restricted root data are as follows.

\vspace{1ex}

\begin{center}

\begin{tabular}{|c |c| c |}
\hline
 \rule[-2ex]{0ex}{5ex} \small{restricted root}   & \small{mult.} 
  & \parbox{8ex}{ \small{\# basic roots}}
 \\
\hline
  \rule[-1ex]{0ex}{3ex} $ \lambda_{i}$ \ 
  \small{($\scriptstyle{1 \leq i \leq r-1}$)}  &  $1$ & $1$ 
 \\
\hline
 \rule[-1ex]{0ex}{3ex}  $ \lambda_r $  & $2$ &  $2$ 
\\
\hline
\end{tabular}
\quad
\begin{tabular}{|c |c| c | }
\hline
 \rule[-2ex]{0ex}{5ex} \small{restricted root}   & \small{mult.}
  & \parbox{8ex}{\small{ \# basic roots} }
 \\
\hline
  \rule[-1ex]{0ex}{3ex} $ \lambda_{i}$ \ 
  \small{($\scriptstyle{1 \leq i \leq r-1}$)}  & $1$ & $1$
 \\
\hline
  \rule[-1ex]{0ex}{3ex}  $ \lambda_r $  & $2(\ell - r)$ & $1$
\\
\hline
\end{tabular}

\end{center}

\begin{figure}[h]
\includegraphics{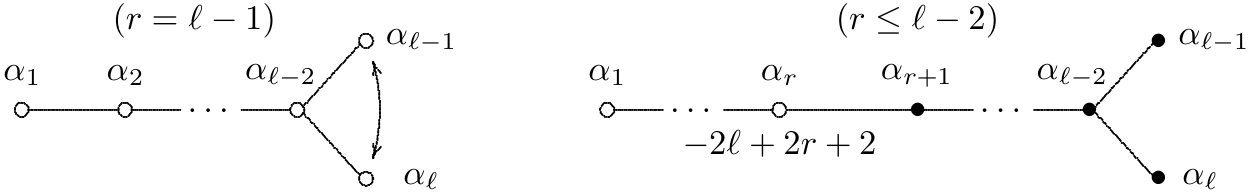} 
\caption{Marked Satake diagrams for 
$\SO_{2\ell}/\SO_{r} \times \SO_{2\ell-r}$ }
\label{diagdi.fig}
\end{figure}

Let $r = \ell - 1$. Then  $\Delta_0 = \emptyset$ and $\dim \frc_0 =
(r+1) - r = 1$ by Lemma \ref{rankdim.lem} since $|\Supp \mu_r| = 2$. Identify
$\frt$ with $\frt^{*}$ using the form $\langle \cdot \mid \cdot \rangle$. If
$\bx = c_1\alpha_1 + \cdots + c_{\ell}\alpha_{\ell}$ is in $\frt$, then the
equations $\langle \mu_i \mid \bx \rangle = 0$ for $i = 1, \ldots , r$ become
\[
 c_{i} = 0 \quad\mbox{for $i = 1, \ldots , r-2$ and } \ 
 c_{\ell - 1} = - c_{\ell}\,.
\]    
Hence $\by = \alpha_{\ell - 1} - \alpha_{\ell}$ is a basis for $\frc_0$.
Condition (1) of Proposition \ref{regsimple.prop}
is satisfied by $\lambda_i$ for $1\leq i \leq r-1$ and condition (2) is
satisfied by $\lambda_{r}$.

Now assume $r \leq \ell - 2$. Then $\dim \frc_0 = 0$ by Lemma
\ref{rankdim.lem} since $|\Supp \mu_i| = 1$ for $i = 1, \ldots, r$. 
When $r < \ell - 2$, then  from the
Satake diagram we conclude that $\frm \cong \fso_{2(\ell - r)}$ and hence 
 $\frh_{\frm}^{0} = 2(\ell - r - 1)\alpha_{r+1} + \cdots $.
Thus $\langle \frh_{\frm}^{0} \mid \alpha_{r} \rangle = -2\ell + 2r + 1$ as
indicated in Figure \ref{diagdi.fig}. As in Section \ref{rankone.sec}, Case 3, we have
\[
 \Phi^{+}(\lambda_r) 
   = \{\beta_{j} \,:\, r+1 \leq j \leq \ell \}   
   \cup
   \{ \gamma_{j} \,:\, r+1 \leq j \leq \ell -1  \}   
   \cup\{\alpha_r + \cdots + \alpha_{\ell-2} + \alpha_{\ell} \} \,,
\]
where $\beta_{j} = \alpha_r + \cdots + \alpha_{j-1}$ and $\gamma_{j} =
\beta_{j} + 2\alpha_{j} + \cdots + 2\alpha_{\ell-2} + \alpha_{\ell-1} +
\alpha_{\ell}$ (the roots with coefficient $2$ are omitted when $j = \ell -
1$). Thus $\left| \Phi^{+}(\xi_1) \right| = 2\ell - 2r$ and the only basic
root in the nest is $\alpha_{r}$.
Thus condition (1) of Proposition \ref{regsimple.prop} is satisfied by
$\lambda_i$ for $1\leq i \leq r-1$ and condition (4) is satisfied by
$\lambda_{r}$.

Finally, let $r = \ell - 2$. Then $\frm \cong \fsl_{2} \oplus \fsl_{2}$ and
 $\frh_{\frm}^{0} = \alpha_{\ell-1} + \alpha_{\ell}$.
Hence $\langle \frh_{\frm}^{0} \mid \alpha_{r} \rangle = -2$ as
indicated in Figure \ref{diagdi.fig} and 
\[
 \Phi^{+}(\lambda_r) 
   = \{\beta\,,\, \beta + \alpha_{\ell - 1}\,,\, \beta + \alpha_{\ell} 
    \,,\, \beta + \alpha_{\ell - 1} + \alpha_{\ell} \} 
\]
where $\beta = \alpha_{r}$ is the basic root. Thus condition (1) of
Proposition \ref{regsimple.prop} is satisfied by $\lambda_i$ for $1\leq i \leq
r-1$ and condition (4) is satisfied by $\lambda_{r}$.


%

\vspace{2ex}
\noindent
\textbf{\em Case 6.} {\bf Type D\,III.}
Let $G = \SO_{2\ell}(\bC)$ and  $K = \GL_{\ell}(\bC) $
with $\ell \geq 4$. The fundamental $K$-spherical highest weights are
$\mu_i = \varpi_{2i}$ for $i = 1, \ldots, r-1 $ together with 
\[
 \mu_r = \begin{cases}2\varpi_{\ell}
   &\mbox{when $\ell = 2r$\,,}
   \\
   \varpi_{\ell-1} + \varpi_{\ell}
   &\mbox{when $\ell = 2r+1$\,.}
   \end{cases}
\]   
The simple restricted root data are as follows.

\vspace{1ex}

\begin{center}

\begin{tabular}{|c |c| c |}
\hline
 \rule[-2ex]{0ex}{5ex} \small{restricted root}   & \small{mult.} 
  & \parbox{8ex}{ \small{\# basic roots} }
 \\
\hline
  \rule[-1ex]{0ex}{3ex} $ \lambda_{2i}$ \  
  \small{($\scriptstyle{1 \leq i \leq r-1}$)}  &  
   $4$ & $1$ 
 \\
\hline
 \rule[-1ex]{0ex}{3ex}  $ \lambda_{2r} $  & $1$ &  $1$ 
\\
\hline
\end{tabular}
\qquad
\begin{tabular}{|c |c| c | }
\hline
 \rule[-2ex]{0ex}{5ex} \small{restricted root}   & \small{mult.}
  & \parbox{8ex}{ \small{\# basic roots} }
 \\
\hline
  \rule[-1ex]{0ex}{3ex} $ \lambda_{2i}$ \ 
  \small{($\scriptstyle{1 \leq i \leq r-1}$)}  & $4$ & $1$
 \\
\hline
  \rule[-1ex]{0ex}{3ex}  $ \lambda_{2r} $  & $4$ & $2$
\\
\hline
  \rule[-1ex]{0ex}{3ex}  $ 2\lambda_{2r} $  & $1$ & $1$
\\
\hline
\end{tabular}

\end{center}

\begin{figure}[h]
\includegraphics{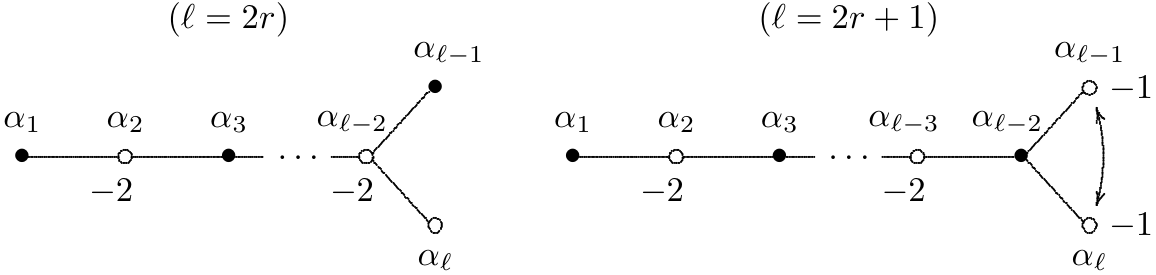} 
\caption{Marked Satake diagrams for 
$\SO_{2\ell}/\GL_{\ell}$ }
\label{diagssdiii.fig}
\end{figure}

Assume $\ell =2r$. Then $\dim \frc_0 = 0$ by Lemma \ref{rankdim.lem} since
$|\Supp \mu_i| = 1$ for $i = 1, \ldots, r$. Hence from the Satake diagram
$\frm \cong \fsl_{2} \oplus \cdots \oplus \fsl_2$ \ ($r$ copies) and
 $\frh_{\frm}^{0} = \alpha_{1} +  \alpha_{3} + \cdots  + \alpha_{\ell - 1}$.
Thus $\langle \frh_{\frm}^{0} \mid \alpha_{2i} \rangle = -2$ for $i = 1,
\ldots, \ell-1$ as indicated in Figure \ref{diagssdiii.fig}. 
The root nests are 
$\Phi^{+}\big(\lambda_{\ell}\big)  = \{ \alpha_{\ell} \}$ together with
\begin{equation}
\label{diiinest}
  \Phi^{+}\big(\lambda_{2i}\big)  =
  \{\, \alpha_{2i}\,,\, 
       \alpha_{2i-1} + \alpha_{2i}\,,\, 
       \alpha_{2i} + \alpha_{2i+1}\,,\, 
       \alpha_{2i-1} + \alpha_{2i} + \alpha_{2i+1}\, \}
\end{equation}
for $i = 1, \ldots, r-1$ with basic root  $\alpha_{2i}$.
Thus condition (4) of Proposition \ref{regsimple.prop} is satisfied by
$\lambda_{2i}$ for $1\leq i \leq r-1$ and condition (1) is satisfied by
$\lambda_{2r}$.


Now assume  $\ell = 2r + 1$. Then $\dim \frc_0 = (r+1) - r = 1$ by Lemma
\ref{rankdim.lem} since $|\Supp \mu_r| = 2$. 
From the Satake diagram we obtain $\frm'
\cong \fsl_{2} \oplus \cdots \oplus \fsl_2$ \ ($r-1$ copies) and
 $\frh_{\frm}^{0} = \alpha_{1} +  \alpha_{3} + \cdots   \alpha_{\ell - 2}$.
Hence $\langle \frh_{\frm}^{0} \mid \alpha_{2i} \rangle = -2$ for $i = 1,
\ldots, \ell-3$, while 
$\langle \frh_{\frm}^{0} \mid \alpha_{i} \rangle = -1$ for $i = \ell-1$ and
$i= \ell$, as indicated in Figure \ref{diagssdiii.fig}. The root nest
$\Phi^{+}\big(\lambda_{2i}\big)$ is given by (\ref{diiinest}) for $i = 1,
\ldots, r-1$, while 
\[
\left\{
\begin{split}
 \Phi^{+}\big(\lambda_{2r}\big) &= 
  \{\, \alpha_{\ell-1}\,,\, \alpha_{\ell} \,,\, 
  \alpha_{\ell} + \alpha_{\ell-2}   \,,\,
  \alpha_{\ell-1} + \alpha_{\ell-2}  \, \} 
\quad\mbox{(basic roots $\alpha_{\ell-1}\,,\, \alpha_{\ell}$)\,,} 
\\
 \Phi^{+}\big(2\lambda_{2r}\big) &= 
  \{\, \alpha_{\ell-2} + \alpha_{\ell-1} + \alpha_{\ell} \,\} \,.
\end{split}
\right.
\]
Thus condition (4) of
Proposition \ref{regsimple.prop} is satisfied by $\lambda_{2i}$ for $1\leq i
\leq r-1$ and condition (5) is satisfied by $\lambda_{\ell}$.


%

\vspace{2ex}
\noindent
\textbf{\em Case 7.} {\bf Type E\,II.}
Let $G$ be the complex exceptional group of type ${\bf E}_6$ and 
$K = \SL_{6} \times \SL_{2}$.
The fundamental $K$-spherical highest weights are
$\mu_1 = \varpi_{1} + \varpi_{6}$, \, $\mu_2 = \varpi_{3} + \varpi_{5}$, 
\, $ \mu_3 = 2\varpi_{4}$, and $ \mu_4 = 2\varpi_{2}$.  
The simple restricted root data are as follows.

\vspace{1ex}

\begin{center}

\begin{tabular}{|c |c| c |}
\hline
 restricted root   & multiplicity & \# basic roots
 \\
\hline
 \rule[-1ex]{0ex}{3ex} $ \lambda_{2}$\,, \ $\lambda_{4}$  &  $1$ & $1$
 \\
\hline
 \rule[-1ex]{0ex}{3ex}  $ \lambda_{1}$\,, $ \lambda_{3}$   &  $2$ & $2$
 \\
\hline
\end{tabular}

\end{center}

\begin{figure}[h]
\includegraphics{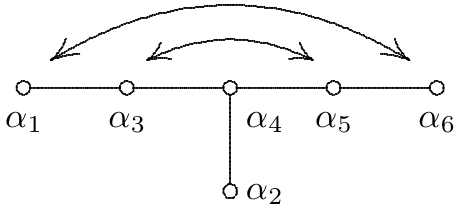} 
\caption{Satake diagram for 
${\bf E}_{6}/\SL_{6}\times \SL_{2}$}
\label{diagsseii.fig}
\end{figure}

\noindent
The root data follows from the Satake diagram. Thus condition (1) of
Proposition \ref{regsimple.prop} is satisfied by $\lambda_2$ and $\lambda_4$,
while condition (2) is satisfied by $\lambda_1$ and $\lambda_3$.

We have $\Delta_0 = \emptyset$ and $\dim \frc_0 = 6 - 4 = 2$ by Lemma
\ref{rankdim.lem} since $|\Supp \mu_i| = 2$ for $i = 1,2$. 


%

\vspace{2ex}
\noindent
\textbf{\em Case 8.} {\bf Type E\,III.}
Let $G$ be the complex exceptional group of type ${\bf E}_6$ and
$K$ the connected subgroup with Lie algebra 
$\fso_{10}(\bC) \oplus \fso_{2}(\bC)$.   The fundamental $K$-spherical
highest weights are $\varpi_1 + \varpi_6$ and $\varpi_2$.
The simple restricted root data  are as follows.

\vspace{1ex}

\begin{center}

\begin{tabular}{|c |c| c |}
\hline
 restricted root   & multiplicity & \# basic roots
 \\
\hline
 \rule[-1ex]{0ex}{3ex} $ \lambda_{2}$  &  $6$ & $1$
 \\
\hline
 \rule[-1ex]{0ex}{3ex}  $ \lambda_{1}$  &  $8$ & $2$
 \\
\hline
 \rule[-1ex]{0ex}{3ex}  $ 2\lambda_{1}$  &  $1$ & $1$
 \\
\hline
\end{tabular}

\end{center}

\vspace{1ex}

\begin{figure}[h]
\includegraphics{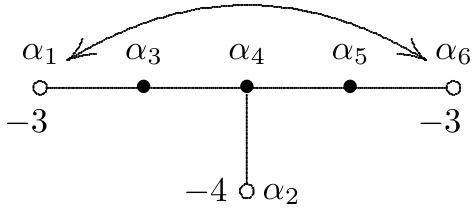} 
\caption{Marked Satake diagram for 
${\bf E}_{6}/\SO_{10}\times \SO_{2}$}
\label{diagsseiii.fig}
\end{figure}

\noindent
From the Satake diagram we see that $\frm' \cong \fsl_{4}$  and thus
$h_{\frm}^{0} = 3\alpha_{2} + 4\alpha_{4} + 3\alpha_{5}$. This gives the
markings in the Satake diagram.
Using the notation of Section \ref{highrank.sec}, Case 8, we obtain a Cartan
subspace $\fra$ for $G/K$ by the equation $\xi_1 = \xi_3$; the root nests for
the simple restricted roots are
\[
  \Phi^{+}(\lambda_1) = \Phi^{+}(\xi_1)\cup \Phi^{+}(\xi_3)\,,
\quad
  \Phi^{+}(\lambda_2) = \Phi^{+}(\xi_2)\,,
\quad
  \Phi^{+}(2\lambda_1)  = \Phi^{+}(\xi_1 + \xi_3)\,.
\]
Since $h_{\frm}^{0}$ is the same for $G/K$ and $G/H$ (where $H =
\SO_{10}(\bC)$), the determination of the number of basic roots follows from
the calculations in Section \ref{highrank.sec}, Case 8. Thus condition (5) of
Proposition \ref{regsimple.prop} is satisfied by $\lambda_{1}$ and condition
(4) is satisfied by $\lambda_2$.

We have $\Delta_0 = \{\alpha_3, \alpha_4, \alpha_5\}$ and $\dim \frc_0 = 3 - 2
= 1$ by Lemma \ref{rankdim.lem} since $|\Supp \mu_1| = 2$.

%

\vspace{2ex}
\noindent
 \textbf{\em Case 9.} {\bf Type E\,IV.}
Let $G$ be the complex exceptional group of type ${\bf E}_6$ and 
$K$ the complex exceptional group of type ${\bf F}_4$.
The fundamental $K$-spherical highest weights are
$\mu_1 = \varpi_{1}$ and
$ \mu_2 = \varpi_{6}$.
The simple restricted root data are as follows.

\vspace{1ex}

\begin{center}

\begin{tabular}{|c |c| c |}
\hline
 restricted root   & multiplicity & \# basic roots
 \\
\hline
 \rule[-1ex]{0ex}{3ex} $ \lambda_{1}$\,, \ $\lambda_{6}$  &  $8$ & $1$
 \\
\hline
\end{tabular}

\end{center}

\vspace{1ex}

\begin{figure}[h]
\includegraphics{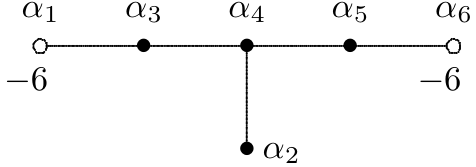} 
\caption{Marked Satake diagram for 
${\bf E}_6/{\bf F}_4$ }
\label{diageiv.fig}
\end{figure}

We have $\frc_0 = 0$ by Lemma
\ref{rankdim.lem} since $|\Supp \mu_i| = 1$ for $i = 1,2$. 
Hence from the Satake diagram we obtain $\frm = \fso_8$ and 
$  h_{\frm}^{0} = 6\alpha_3 + 10\alpha_4 + 6\alpha_2 + 6\alpha_5$,
which gives the indicated markings. Since $m_{\xi} = k_{\xi} + 2$ for both
simple restricted roots $\xi$, condition (4) of Proposition \ref{regsimple.prop} is
satisfied.

%

\vspace{2ex}
\noindent
\textbf{\em Case 10.} {\bf Type E\,VI.}
Let $G$ be the complex exceptional group of type ${\bf E}_7$ and let $K$ be
the connected subgroup with Lie algebra $\fso_{12}(\bC) \oplus \fsl_{2}(\bC)$.
The fundamental $K$-spherical highest weights are $\mu_1 = 2\varpi_{1}$, $
\mu_2 = 2\varpi_{3}$, $ \mu_3 = \varpi_{4}$, and $ \mu_4 = \varpi_{6}$.
The simple restricted  root data are as follows.

\vspace{1ex}

\begin{center}

\begin{tabular}{|c |c| c |}
\hline
 restricted root   & multiplicity & \# basic roots
 \\
\hline
 \rule[-1ex]{0ex}{3ex} $ \lambda_{1}$\,, \ $\lambda_{3}$  &  $1$ & $1$
 \\
\hline
 \rule[-1ex]{0ex}{3ex} $ \lambda_{4}$\,,  \ $\lambda_{6}$  &  $4$ &  $1$
 \\
\hline
\end{tabular}

\end{center}

\vspace{1ex}

\begin{figure}[h]
\includegraphics{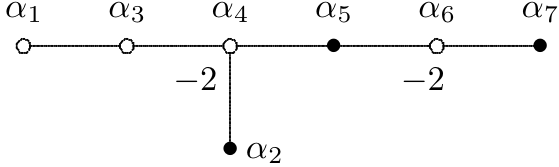} 
\caption{Marked Satake diagram for 
${\bf E}_7/\SO_{12}^{'}\times \SL_2$ }
\label{diagevi.fig}
\end{figure}

We have $\frc_0 = 0$ by Lemma \ref{rankdim.lem} since $|\Supp \mu_i| = 1$ for
$i = 1,\ldots, 4$. Hence from the Satake diagram we obtain $\frm \cong
\fsl_{2} \oplus \fsl_{2} \oplus \fsl_2$ and
 $\frh_{\frm}^{0} = \alpha_{2} +  \alpha_{5} + \alpha_{7}$,
which gives the indicated markings. Condition (4) of Proposition
\ref{regsimple.prop} is satisfied by $\lambda_4$ and $\lambda_6$, while
condition (1) is satisfied by $\lambda_1$ and $\lambda_3$.


%

\vspace{2ex}
\noindent
\textbf{\em Case 11.} {\bf Type E\,VII.}
Let $G$ be the complex exceptional group of type ${\bf E}_7$ and 
$K = {\bf E}_{6} \times \SO_{2}$.
The fundamental $K$-spherical highest weights are
$\mu_1 = \varpi_{1}$, $ \mu_2 = \varpi_{6}$, and $ \mu_3 = 2\varpi_{7}$.
The simple restricted  root data are as follows.

\vspace{1ex}

\begin{center}

\begin{tabular}{|c |c| c |}
\hline
 restricted root   & multiplicity & \# basic roots
 \\
\hline
 \rule[-1ex]{0ex}{3ex} $ \lambda_{1}$\,, \ $\lambda_{6}$  &  $8$ & $1$
 \\
\hline
 \rule[-1ex]{0ex}{3ex} $ \lambda_{7}$   &  $1$ & $1$
 \\
\hline
\end{tabular}

\end{center}

\begin{figure}[h]
\includegraphics{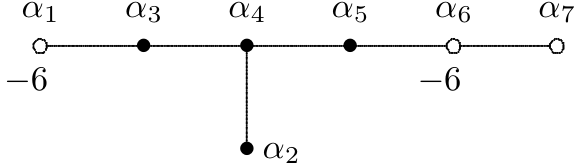} 
\caption{Marked Satake diagram for 
${\bf E}_7/{\bf E}_{6}\times \SO_2$ }
\label{diagevii.fig}
\end{figure}

We have $\frc_0 = 0$ by Lemma \ref{rankdim.lem} since $|\Supp \mu_i| = 1$ for
$i = 1, 2, 3$. Hence
from the Satake diagram we obtain $\frm = \fso_8$ and 
$  h_{\frm}^{0} = 6\alpha_3 + 10\alpha_4 + 6\alpha_2 + 6\alpha_5$,
which gives the indicated markings. Condition (4) of Proposition
\ref{regsimple.prop} is satisfied by $\lambda_1$ and $\lambda_6$, while
condition (1) is satisfied by $\lambda_7$.

%

\vspace{2ex}
\noindent
\textbf{\em Case 12.} {\bf Type E\,IX.}
Let $G$ be the complex exceptional group of type ${\bf E}_8$ and 
$K = {\bf E}_{7} \times \SL_{2}$.
The fundamental $K$-spherical highest weights are
$\mu_1 = \varpi_{1}$, $ \mu_2 = \varpi_{6}$, $ \mu_3 = 2\varpi_{7}$,
and  $ \mu_4 = 2\varpi_{8}$.
The simple restricted  root data are as follows.

\vspace{1ex}

\begin{center}

\begin{tabular}{|c |c| c |}
\hline
 restricted root   & multiplicity & \# basic roots
 \\
\hline
 \rule[-1ex]{0ex}{3ex} $ \lambda_{1}$\,, \ $\lambda_{6}$  &  $8$ & $1$
 \\
\hline
 \rule[-1ex]{0ex}{3ex}  $ \lambda_{7}$\,, $ \lambda_{8}$   &  $1$ & $1$
 \\
\hline
\end{tabular}

\end{center}

\vspace{1ex}

\begin{figure}[h]
\includegraphics{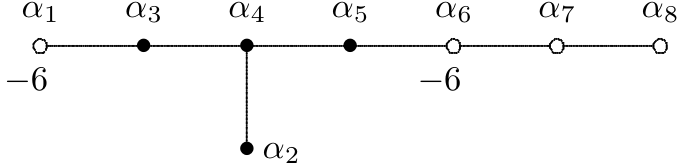} 
\caption{Marked Satake diagram for 
${\bf E}_8/{\bf E}_{7}\times \SL_2$ }
\label{diageix.fig}
\end{figure}

We have $\frc_0 = 0$ by Lemma \ref{rankdim.lem} since $|\Supp \mu_i| = 1$ for
$i = 1, \ldots, 4$. Hence
from the Satake diagram  $\frm = \fso_8$ and
$  h_{\frm}^{0} = 6\alpha_3 + 10\alpha_4 + 6\alpha_2 + 6\alpha_5$,
which gives the indicated markings. Condition (4) of Proposition
\ref{regsimple.prop} is satisfied by $\lambda_1$ and $\lambda_6$, while
condition (1) is satisfied by $\lambda_7$ and $\lambda_8$.



\end{document}